\theoremstyle{plain}
\newtheorem{thm}{Theorem}[section]
\newtheorem{prop}[thm]{Proposition}
\newtheorem{lem}[thm]{Lemma}
\newtheorem{cor}[thm]{Corollary}
\theoremstyle{definition}
\newtheorem{defn}[thm]{Definition}
\newtheorem{notn}[thm]{Notation}
\theoremstyle{remark}
\newtheorem{rmk}[thm]{Remark}
\newcommand*\Real{\mathbb{R}}
\DeclarePairedDelimiterX\set[2]\lbrace\rbrace{#1\;\delimsize\vert\;#2}
\newcommand*\eqdef{\overset{\mbox{\tiny{def}}}{=}}
\DeclarePairedDelimiterXPP\www@innerprod[3]{}\langle\rangle{_{#1}}{#2,#3}
\newcommand*{\innerprod}[3][]{\www@innerprod*{#1}{#2}{#3}}
\newcommand*\ub[1]{\smash{\underline{#1}}}
\newcommand*\supp{supp}
\newcommand*{\dvol}{\mathrm{dvol}}
\newcommand*{\D}{\mathrm{d}}
\newcommand*\bgf{\Upsilon}   
\newcommand*\mink{\mathfrak{m}} 
\newcommand*\secff{\mathit{II}} 
\newcommand*\gmetr{g} 
\newcommand*\gmetrf{\accentset{\circ}{g}} 
\newcommand*\confmetr{\tilde{g}} 
\newcommand*\sigmmetr{\eta} 
\newcommand*\siggmetr{h} 
\newcommand*\seten{\mathcal{Q}} 
\newcommand*\sweight{\mathfrak{W}} 
\newcommand*\algcomm{\mathfrak{A}} 
\newcommand*\algdiff{\mathfrak{B}} 
\newcommand*\lieD{\mathscr{L}} 
\newcommand*\pwWf{\mathscr{P}} 
\newcommand*\swtf{\mathscr{W}} 
\newcommand*\smtf{\mathscr{G}}  
\newcommand*\algdo{\mathscr{B}} 
\newcommand*\Energy{\mathcal{E}}  
\newcommand{\vast}{\bBigg@{4}}
\newcommand{\Vast}{\bBigg@{5}}
\numberwithin{equation}{section}
\begin{document}
\title{Global nearly-plane-symmetric solutions to the membrane equation}
\author{Leonardo Abbrescia}
\address{Department of Mathematics, Michigan State University, East Lansing, Michigan, USA}
\email{leonardo@math.msu.edu}
\thanks{L Abbrescia is supported by an NSF Graduate Research Fellowship (DGE-1424871).}
\author{Willie Wai Yeung Wong}
\address{Department of Mathematics, Michigan State University, East Lansing, Michigan, USA}
\email{wongwwy@math.msu.edu}
\thanks{WWY Wong is supported by a Collaboration Grant from the Simons Foundation, \#585199.}
\subjclass[2010]{35L72, 35B35; 35C07, 53A10}

\begin{abstract}
	We prove that \emph{any} simple planar travelling wave solution to the membrane equation in spatial dimension $d \geq 3$ with bounded spatial extent is globally nonlinearly stable under sufficiently small compactly-supported perturbations, where the smallness depends on the size of the support of the perturbation as well as on the initial travelling wave profile. The main novelty of the argument is the lack of higher-order peeling in our vector-field based method. In particular, the higher order energies (in fact, all energies at order $2$ or higher) are allowed to grow polynomially (but in a controlled way) in time. This is in contrast with classical global stability arguments where only the ``top'' order energies used in the bootstrap argument exhibit growth, and reflects the fact that the background travelling wave solution has ``infinite energy'' and the coefficients of the perturbation equation are not asymptotically Lorentz invariant. Nonetheless, we can prove that the perturbation converges to zero in $C^2$ by carefully analyzing the nonlinear interactions and exposing a certain ``vestigial'' null structure in the equations. 
\end{abstract}

\maketitle

\tableofcontents

\section{Introduction}

\subsection{Membrane equation}
The starting point of our discussion is the equation
\begin{equation}\label{eq:membrane}
	\frac{\partial}{\partial x^\mu} \left( \frac{\mink^{\mu\nu} \partial_\nu \phi}{\sqrt{1 + \mink(\nabla\phi,\nabla\phi)}} \right) = 0
\end{equation}
on $\Real^{1,d}$, the $(1+d)$ dimensional Minkowski space equipped with the metric $\mink$ which in standard coordinates is given by the diagonal matrix $\mathrm{diag}(-1,1,1,\cdots,1)$. In the equation we used the notation  $\mink(\nabla\phi,\nabla\psi) \eqdef \mink^{\mu\nu} \partial_\mu\phi \partial_\nu\psi$. 
This equation is variously known as the \emph{membrane equation}, the \emph{time-like minimal/maximal surface equation}, or the \emph{Lorentzian vanishing mean curvature flow}. 
This is due to the interpretation that the graph of $\phi$ in $\Real^{1,d} \times \Real \cong \Real^{1,d+1}$ is an embedded time-like hypersurface with zero mean curvature. 

Solutions to \eqref{eq:membrane} model extended test objects (world sheets), in the sense that the case where $d = 0$ reduces to the geodesic equation which models the motion of a test particle. 
(The membrane equation can also be formulated with codimension greater than one; see \cite{AlAnIs2006, Milbre2008}.) 
The membranes can also interact with external forces which manifests as a prescription of the mean curvature; see \cite{AurChr1979, Hoppe2013, Kibble1976, VilShe1994} for some discussion of the physics surrounding such objects, and see \cite{Jerrar2011, Neu1990} for rigorous justifications that membranes represent extended particles. 

Our interest in the membrane equation arose, however, mainly due to it being an exceptional model of a quasilinear wave equation that is highly \emph{non}-resonant.
The exploration of resonant conditions in wave equations proceeded, historically, through two fronts. 
In the case of 1 spatial dimension, it has long been understood that hyperbolic systems with resonance (Lax's ``genuinely nonlinear condition'') lead to shock formation in finite time \cite{Lax1964, Lax1973, John1974}.
For higher spatial dimensions, in the small-data regime, resonance has to compete with the dispersive decay enjoyed by wave equations. 
By now it is well understood that quasilinear wave equations enjoy small-data global existence in dimension $d \geq 4$, and also in dimensions $d = 2,3$ when versions of Klainerman's null condition are satisfied \cite{Klaine1980, Klaine1982, Klaine1984a, Alinha2001, Alinha2001a}. 
More recently the two fronts have met, where small-data shock formation for resonant quasilinear wave equations have been studied in spatial dimensions 2 and 3 \cite{Alinha2001, Alinha2001a, Christ2007a, MR3561670, MR3858399}.
For a recent review of the current understanding of small-data global existence versus shock formation in quasilinear waves, please see \cite{HoKlSW2016}.

In a recent paper, the second author, together with Speck, Holzegel, and Luk, studied the stability of plane-symmetric shock formation for quasilinear wave equations with resonance, under initial data perturbations that breaks the plane-symmetry \cite{SpHoLW2016}. 
More precisely, we start with a background simple-plane-symmetric solution to a quasilinear wave equation that is genuinely nonlinear, such that it forms a shock singularity in finite time. Such background solutions can be extracted from, for example, the late-time evolution of any small compactly supported initial data; we however allow our background solution to be of arbitrary ``size''. 
We were able to show that the shock formation is stable under arbitrary initial data perturbations that breaks the simple-plane-symmetry, provided that the perturbation is small compared to the background solution. 

A natural follow-up question is: \emph{when genuine nonlinearity fails, in particular when there exists simple-plane-symmetric global solutions to the quasilinear wave equation, is the global existence stable under small, symmetry-breaking initial data perturbations?}

Returning to the membrane equation, we note that the equation is highly non-resonant. It satisfies a stronger null condition than is typical of quasilinear waves in 2 or 3 dimensions. 
This was explicitly exploited to show global well-posedness of the small-data problem first by Brendle \cite{Brendl2002} when $d = 3$ and then by Lindblad \cite{Lindbl2004} in $d = 2$ \emph{and} $d = 1$. 
The $d = 1$ case is surprising as, there being no dispersive decay for the one-dimensional wave, any resonance, even arbitrarily high order, can lead to finite-time blow-up. 
The second author explored this case in more detail geometrically \cite{Wong2017} and enlarged the class of initial data for which global existence holds. 

Our focus on the membrane equation in this paper then is due to the fact that (i) as a consequence of \cite{Lindbl2004} and \cite{Wong2017}, there exists robust families of global plane-symmetric solutions to the membrane equation, and (ii) the null geometry of such solutions are well understood by the analyses of \cite{Wong2017}.
We remark that, while not explicitly stated, following the same method of proof of the main theorem in \cite{Wong2017}, one can show that the global simple plane-wave solutions described below in Section \ref{sect:pwbkgd} are automatically stable under plane-symmetric perturbations that are \emph{not necessarily simple}. 
In a future work the authors intend to generalize the results of this paper to more general models of quasilinear wave equations with strong null conditions. 

We state and prove our main result in dimension $d = 3$; as described in the previous paragraph, the result is effectively known in $d = 1$. Our proof also works in all dimensions $d \geq 3$ thanks to the improved dispersive decay of solutions to the linear wave equation in higher spatial dimensions. 
Our proof however doesn't work in $d = 2$ due to certain technical losses of decay (see Remark \ref{rmk:semi:two} below). 
In \cite{LiuZhou2019p} the authors were able to prove a similar result in $d = 2$ with weaker asymptotic control; see Remark \ref{rmk:dimen} for further discussion.

One should note, at this juncture, that the non-resonance of the membrane equation is only effective at preventing a certain type of singularity formation. 
Indeed, far away from the nearly-simple-plane-wave regime that we consider in the present manuscript, singularities are known to arise from regular initial data. 
In the case where $d = 1$ these were analyzed by Nguyen and Tian \cite{NguTia2013} and Jerrard, Novaga, and Orlandi \cite{JeNoOr2015}; while their analyses concentrate on the case with spatially periodic domain, by finite speed of propagation the same singularity formation can be localized and placed in our context. Analogues of \cite{NguTia2013,JeNoOr2015} in higher spatial dimensional backgrounds were studied by the second author \cite{Wong2018Axi}. 
In these cases the singularities are \emph{not} of shock-type, but rather appear due to the degeneration of the principal symbol of the evolution. 

\subsection{Our main result and discussions}
The answer to the question asked in the previous section is in the affirmative: we show that simple-plane-wave solutions to the membrane equation are stable under small initial data perturbations. 
The precise version of our main theorem is Theorem \ref{thm:mainthmQ}; there we state the result as a small-data global existence result for the corresponding perturbation equations, after a nonlinear change of independent variables that corresponds to a gauge choice. 
Here we state a slightly less precise version in terms of the original variables. 

\begin{thm}\label{thm:imprecise}
	Fix the dimension $d = 3$. Let $\bgf$ denote a smooth simple-plane-symmetric solution to \eqref{eq:membrane} with finite extent in its direction of travel. Fix a bounded set $\Omega\subset \Real^3$. There exists some $\epsilon_0 > 0$ depending on the background $\bgf$ and the domain $\Omega$, such that for any $(\psi_0, \psi_1)\in (H^{5}(\Real^3)\cap C^\infty_0(\Omega)) \times (H^4(\Real^3)\cap C^\infty_0(\Omega))$ with $\|(\psi_0, \psi_1)\| < \epsilon_0$, the initial value problem to \eqref{eq:membrane} with initial data 
	\[ \phi(0,x) = \bgf(0,x) + \psi_0(x), \qquad \partial_{t}\phi(0,x) = \partial_{t}\bgf(0,x) + \psi_1(x) \]
	has a global solution that converges in $C^2(\Real^3)$ to $\bgf$ as $t\to \pm\infty$. 
\end{thm}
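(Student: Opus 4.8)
\emph{Proof strategy.} I would deduce Theorem~\ref{thm:imprecise} from the small-data global existence statement for the perturbation system stated as Theorem~\ref{thm:mainthmQ}, so the real work is to set up and solve that reduced problem. Write $\phi = \bgf + \psi$. Since \eqref{eq:membrane} is equivalent to $\gmetr^{\mu\nu}[\nabla\phi]\,\partial_\mu\partial_\nu\phi = 0$ with $\gmetr^{\mu\nu}[\nabla\phi] = \mink^{\mu\nu} - (1+\mink(\nabla\phi,\nabla\phi))^{-1}\mink^{\mu\alpha}\partial_\alpha\phi\,\mink^{\nu\beta}\partial_\beta\phi$, the perturbation solves $\gmetr^{\mu\nu}[\nabla\phi]\,\partial_\mu\partial_\nu\psi = \mathcal{F}[\bgf,\psi]$. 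Two structural facts drive everything. First, for a simple plane travelling wave $\bgf$, whose wavefronts are null hyperplanes parametrized by a $\mink$-null coordinate $u$ with $\partial\bgf$ supported in a bounded range of $u$, the background metric is the ``sandwich'' pp-wave $\gmetr[\nabla\bgf] = \mink + (\bgf')^2\,\D u \otimes \D u$: it equals $\mink$ outside a slab $\{|u|\lesssim 1\}$ but is genuinely non-flat, and non-asymptotically-$\mink$, inside it, its curvature persisting along the wavefronts for all time — the ``infinite energy'' obstruction. A $u$-dependent shear of the conjugate null coordinate — the nonlinear change of independent variables underlying Theorem~\ref{thm:mainthmQ} — straightens this, turning $\gmetr[\nabla\bgf]$ into the flat metric on a sheared copy of $\Real^{1,3}$; since the shear's Jacobian depends only on $\bgf$ and the data is compactly supported, $\|(\psi_0,\psi_1)\| < \epsilon_0(\bgf,\Omega)$ becomes smallness for the reduced system. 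Second, $\mathcal{F}$ contains \emph{no term linear in $\psi$}: the first variation of $\gmetr^{\mu\nu}[\nabla\phi]$, contracted against $\partial^2\bgf\propto \D u\otimes \D u$, vanishes because $\D u$ is $\mink$-null, so the linearization of the whole operator at $\bgf$ is exactly $\gmetr^{\mu\nu}[\nabla\bgf]\partial_\mu\partial_\nu\psi$. Thus $\mathcal{F}$ is quadratic-and-higher, and inspection shows every dangerous term carries either a ``good'' derivative along the wavefront generator $\ell$ (the $\mink$-null vector dual to $\D u$) or the weight $\bgf''$ supported in $\{|u|\lesssim 1\}$; moreover, on the part of the domain of influence of the data that meets this slab, $\ell$ is an outgoing (fast-decaying) null direction. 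This is the ``vestigial null structure''.

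\emph{Energy bootstrap with graded growth.} Because $\bgf$ breaks the boosts in the travel direction and the rotations mixing it with the transverse plane, the only commutation fields whose action on $\gmetr[\nabla\bgf]$ does not produce a factor of $t$ against $\bgf''$ are the translations $\partial_0,\dots,\partial_3$, the transverse rotation, and the scaling $S$. I would run a bootstrap on weighted energies $\Energy_k[\psi](t)$, $k\le N$ (with $N$ dictated by the $H^5\times H^4$ regularity), built from a $\partial_t$-multiplier together with a Morawetz multiplier, with the structural novelty that $\Energy_0,\Energy_1$ stay $O(\epsilon^2)$ while for $k\ge 2$ one only propagates $\Energy_k\lesssim \epsilon^2\langle t\rangle^{\delta_k}$ for a strictly increasing sequence of small exponents $\delta_k$. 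The error terms split into: (a) quasilinear self-interaction errors, which away from the slab are asymptotically Minkowskian and are absorbed via the membrane null condition exactly as in the works of Brendle and Lindblad; and (b) slab-supported terms — the forcing $\mathcal{F}$ and the commutators $[\Box_{\gmetr[\nabla\bgf]},Z^I]\psi$ — which are $O(1)$ in amplitude but live where $|u|\lesssim 1$. The decisive point is that the intersection of this slab with the compact domain of influence of the data has transverse extent only $O(\sqrt{t})$, so the $3$-volume of the relevant region is $O(t)$; combining this with the good-derivative gain and the pointwise decay of $\psi$ makes the contribution to $\tfrac{d}{dt}\Energy_k$ integrable in time — but only after conceding the $\langle t\rangle^{\delta_k}$ growth, which is fed one step at a time by the pointwise bounds coming from $\Energy_{k+2}$, and one checks this recursion for the $\delta_k$ closes.

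\emph{Decay, closure, convergence.} From the $\Energy_k$ bounds I would obtain pointwise estimates through a Klainerman--Sobolev inequality built only from the translations, the transverse rotation, and $S$: weaker than the full one, but in $\Real^{1,3}$ still yielding $|Z^I\psi|\lesssim \epsilon\,\langle t+|x|\rangle^{-1}\langle t\rangle^{\delta}$ in the relevant range, with the good derivative $\ell\psi$ picking up the additional decay needed to close the slab estimates. Feeding these back improves the bootstrap constants; together with local well-posedness for the reduced system and a standard continuation argument, this produces a global solution of the reduced, hence the original, problem. Undoing the gauge change, $\psi = \phi - \bgf$ and its first two derivatives are bounded by $\epsilon\langle t\rangle^{-1+\delta}\to 0$, which is the asserted $C^2(\Real^3)$ convergence; the limit $t\to-\infty$ follows from the time-reflection symmetry of \eqref{eq:membrane}.

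\emph{Main obstacle.} The hard part is the energy step: one must keep the growth of the higher-order energies strictly ``triangular'', so that the hierarchy does not amplify into exponential growth, and one must verify that the only terms lacking a genuine null gain are precisely those trapped in the $O(\sqrt{t})$-wide slab — which is exactly what the vestigial null structure isolated in the first step guarantees.
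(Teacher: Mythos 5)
Your reduction to the perturbation system, and the structural features you isolate — the flatness of the induced metric on the simple plane wave, the vanishing of the linearized potential (because $\secff:\secff=0$), the null structure carried by $\bgf''$ being supported in a bounded range of $u$, and the slab geometry intersecting the domain of influence — are accurate and match the paper. But the execution you propose would not close, and the central deviation is the commutator selection. You propose commuting only with translations, the transverse rotation, and scaling, precisely so that hitting $\bgf''$ never produces a growing factor. That set of fields is too small to recover the pointwise decay $\langle t+|x|\rangle^{-1}$ you quote in $d=3$; without the Lorentz boosts you cannot close a Klainerman--Sobolev estimate of that strength. The paper takes the opposite route: it commutes with \emph{all} the boosts $L^i$, accepts that $L^i\bgf''$ grows like $\sqrt{\ub{u}}$ for $i\neq1$ (Lemma~\ref{lem:fderest}), and compensates with a hyperboloidal foliation on which the boosts are exactly the tangential fields, so the sharp global Sobolev inequality of Theorem~\ref{thm:globsob} is available using boost commutators alone. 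You also omit from your "good" list the boost $L^1$ in the direction of travel, which is in fact benign ($L^1\bgf'' = u\bgf'''$ is bounded and compactly supported); exploiting this is what lets the paper run a separate, better-behaved hierarchy $\mathfrak{F}_k$ for $L^1\phi$, without which the quasilinear term $\phi\,\bgf''\,\partial^2_{\ub{u}\ub{u}}\phi$ does not close in $d=3$.

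A second gap is the scale of the energy growth. You claim $\Energy_k \lesssim \epsilon^2\langle t\rangle^{\delta_k}$ with a strictly increasing sequence of \emph{small} exponents, and that the slab-supported sources become integrable after this concession. But each boost landing on $\bgf''$ costs a full power of $\sqrt{\ub{u}}\approx\tau$, so the actual bootstrap hierarchy \eqref{eq:ass:ba2} reads $\mathfrak{E}_k\lesssim\delta\,\tau^{k-2+\gamma}$ for $k\geq2$: the exponents climb by one per derivative, not stay small. What keeps this from amplifying is the vestigial null structure encoded in \eqref{eq:hiddennull}, $\pwWf_m\partial_{\ub{u}}=\swtf_1\pwWf_m(L^1+T)$, giving an extra decaying weight on the transversal $T$-factor, together with the anisotropy of the hyperboloidal energy (Remark~\ref{rmk:peeling}) and the observation $\ub{u}\gtrsim\tau^2$ on $\mathrm{supp}\,\bgf''$ (so $\cosh\rho\gtrsim\tau$ there). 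This is also why the paper deliberately does \emph{not} use a Morawetz multiplier — the $T$-multiplier on hyperboloids already captures the needed improvement in the tangential directions. Your $O(\sqrt{t})$-wide-slab volume count is the correct intuition, but it must be converted into the foregoing weighted-hyperboloidal estimates to actually deliver the crucial extra power of $\tau^{-1}$.
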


\begin{rmk}[Finite extent in the direction of travel]
	We ask that $\bgf$ essentially represent a travelling ``pulse''. For example, taking plane-symmetry to mean constant in the $x^2$ and $x^3$ variables, $\bgf$ would be a function of $t = x^0$ and $x^1$ alone. We ask that for any fixed $t$ the function $\bgf$ vanishes for all sufficiently large $x^1$. 

	While we make heavy use of this finite extent property in the course of the proof (see Lemma \ref{lem:fderest}), as we discuss in Remark \ref{rmk:cmpctdecay}, the finite-extent property can be replaced by quantitative decay rates on the background profile $\bgf$ up to $7$ derivatives. We omit this generalization as it makes the arguments more tedious and the main mechanisms less transparent. 
\end{rmk}

\begin{rmk}[Simplicity]
	By a simple plane-wave solution we refer to a solution that is not only constant in the $x^2$ and $x^3$ variables, but one such that the differential $\D\bgf$ is null with respect to the dynamic metric. In other words, a simple plane-wave solution is one that propagates along only one (and not both) of the characteristic directions of the nonlinear wave equation. 

	The assumption of simplicity is only to keep the argument simple (pun intended). In fact, assuming finite extent of the initial data for the plane-symmetric background, automatically by the sharp Huygen's principle for one dimensional waves, after a finite-length of time the background will decouple into two spatially disjoint simple plane-waves travelling in opposite directions. By Cauchy stability of the finite-time initial value problem, we see that the theorem for the simple plane-wave background also implies the theorem for general, globally existing plane-symmetric backgrounds such as those demonstrated to exist in \cite{Lindbl2004, Wong2017}.

	We note here, however, that another feature of simplicity is that simple-plane-wave solutions exist for arbitrary pulse profile (see Section \ref{sect:pwbkgd} below). The same is not the case for non-simple plane-wave solutions: large interacting waves can form finite-time singularities. 
\end{rmk}

\begin{rmk}[Dimensionality] \label{rmk:dimen}
	The theorem above is stated for $d = 3$. The same arguments can be used to prove stability for all dimensions $d \geq 3$ (in fact the arguments can be significantly further simplified when $d \geq 5$). 
	One needs to modify the degree of regularity required. When $d = 3$ the data is taken to be small in $H^k \times H^{k-1}$ with $k = 5$. When $d \geq 4$ is even we will need $k = d+3$, and when $d \geq 5$ is odd we will need $k = d+2$. 
	Compare to the discussion in Section \ref{sect:semilinear} below.

	As mentioned before in this introduction, the $d = 1$ analogue of the result essentially follows from the arguments in \cite{Wong2017}. This leaves the case $d = 2$. 
	After circulating our preprint, we were informed by Profs.\ Jianli Liu (Shanghai University of China) and Yi Zhou (Fudan University) of their work on the $d = 2$ case \cite{LiuZhou2019p}. They were very kind to share with us a draft of their manuscript, which adopted a somewhat different approach to the problem. Aside from minor technical differences in how we approach the energy and pointwise estimates, a difference appears in how we linearize around the background solution. In the present manuscript we used the geometric normal graphical gauge (see below) adapted to the background traveling wave, while in \cite{LiuZhou2019p} they used the gauge adapted to the trivial solution. Our gauge has the advantage that the perturbation equations contain no \emph{linear} potential from the background; the price paid being the appearance of nonlinear contributions of lower order whose null structure are less apparent. In \cite{LiuZhou2019p}, they were so far able to show global existence for the perturbation equations but only $C^0$ (and \emph{not} $C^1$) convergence to the background. The lack of higher-derivative convergence can be attributed, at least in part, to their gauge choice. Based on our own work we have high hopes that in fact $C^2$ convergence can be proven to hold, though at present there are some technical difficulties for even showing global existence using a direct extension of our method; see also Remark \ref{rmk:semi:two}. 
\end{rmk}

Our main theorem is not a straight-forward small-data global existence result for a quasilinear wave equation. 
The equations satisfied by the perturbations around \emph{large} solutions generally include coefficients contributed by background, the effects of which must also be captured.
In our problem, to leading order the perturbation equation looks like
\begin{equation}\label{eqn:modelprob:intro}
\Box_\mink \phi + \phi \bgf'' (\partial_t + \partial_{x^1})^2 \phi + \bgf'' (\partial_t \phi + \partial_{x^1} \phi)^2 = 0.
\end{equation}
Here $\Box_\mink$ is the flat wave operator, and the background pulse is assumed to be travelling in the $+x^1$ direction, so has compact support in the $(t-x^1)$ variable. 

The first thing to notice is that the linearized equation is the linear wave equation on Minkowski space. 
This is a special geometric feature of simple-travelling wave solutions to the membrane equation.
To expose this special linear structure, one needs to make an appropriate gauge choice involving a nonlinear change of variables adapted to the background $\bgf$, which essentially re-writes our perturbation equations as a graph in the normal bundle of $\bgf$, interpreted as a submanifold of $\Real^{1,1+d}$. 
It is well-known that the membrane equation has good structure in such ``normal graphical gauge'': in this formulation the linearized equation can be expressed as the geometric wave operator adapted to the induced Lorentzian metric on the background $\bgf$, plus possibly a potential term. 
This gauge was also used, for example, in \cite{DoKrSW2016}. 

In view of this special geometric feature, we do not need to develop special methods to perform the linear analysis. 
On the other hand, the function $\bgf''$ is \emph{non-decaying} and has support within the ``wave zone''; this significantly complicates the analysis of the nonlinear terms, especially since these nonlinearities are not in the shape of classical null forms. 
This is in contrast with the analyses in \cite{DoKrSW2016} where the stability of another ``large data'' solution to the membrane equation was considered. 
The background solution in that case is the static catenoid solution.
The nontrivial catenoid background introduced a low-frequency correction to the linear evolution (in fact giving an exponentially growing mode). 
But as the background is asymptotically flat, the high-frequency evolution, especially in the wave-zone where it is the most delicate when it comes to the nonlinear interactions, is entirely captured by classical null structures. 
In particular the nonlinearities do not introduce new difficulties beyond the adjustments made for the modified linear evolution. Another difference with our work and \cite{DoKrSW2016} is that they prove that the catenoid is globally stable under \emph{axially symmetric} codimension one initial perturbations, whereas we prove that our plane-wave solution is globally stable under an open set of \emph{symmetry breaking} perturbations. Their symmetry assumptions on the perturbations are there to avoid the issue of trapped geodesics on the catenoid.

In the present manuscript, on the other hand, the focus is entirely on the nonlinearity, with the main difficulty arising precisely from the non-decaying background $\bgf''$. 
At this point it may be worth drawing comparison to another large-data (semi-)global existence result for the membrane equation. 
In \cite{Jinhua}, the authors studied the membrane equation with initial data given as a small perturbation of an out-going ``short-pulse''. 
The (semi-)global existence (note that by their choice of initial data, the result in \cite{Jinhua} is not time-symmetric!) mechanism in this case is essentially still the classical null condition of Klainerman. 
The strong non-resonance condition of the membrane equation means that the ``large'' short-pulse background does not interact with itself; and in fact the pulse itself \emph{decays} like the solution to the linear wave equation. 
Putting this together with the fact that the nonlinearities in \eqref{eq:membrane} are cubic, this means that heuristically we can understand the result of \cite{Jinhua} as very similar to the large data stability result for the wave maps system proven in \cite{Sideri1989}, which also required the ``background geodesic solution'' to be one with finite (weighted) energy, and hence decays like finite energy solutions to the linear wave equation. 
These types of systems can be modeled by the quasilinear system
\begin{gather*}
	\Box_\mink \psi_1 = 0, \\
	\Box_\mink \psi_2 = \mink(\nabla\psi_1, \nabla\mink(\nabla\psi_2, \nabla\psi_2)) + \mink(\nabla\psi_2, \nabla\mink(\nabla\psi_1, \nabla\psi_2)).
\end{gather*}
Even when $\psi_1$ is a ``large'' solution, it contributes enough decay that the nonlinearities for the second equation decay at an integrable rate. Together with the fact that the nonlinearity is quadratic in $\psi_2$, we can upgrade the smallness and close the bootstrap. Note that the decay of $\psi_1$ is crucial, as, in the second term of the nonlinearity we see components like
\[   (\partial_t + \partial_r)^2\psi_1 \cdot (\partial_t \psi_2 - \partial_r \psi_2)^2.\]
This is a resonant interaction in $\psi_2$, whose contribution is significantly ameliorated by the fact that $(\partial_t + \partial_r)^2\psi_1$ should decay like $t^{-3/2}$ (or better) in $\Real^{1,3}$ or $t^{-1}$ (or better) in $\Real^{1,2}$. 
If we were to replace the $\psi_1$ factor by a generic bounded function in $\Real^{1,3}$ (or a function decaying no faster than $1/\sqrt{t}$ in $\Real^{1,2}$) this term will lead to finite-time blow-up. 

Returning to our equation \eqref{eqn:modelprob:intro}, we see that we have precisely this type of resonant interaction with a non-decaying coefficient. 
Instead of coefficient decay, we need to exploit a different aspect of the null structure of the original membrane equation \eqref{eq:membrane}. 
What we will use is the fact that $\bgf''$ has compact support in the $(t - x^1)$ variable. 
The resonant interacting terms $(\partial_t\phi + \partial_{x^1}\phi)$ represent waves traveling in directions transverse\footnote{In $(1+1)$-dimensions, the linear wave equation can be expressed as $(\partial_t - \partial_{x^1})(\partial_t\varphi + \partial_{x^1}\varphi) = 0$, and since $(\partial_t - \partial_{x^1})(t-x^1) = 2$, one sees that $\partial_t \varphi + \partial_{x^1}\varphi$ is a bonafide traveling wave transverse to  the level sets of $t-x^1$.} to the level sets of $t-x^1$. In particular, we expect that the resonant interaction to only take place for a bounded length of time (for each wave packet). 
Our main mechanism would therefore be something similar to that which drives Shatah's space-time resonance arguments \cite{Shatah2010}, but captured in a purely physical space manner. 

Of course, we have to pay a price for this non-decay. 
This manifests in us having to use a polynomially-growing energy hierarchy when using the vector field method.
In fact, our higher order energies, starting with the second (controlling the third derivatives in $L^2$), will grow in time, with each additional order differentiation growing one order faster in time.
One should compare to classical applications of the vector field method where all but the top-order energies are bounded in time, with the top-order typically exhibiting no worse than a log growth. 
The upshot of this energy hierarchy is that we lose strong peeling properties of the solutions. (See Remark \ref{rmk:lostofpeeling}.)

To effectively study this energy hierarchy, it turns out to be convenient to use \emph{hyperboloidal foliations}.
Such foliations were introduced by Klainerman \cite{Klaine1985a} to study the decay properties of Klein-Gordon equations, and further developed and refined in \cite{LeFloch} for use also with wave equations and coupled systems of Klein-Gordon and wave equations.
We will follow the formulation in \cite{Wong2017p} which places emphasis on the use of Lorentz boost commutators; in fact, one interesting technical facet of our argument is that, throughout, we will only commute our equations with Lorentz boost vector fields. 
To help manage the nonlinearities that arise in such arguments in a systematic way, we introduce in this paper a weighted vector field algebra (see Section \ref{sect:wvfalg}). 
The introduced notations help simplify the computation vastly for the higher-order nonlinearities and significantly shorten our arguments. 


\subsection{Outline of the paper}

The remainder of this paper is organized as follows: we first discuss the background plane-wave solutions $\bgf$. 
These solutions are introduced in Section \ref{sect:pwbkgd}. Their basic geometric properties and our gauge choice for studying the perturbations are described in Section \ref{sect:perturb}. 

We next discuss the basic analytic tools used in our arguments; in Section \ref{sect:globsob} we recall the global Sobolev inequalities associated to hyperboloidal foliations, in Section \ref{sect:wvfalg} we develop a weighted vector field algebra to help simplify our analyses of the nonlinear terms using more schematic notations. 

In Section \ref{sect:semilinear} we study the semilinear model problem $\Box_\mink \phi = \bgf'' (\partial_{\ub{u}}\phi)^2$, obtained from dropping the quasilinearity from \eqref{eqn:modelprob:intro}.
This model problem turns out to capture already the majority of the difficulty one faces when analyzing the full problem. 
We prove small-data global wellposedness for the semilinear model in all dimensions $\geq 3$. 
There are certain additional technical difficulties for studying the quasilinear model \eqref{eqn:modelprob:intro} in $d = 2$ due to the fact one expects even the \emph{first order energy} exhibits polynomial growth there, and the loss seems too strong to overcome with the methods described in this paper; therefore we also omit a detailed treatment of the $d=2$ semilinear model.

The remainder of the paper is devoted to studying the quasilinear problem in $d = 3$, and stating and proving a more precise version of 
Theorem \ref{thm:imprecise}. 
In Section \ref{sect:commutedeq} we perform first some preliminary computations casting the equations for the perturbation $\phi$ and its higher order derivatives in schematic form to prepare for analysis. As many of the computations are long and involved, we delegate sketches of the arguments separately to the Appendix. 
At the end of the section we state our Main Theorem \ref{thm:mainthmQ}.
As usual, we will prove our Main Theorem by a bootstrap argument for our energy hierarchy. 
In Section \ref{sect:energyBA} we define our energy quantities, outline our main energy estimate, state our bootstrap assumptions, and derive some immediate consequences that do not involve the equations of motion. 
Section \ref{sect:inhom} is devoted to proving \textit{a priori} estimates for our equations of motion, based on the bootstrap assumptions. 
These are combined in Section \ref{sect:closing} to show that the bootstrap assumptions can be improved, and thereby hold for all time and global existence follows. 

For convenience we include in the Appendix a list of notations that are introduced and references to their definitions.

\section{The background solution}

In this section we first exhibit simple plane-wave solutions to the membrane equation, and describe their geometry.
These solutions are traveling waves and exist for all time; our goal is to analyze their stability under small non-plane-symmetric perturbations. 
To do so we recast the stability problem as a small-data Cauchy problem for the perturbation. 
In the second part of this section we exploit the geometric interpretation of the solutions as minimal submanifolds of higher dimensional Minkowski space to make a convenient choice of gauge, and derive the corresponding perturbation equations.
The gauge choice allows us to simplify the analysis of the linearized dynamics. 
As the membrane equation itself is a quasilinear wave equation, when linearizing around a fixed nontrivial background solution, typically the background contributes to the linearized dynamics (e.g.\ in \cite{DoKrSW2016} where the background contributes a potential term leading to generic instability of the system). 
For the membrane equation in Minkowski space, however, it is known \cite{Choque1976} that the potential term in the linearized dynamics for perturbations parametrized by the normal bundle is given by the double contraction of the extrinsic curvature of the embedding of the background solution. 
For simple plane-waves, this potential term vanishes \cite{Wong2017}. 
The gauge choice below makes this explicit and shows that the perturbed system can be described by a \emph{quasilinear perturbation} of the linear wave equation on Minkowski space, with the background solution only appearing as \emph{coefficients of the nonlinearity}. 

\subsection{Simple plane-wave solutions to the membrane equation}\label{sect:pwbkgd}
Let $\bgf\in C^\infty(\Real;\Real)$ be arbitrary. One easily sees that the function $\mathring \phi : \Real^{1+d} \to \Real$ defined by\footnote{We've made the choice to have our background travelling waves move ``to the left'', i.e.\ as a function of $t+x^1$. Note that for the analyses in \cite{SpHoLW2016} the simple waves move ``to the right''. We beg those readers familiar with the previous work to indulge us and mentally reorient the space-time and relabel the function $u$ as needed.}
\begin{equation}\label{eq:pwbkgd}
	\mathring \phi(t, x^1, x^2, \ldots, x^d) = \bgf(t+x^1)
\end{equation}
solves \eqref{eq:membrane}, seeing as $\D\mathring\phi(t,x^1, \ldots) = \bgf'(t+x^1)~\D(t+x^1)$ and hence $\mink(\D\mathring\phi,\D\mathring\phi) \equiv 0$ and $\mink^{\mu\nu} \partial^2_{\mu\nu} \mathring\phi \equiv 0$. The simple plane-wave background will be interpreted as the graph of $\mathring \phi$ in $\Real^{1,d+1}$, the $(d+2)$-dimensional Minkowski space equipped with the standard Minkowski metric $M$. That is to say, we consider the embedding $\Real^{1+d} \hookrightarrow \Real^{1,d+1}$, given by 
\[ (t, x^1, \ldots, x^d) \mapsto (t, x^1, \ldots, x^d,\mathring \phi(t, x)) \]
with the first component fixed as the timelike one. 
The $\mink(\D\phi,\D\phi) \neq -1$ implies that the induced metric on the graph of $\phi$ is Lorentzian and non-degenerate. By the analysis of \cite{Wong2017} this induced metric is \emph{flat}; this fact can also be seen through the following explicit computations.

Denoting the above embedding by $\Phi$, the induced metric can be in fact given by the line element
\begin{multline*}  \Phi^* M = \D s^2 = (-1 + (\partial_t\mathring\phi)^2) ~\D t^2 + 2 \partial_t\mathring\phi \partial_{x^1}\mathring\phi ~\D t\, \D x^1 + (1 + (\partial_{x^1} \mathring\phi)^2) ~\D (x^1)^2 \\
	+ \D(x^2)^2 + \cdots + \D(x^d)^2.
\end{multline*}
Using that $\partial_t\phi(t,x) = \partial_{x^1} \phi(t,x) = \bgf'(t+x^1)$, we see that if we define 
\begin{equation}\label{eq:changevar}
	\vast\{ \quad \begin{aligned}
	u &\eqdef t+ x^1 , \\
	\ub{u} &\eqdef \frac12 \Big[ t- x^1  - \int_0^{x^1 + t} (\bgf')^2(\tau) ~\D \tau\Big];
\end{aligned}
\end{equation}
that the line element can be alternatively written as the Minkowski metric in standard double-null form
\begin{equation}\label{eq:minkDN}
\mink =  \D s^2 = - 2 ~\D u\, \D\ub{u} + \D(x^2)^2 + \cdots + \D(x^d)^2.
\end{equation}
The functions $u$ and $\ub{u}$ solve the eikonal equation $\mink(\nabla u, \nabla u) = \mink(\nabla \ub{u}, \nabla \ub{u}) = 0$. For the subsequent analyses we will parametrize using the coordinates $\{ u, \ub{u}, x^2, \ldots, x^d\}$.

\begin{rmk}\label{remark:twoMinkMetrics}
Note that there are two Minkowski metrics involved in the construction: (1) The metric on the ambient space $\Real^{1,d+1}$, which is denoted by $M$. (2) The induced Minkowski metric on the plane-wave background given by double-null coordinates $(u,\ub u, \hat x) \in \Real^{1,d}$, denoted by $\mink$.
\end{rmk}

For completeness, we note that the change of variables can be inverted:
\begin{equation}\label{eq:invchgvar}\Vast\{ \quad \begin{aligned}
	t & =  \frac12 u + \ub{u} + \frac12 \int_0^{u} (\bgf')^2(\tau) ~\D\tau,\\
	x^1 &= \frac12 u - \ub{u} - \frac12 \int_0^{u} (\bgf')^2(\tau) ~\D\tau.
\end{aligned}\end{equation}
For convenience, we note that relative to this coordinate system, our simple plane-wave solution is given by the embedding
\begin{equation}\label{eq:pwbkgd2}
	(u, \ub{u}, \hat{x}) \mapsto (t, x^1, \hat{x}, \bgf(u) ) \in \Real^{1, d + 1}
\end{equation}
where $t$ and $x^1$ are given as functions of $u, \ub{u}$ by \eqref{eq:invchgvar}, and for convenience we denote by $\hat{x} = (x^2, \ldots, x^d)$. 

We finish this subsection by computing the extrinsic curvature (second fundamental form) of the embedding \eqref{eq:pwbkgd2}. The change of variables \eqref{eq:invchgvar} implies that the vector fields
\[ \partial_{\ub{u}} = (1,-1,0, \ldots, 0), \qquad \partial_{u} = \big( \frac12 (1 + \bgf'(u)^2) , \frac12( 1 - \bgf'(u)^2), 0, \ldots, 0, \bgf'(u) \big).
\]
Denote by $n: \Real^{1,d} \to \Real^{1,d+1}$ the unit normal vector field (with respect to the Minkowski metric on $\Real^{1,d+1}$) of the embedding \eqref{eq:pwbkgd2} given by 
\begin{equation}\label{eq:unitnormal}
	n(u, \ub{u}, \hat{x}) = (-1)^{d-1}(-\bgf'(u),  \bgf'(u), 0, \ldots, 0, -1).
\end{equation}
The expression \eqref{eq:unitnormal} can be computed from 
\[n^{\alpha} = (M^{-1})^{\alpha\kappa} \epsilonup_{\kappa, \beta,\gamma,\sigma_2,\ldots,\sigma_{d-1}}  (\partial_u)^\beta (\partial_{\ub u})^\gamma(\partial_{x^2})^{\sigma_2} \cdots(\partial_{x^{d-1}})^{\sigma_{d-1}},\]
where $\alpha,\kappa,\beta,\gamma,\sigma_2,\ldots,\sigma_{d-1}\in\{0,1,\dots,d-1\}$ and $\epsilonup_{\kappa, \beta,\gamma,\sigma_2,\ldots,\sigma_{d+1}} $ is the anti-symmetric symbol normalized by $\epsilonup_{0,1,2,\dots,d-1} = 1$. The second fundamental form can then be computed to equal 
\begin{equation}\label{eq:secffbg}
	\secff = (-1)^{d-1} \bgf''(u) ~\D u^2.
\end{equation}
(We use the convention $\secff(\partial_u, \partial_u) =  \langle\partial_u n, \partial_u\rangle_M$.) Notice that $\secff$ is indeed trace-free with respect to the induced metric as a consequence of and additionally the double contraction $\secff:\secff$ with respect to the induced metric also vanishes, both a consequence of the eikonal equation.

\subsection{The gauge choice and the perturbed system}\label{sect:perturb}
Small perturbations of the embedding \eqref{eq:pwbkgd2} reside within a tubular neighborhood of the background. 
We parametrize the perturbations as a graph within the normal bundle, analogously to the analysis in \cite{DoKrSW2016}; that is, we look for embeddings of the form 
\begin{equation}\label{eq:pertemb} 
	(u, \ub{u}, \hat{x}) \mapsto (t, x^1, \hat{x}, \bgf(u) ) + \phi(u,\ub u, \hat x) \cdot n(u,\ub u, \hat x)
\end{equation}
where $\phi: \Real^{1,d} \to \Real$ is the \emph{height} of the graph, and $n$ is the unit normal as defined in \eqref{eq:unitnormal}. 
The induced metric for this perturbation will be denoted by $\gmetr$; it is given by the pull-back of the Minkowski metric $M$ on $\Real^{1,d+1}$ by the embedding \eqref{eq:pertemb}
\begin{equation}\label{eq:dynmet}
	\gmetr = \mink + \D\phi\otimes \D\phi - 2 \phi\bgf''~ \D u \otimes \D u.
\end{equation}
Its corresponding volume element can be computed to be
\[
	\dvol_\gmetr = \sqrt{|\gmetr|} ~\D{u} \D{\ub{u}} \D{\hat{x}}
\]
where
\begin{equation}
	|\gmetr| \eqdef 1 + \mink(\nabla\phi,\nabla\phi) + 2 \phi \bgf'' (\partial_{\ub{u}} \phi)^2.
\end{equation}
We note that $\gmetr$ is a perturbation of the Minkowski metric $\mink$ with terms both quadratic and linear in $\phi$. For later computations it is helpful to also record the perturbations truncated to the linear terms, which we will denote by $\gmetrf$
\begin{equation}\label{eq:dynmet1}
	\gmetrf \eqdef \mink - 2 \phi\bgf''~ \D u \otimes \D u.
\end{equation}
The inverses of $\gmetr$ and $\gmetrf$ can be computed explicitly in the double null (relative to $\mink$) coordinates $(u,\ub{u}, \hat{x})$. 
For $\gmetrf$ one finds
\begin{equation}\label{eq:InvMet1}
	\gmetrf^{-1} = \mink^{-1} + 2 \phi\bgf'' ~ \partial_{\ub{u}} \otimes \partial_{\ub{u}}.
\end{equation}
Note that this implies 
\begin{equation}
	|\gmetr| = 1 + \gmetrf^{-1}(\nabla\phi, \nabla\phi).
\end{equation}
Using that $\gmetr = \gmetrf + \D\phi \otimes \D\phi$, we can apply the Sherman-Morrison formula \cite{MR0035118} to obtain
\begin{equation}\label{eq:InvMet}
	\gmetr^{-1} = \gmetrf^{-1} - \frac{1}{|\gmetr|} \left(\gmetrf^{-1}\cdot \partial\phi\right)\otimes \left(\gmetrf^{-1}\cdot\partial\phi\right).
\end{equation}

\begin{notn}[Index raising and lowering]
	In the computations to follow, one frequently needs to lower or raise indices with respect to any of $\gmetr$ / $\gmetr^{-1}$, $\gmetrf$ / $\gmetrf^{-1}$, or $\mink$ / $\mink^{-1}$.  
	We will adopt the following conventions
	\begin{itemize}
		\item The unadorned musical operators $\flat$ / $\sharp$ are used for lowering and raising indices with respect to the Minkowski metric $\mink$ of the background simple-plane-wave solution. 
		\item Implicitly lowered / raised indices are always with the Minkowski metric $\mink$, so $\partial^j\phi$ refers to $\mink^{jk}\partial_k\phi$.
		\item When it is clear from the context, we will sometimes omit the index ${-1}$ denoting inverses for brevity. For example, we write $\mink(\nabla\phi,\nabla\phi)$ instead of $\mink^{-1}(\nabla\phi,\nabla\phi)$ since $\nabla\phi$ are naturally covariant and so we will need the contravariant metric $\mink^{-1}$. Similarly, if we write $\gmetr^{\mu\nu}\partial_\nu \phi$ it should be interpreted as $(g^{-1})^{\mu\nu}\partial_\nu \phi$. 
		\item Index manipulations with respect to the dynamical metrics $\gmetr$ and $\gmetrf$ will always be adorned. So for example we will write
			\[ \partial^{\gmetrf\sharp}\phi = \gmetrf^{-1}\cdot \partial\phi, \qquad \partial^{\gmetr\sharp}\phi = \gmetr^{-1} \cdot \partial \phi\]
			with corresponding index notation
			\[ \partial^{\gmetrf\sharp j} \phi = \gmetrf^{jk} \partial_k \phi, \qquad \partial^{\gmetr\sharp j} \phi = \gmetr^{jk} \partial_k\phi.\]
	\end{itemize}
\end{notn}

With the notation announced above, we can equivalently write
\[ \gmetr^{-1} = \gmetrf^{-1} - \frac{1}{|\gmetr|} \partial^{\gmetrf\sharp}\phi \otimes \partial^{\gmetrf\sharp}\phi.\]

For the embedding \eqref{eq:pertemb} to have vanishing mean curvature (i.e.\ satisfy the membrane equation), it must be a formal stationary point of the volume functional $\phi \mapsto \int \dvol_\gmetr$.
The perturbation equations satisfied by $\phi$ can be derived as the corresponding Euler-Lagrange equations, as shown below. 

Denoting by $\mathcal{L} = \sqrt{|\gmetr|} =  \sqrt{1 + \mink( \nabla\phi , \nabla\phi ) + 2 \phi \bgf''( \partial_{\ub{u}} \phi)^2}$ the Lagrangian density, the corresponding Euler-Lagrange equation is
\begin{equation}\label{eq:lagrange1} 
	\frac{\delta \mathcal{L}}{\delta \phi} =\frac{\partial}{\partial {u}}\left(  \frac{\delta \mathcal{L}}{\delta \phi_u} \right) + \frac{\partial}{\partial {\ub{u}} }\left( \frac{\delta \mathcal{L}}{\delta \phi_{\ub{u}}}\right) +\frac{\partial}{ \partial \hat{x} }\left( \frac{\delta \mathcal{L}}{\delta \phi_{\hat{x}}}\right)
\end{equation}
where we use the subscript on $\phi$ to denote partial differentiation. 
Expanding $\mink( \nabla\phi , \nabla\phi ) = - 2 \partial_u\phi \partial_{\ub{u}} \phi + (\partial_{\hat{x}} \phi)^2$ we compute 
\begin{align*}
	\frac{\delta \mathcal L}{\delta \phi} & = \mathcal{L}^{-1} \bgf'' (\phi_{\ub{u}})^2  & \frac{\delta\mathcal L}{\delta \phi_{\hat{x}}} & = \mathcal{L}^{-1} \phi_{\hat{x}} \\
	\frac{\delta\mathcal L}{\delta \phi_u} & = \mathcal{L}^{-1} (- \phi_{\ub{u}}) & \frac{\delta\mathcal L}{\delta \phi_{\ub{u}}} & = \mathcal{L}^{-1}(- \phi_u + 2 \bgf'' \phi\phi_{\ub{u}}).
\end{align*}
So the Euler-Lagrange equation reads
\begin{equation}\label{eq:lagrange2}
	\partial_{\mu} \left( \frac{\gmetrf^{\mu\nu} \partial_\nu \phi}{\mathcal{L}}\right) = \mathcal{L}^{-1} \bgf'' (\phi_{\ub{u}})^2.
\end{equation}
Observe that by \eqref{eq:InvMet} we have
\[ \partial^{\gmetr\sharp}\phi = \frac{1}{|\gmetr|} \partial^{\gmetrf\sharp}\phi .\]
This implies that we can rewrite \eqref{eq:lagrange2} as
\begin{equation}\label{eq:geomeqphi}
	\Box_\gmetr \phi = |\gmetr|^{-1}\bgf'' (\phi_{\ub{u}})^2;
\end{equation}
here $\Box_\gmetr$ refers to the Laplace-Beltrami operator of the metric $\gmetr$, given in local coordinates by 
\[ \Box_\gmetr f = \frac{1}{\sqrt{|\gmetr|}} \partial_{\mu}\big( \sqrt{|\gmetr|} \gmetr^{\mu\nu} \partial_\nu f\big).\]

As the metric $\gmetr$ depends on the first jet of the unknown $\phi$, the principal part of the \eqref{eq:geomeqphi} may be different from $\gmetr^{\mu\nu}\partial^2_{\mu\nu}\phi$. 
For our equation, this turns out not to be an issue, as can be seen when we take the first coordinate partial derivatives of \eqref{eq:lagrange2}. 
With the aid of the relation \eqref{eq:InvMet} between $\gmetr^{-1}$ and $\gmetrf^{-1}$ we obtain
\[
	\partial_{\lambda} \partial_{\mu} \left( \frac{\partial^{\gmetrf\sharp\mu}\phi}{\mathcal{L}}\right) = 
		\partial_{\mu} \left( \frac{\partial^{\gmetr\sharp\mu}\partial_\lambda \phi}{\sqrt{|\gmetr|}} \right) 
		+ \partial_{\mu} \left( \frac{\partial_\lambda \gmetrf^{\mu\nu} \partial_\nu \phi}{\sqrt{|\gmetr|}} - \frac12 \frac{\partial^{\gmetrf\sharp\mu}\phi}{|\gmetr|^{3/2}} \partial_\lambda \gmetrf^{\rho\sigma} \partial_\rho\phi \partial_\sigma\phi \right).
\]
Noticing that the derivatives $\partial_\lambda \gmetrf$ depends only on the \emph{first} derivatives of $\phi$, and not the second, we see that the principal term are all captured in the first term on the right in the above identity.

We can simplify the identity further. Notice that 
\[ \partial_\lambda \gmetrf^{-1} = \partial_{\lambda}(2 \phi \bgf'')  ~ \partial_{\ub{u}} \otimes \partial_{\ub{u}} \]
this implies
\begin{multline*}
\partial_{\mu} \left( \frac{\partial_\lambda \gmetrf^{\mu\nu} \partial_\nu \phi}{\sqrt{|\gmetr|}} - \frac12 \frac{\partial^{\gmetrf\sharp\mu}\phi}{|\gmetr|^{3/2}} \partial_\lambda \gmetrf^{\rho\sigma} \partial_\rho\phi \partial_\sigma\phi \right)
= 
2\partial_{\ub{u}} \left( \frac{\partial_\lambda (\phi\bgf'') \partial_{\ub{u}} \phi}{\sqrt{|\gmetr|}}\right) \\
- \underbrace{\partial_\mu \left( \frac{\partial^{\gmetrf\sharp\mu}\phi}{|\gmetr|^{1/2}} \right)}_{|\gmetr|^{-1/2}\bgf'' (\partial_{\ub{u}}\phi)^2} \frac{1}{|\gmetr|}\partial_\lambda(\phi \bgf'') (\partial_{\ub{u}}\phi)^2 
-  \frac{\partial^{\gmetrf\sharp\mu}\phi}{|\gmetr|^{1/2}} \partial_\mu\left( \frac{1}{|\gmetr|} \partial_\lambda(\phi \bgf'') (\partial_{\ub{u}}\phi)^2 \right).
\end{multline*}
So we conclude
\begin{multline}\label{eq:lagrange3}
	|\gmetr|^{-\frac12 - \frac{2}{d-1}} \Box_{\confmetr} \partial_{\lambda} \phi = \partial_{\lambda} \left( |\gmetr|^{-1/2} \bgf'' (\phi_{\ub{u}})^2 \right) \\
    -  2\partial_{\ub{u}} \left( \frac{\partial_\lambda (\phi\bgf'') \partial_{\ub{u}} \phi}{\sqrt{|\gmetr|}}\right) 
    + |\gmetr|^{-3/2} \bgf'' \partial_{\lambda}(\phi \bgf'') (\partial_{\ub{u}}\phi)^4 \\
    +
    |\gmetr|^{-1/2} \partial^{\gmetrf\sharp \mu}\phi \partial_\mu \left( \frac{1}{|\gmetr|} \partial_\lambda(\phi \bgf'') (\partial_{\ub{u}}\phi)^2 \right),
\end{multline}
where we have introduced the conformal metric
\begin{equation}\label{eq:conformalmet}
	\confmetr = |\gmetr|^{-\frac{2}{d-1}} \cdot \gmetr
\end{equation}
where $d$ is, recall, the number of spatial dimensions. The conformal metric $\confmetr$ has its Laplace-Beltrami operator as 
\[
	\Box_{\confmetr} f = |\gmetr|^{\frac12 + \frac{2}{d-1}} \partial_{\mu} \left( \frac{1}{\sqrt{|\gmetr|}} \gmetr^{\mu\nu} \partial_\nu f \right)
\]
which has the same principal part as $\Box_g$. 

\begin{rmk}\label{rmk:resonances}
	Observe that \eqref{eq:geomeqphi} and \eqref{eq:lagrange3} are \emph{geometric} quasilinear wave equations that linearize to the linear wave equation on $\Real^{1,d}$. The \emph{quadratic nonlinearities} include, as can be seen, the \emph{resonant} semilinear interaction $(\partial_{\ub{u}}\phi)^2$ as well as the \emph{weakly resonant} quasilinear interaction $\phi (\partial_{\ub{u}\ub{u}}^2 \phi)$. 

	That we will be able to prove global existence for this equation (and not suffer from shock formation in finite time) is due to the background $\bgf''$ which accompanies the apperance of such resonant terms, and localizes the resonant interactions to the region $t \approx -x^1$; one can think of $\bgf''$ as $\partial^2_{uu} \bgf$, exposing the null condition that was present in the original membrane equation \eqref{eq:membrane}. 
	However, as the background function $\bgf$ has non-compact (in the $\hat{x}$ direction) support, and is non-decaying (in time), the improved decay we obtain due to this space-time localization is weaker than in classical studies of nonlinear waves with null condition. Such issues and their ramifications are discussed in more detail in Section \ref{sect:semilinear} where we examine a semilinear model that captures the main analytical difficulties. 
\end{rmk}

\section{Main analytic tools}
We will approach our analyses of \eqref{eq:geomeqphi} using a variant of the vector field method. 
In particular, we will make use of the refined global Sobolev inequalities adapted to hyperboloidal foliations that was developed by the second author \cite{Wong2017p}. 
This method allows us to conclude our estimates using only the $T$ multiplier (and not the Morawetz $K$ multiplier), as well as using only the Lorentz boosts as commutators.
In the first part of this section we will recall the $L^2$--$L^\infty$ type linear estimates that enables us to limit ourselves to the small selection of vector fields used in the argument.  
For the full nonlinear problem, the background simple plane-wave contributes coefficients in the form of $\bgf''$ in \eqref{eq:geomeqphi}. To efficiently handle these coefficients using \emph{only} the Lorentz boosts as commutators, we will develop in the second part of this section a weighted vector field algebra. 
In the final part of this section we recall elementary energy estimates for wave equations. 
The combination of these techniques will be first illustrated in a model semilinear problem in Section \ref{sect:semilinear}, before we state and prove the main result of this paper. 

\subsection{A global Sobolev inequality}\label{sect:globsob}
We will base our argument on a hyperboloidal foliation of Minkowski space, and make use of a version of the Sobolev inequality for \emph{weighted} spaces on the hyperboloids, as described in \cite{Wong2017p}. 
The specific family of weights are adapted to be used with energy estimates for wave equations on Minkowski space. 
Before recalling the inequalities, we need to introduce some notations. 

Consider Minkowski space as described by our double null coordinate system $(u,\ub{u}, \hat{x})$ with the metric \eqref{eq:minkDN}. 
Consider the set $\mathcal{I}^+ \eqdef \{ u > 0, \ub{u} > 0, 2u \ub{u} - |\hat{x}|^2 > 0\}$. This set corresponds to the \emph{interior of the future light cone} emanating from the origin in Minkowski space. 
On this set, we can define the time function 
\begin{equation}\label{eq:def:tau}
	\tau \eqdef \sqrt{ 2u\ub{u} - |\hat{x}|^2}.
\end{equation} 
\begin{notn}\label{not:def:tau}
	The level set of $\tau$ will be denoted by $\Sigma_\tau$. The Riemannian metric induced on $\Sigma_\tau$ by the Minkowski metric $\mink$ will be denoted $\sigmmetr_\tau$. The geometric metric $\gmetr$ also induces a symmetric bilinear form on $\Sigma_\tau$, we will denote it by $\siggmetr_\tau$. When $\siggmetr_\tau$ can \emph{in principle} be Lorentzian or degenerate, in our application it will turn out to be always Riemannian. 
\end{notn}
These hypersurfaces $\Sigma_\tau$ are \emph{hyperboloids}: the induced metric $\sigmmetr_\tau$ has constant curvature, with scalar curvature $\tau^{-2}$ times that of the standard hyperbolic space $\mathbb{H}^d$. 
We introduce also the function $\rho$ within this forward light-cone $\mathcal{I}^+$ by
\begin{equation}
	\rho \eqdef \cosh^{-1} \left( \frac{u + \ub{u}}{\sqrt{2}\tau} \right).
\end{equation}
We note that relative to the Minkowski metric, the unit normal to $\Sigma_\tau$ is given by (using an abuse of notation)
\begin{equation}\label{eq:minknormsig}
	-(\D\tau)^\sharp = \frac{1}{\tau} \left( u \partial_{u} + \ub{u} \partial_{\ub{u}} + \hat{x} \cdot \partial_{\hat{x}} \right).
\end{equation}
Relative to the perturbed metric $g$, the unit normal to $\Sigma_\tau$ takes the form
\begin{equation}\label{eq:geonormsig}
	- \frac{(\D\tau)^{\gmetr\sharp}}{\sqrt{| \gmetr(\D\tau,\D\tau)|}} = - \frac{(\D\tau)^\sharp + 2(\frac{u}\tau)\phi\bgf'' \partial_{\ub{u}} - \gmetrf(\D\phi,\D\tau) \partial^{g\sharp}\phi }{\sqrt{1 - 2 (\frac{u}\tau)^2 \phi\bgf'' + |\gmetr|^{-1}[ \gmetrf(\D\phi,\D\tau)]^2 }}.
\end{equation}

We define the following vector fields:
\begin{gather}
	\label{eq:def:T} T = \frac{1}{\sqrt{2}} (\partial_u + \partial_{\ub{u}});\\
	\label{eq:def:L1} L^1 =  u \partial_{u} - \ub{u} \partial_{\ub{u}};\\
	\label{eq:def:Li} L^i = \frac{1}{\sqrt{2}}(u + \ub{u}) \partial_{\hat{x}^i}+ \frac{1}{\sqrt{2}}\hat{x}^i (\partial_u + \partial_{\ub{u}}), \mathrlap{\quad i = 2, \ldots, d.}
\end{gather}
They are all Killing with respect to the Minkowski metric; in fact, $T$ is the standard \emph{time-translation} and the $L^i$s ($i = 1, \ldots, d$) are the standard \emph{Lorentz-boosts}. 
Note that the $L^i$ (where $i = 1, \ldots, d$) are also all tangential to $\Sigma_\tau$. 
If $\alpha$ is an $m$-tuple with elements drawn from $\{1, \ldots, d\}$ (namely that $\alpha = (\alpha_1, \ldots, \alpha_m)$ with $\alpha_i \in \{1, \ldots, d\}$), we denote by $L^\alpha$ the differential operator 
\[ f \mapsto L^{\alpha_m} L^{\alpha_{m-1}} \cdots L^{\alpha_2} L^{\alpha_1} f.\]

We introduce also the stress-energy tensors corresponding to a metric 
\begin{equation}\label{eq:def:seten}
	\begin{gathered}
		\seten[\phi;\mink] =  \D\phi \otimes \D\phi - \frac12 \mink(\D\phi,\D\phi) \mink\\
		\seten[\phi;\gmetr] = \D\phi\otimes \D\phi - \frac12 \gmetr(\D\phi,\D\phi) \gmetr.
	\end{gathered}
\end{equation}
Note that $\seten[\phi;\gmetr] = \seten[\phi;\confmetr]$; the (covariant) stress-energy tensors are invariant under conformal changes of metric. 

The main results we will need from \cite{Wong2017p} are the following.

\begin{thm}[Sharp global Sobolev inequality] \label{thm:globsob}
	Let $\ell\in \Real$ be fixed. For any function $f$ defined on $\mathcal{I}^+$, we have, for any $(u,\ub{u}, \hat{x})\in \mathcal{I}^+$, 
	\[ |f(u,\ub{u},\hat{x})|^2 \lesssim_{d,\ell} \tau_0^{-d} \cosh(\rho_0)^{1-d-\ell} \sum \int_{\Sigma_{\tau_0}} \cosh(\rho)^\ell |L^\alpha f|^2~ \dvol_{\sigmmetr_{\tau_0}}.\]
	The sum is taken over all $m$-tuples $\alpha$ with elements drawn from $\{1, \ldots, d\}$ with $m \leq \lfloor \frac{d}{2} \rfloor + 1$. The quantities $\tau_0$ and $\rho_0$ appearing on the right of the inequality are given as
	\[ \tau_0 = \tau(u,\ub{u},\hat{x}) , \qquad \rho_0 = \rho(u,\ub{u},\hat{x}).\]
\end{thm}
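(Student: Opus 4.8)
The plan is to peel off a trivial scaling, introduce graphical coordinates on the hyperboloid in which the Lorentz boosts become conformal multiples of coordinate derivatives, and thereby reduce the assertion to a purely flat, weighted Sobolev inequality on $\Real^d$ that succumbs to a one-step rescaling argument.

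\emph{Reduction by scaling.} The vector fields $L^1,\dots,L^d$ and the function $\rho$ are invariant under the dilations $(u,\ub{u},\hat{x})\mapsto(\lambda u,\lambda\ub{u},\lambda\hat{x})$, which send $\Sigma_\tau$ to $\Sigma_{\lambda\tau}$ and multiply $\dvol_{\sigmmetr_\tau}$ by $\lambda^d$; matching this against the explicit power $\tau_0^{-d}$ on the right shows that both sides of the claimed inequality transform identically. Hence it suffices to prove the inequality on the single hyperboloid $\Sigma_1$, which carries the metric of standard hyperbolic space $\mathbb{H}^d$.

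\emph{Graphical coordinates.} Passing to Cartesian coordinates $(z^0,\dots,z^d)$ for $\mink$ (with $z^0 = (u+\ub{u})/\sqrt{2}$, $z^1 = (u-\ub{u})/\sqrt{2}$, $z^i = \hat{x}^i$ for $i\ge 2$), the surface $\Sigma_1$ is the graph $z^0 = \langle y\rangle$ over $y = (z^1,\dots,z^d)\in\Real^d$, where $\langle y\rangle := \sqrt{1+|y|^2}$; in these coordinates $\cosh\rho = \langle y\rangle$ and $\dvol_{\sigmmetr_1} = \langle y\rangle^{-1}\,\D^d y$. The computation I would do next is the key one: each boost $L^i = z^i\partial_{z^0}+z^0\partial_{z^i}$ restricts to $\Sigma_1$ as $L^i = \langle y\rangle\,\partial_{y^i}$ — the Lorentz boosts are exactly conformal to the coordinate derivatives. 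Iterating and using $|\partial^\beta_y\langle y\rangle^m|\lesssim\langle y\rangle^{m-|\beta|}$ then yields the two-sided bound $\sum_{|\alpha|\le k}|L^\alpha f|^2\approx_{d,k}\sum_{|\gamma|\le k}\langle y\rangle^{2|\gamma|}|\partial^\gamma f|^2$. Writing $y_p$ for the image of $(u,\ub{u},\hat{x})$ (so $\langle y_p\rangle = \cosh\rho_0$) and $k = \lfloor d/2\rfloor+1$, the claim is therefore equivalent to
\[ |f(y_p)|^2 \;\lesssim_{d,\ell}\; \langle y_p\rangle^{\,1-d-\ell}\sum_{|\gamma|\le k}\int_{\Real^d}\langle y\rangle^{\,\ell+2|\gamma|-1}\,|\partial^\gamma f|^2\,\D^d y. \]

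\emph{The flat weighted inequality.} This last estimate I would prove by discarding everything except the dyadic shell nearest $y_p$: localize to $R = \{\tfrac12|y_p|\le|y|\le 2|y_p|\}$ if $|y_p|\ge 2$ and to $R = \{|y|\le 4\}$ otherwise, on which $\langle y\rangle\approx\langle y_p\rangle$. Rescaling $R$ to a fixed annulus (or ball) and invoking the elementary embedding $W^{k,2}\hookrightarrow L^\infty$, valid exactly because $k>\tfrac d2$, gives $|f(y_p)|^2\lesssim_d\sum_{|\gamma|\le k}\langle y_p\rangle^{2|\gamma|-d}\int_R|\partial^\gamma f|^2\,\D^d y$; since $\langle y_p\rangle^{1-d-\ell}\langle y\rangle^{\ell+2|\gamma|-1}\approx\langle y_p\rangle^{2|\gamma|-d}$ on $R$, this is dominated by the stated right-hand side. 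Unwinding the reductions finishes the proof.

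\emph{Where the work lies.} Everything downstream of the identity $L^i = \langle y\rangle\,\partial_{y^i}$ is weight-bookkeeping, so the substance is concentrated in (i) verifying that identity (i.e.\ that the boosts are conformal to coordinate derivatives in graphical coordinates on the hyperboloid) and (ii) checking that the commutator terms produced when trading $\{L^\alpha\}$ for $\{\langle y\rangle^{|\gamma|}\partial^\gamma\}$ genuinely carry the reduced powers of $\langle y\rangle$ asserted above — the one place a careless estimate would cost a power and break the exact matching of exponents. The behaviour near $\rho=0$, where the level sets of $\rho$ collapse, is absorbed harmlessly into the ``ball'' case of the localization.
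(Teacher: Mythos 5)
The paper does not prove Theorem~\ref{thm:globsob}: it is imported, together with Proposition~\ref{prop:sedecomp} and Lemma~\ref{lem:hardy}, from the reference \cite{Wong2017p}, so there is no internal proof against which to compare your attempt. Judged on its own, the argument you sketch is correct, and it is the natural one.

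Each link in your reduction checks out. The dilation $(u,\ub{u},\hat{x})\mapsto(\lambda u,\lambda\ub{u},\lambda\hat{x})$ leaves every $L^i$ and the function $\rho$ invariant, maps $\Sigma_\tau$ to $\Sigma_{\lambda\tau}$, and scales $\dvol_{\sigmmetr_\tau}$ by $\lambda^d$; the $\tau_0^{-d}$ prefactor absorbs this, so one may fix $\tau_0=1$. In the coordinates $y^\mu$ of \eqref{eq:ycoord} the vector fields \eqref{eq:def:L1}--\eqref{eq:def:Li} all take the form $L^i = y^0\partial_{y^i}+y^i\partial_{y^0}$, and their restriction to the graph $y^0=\langle y\rangle\eqdef\sqrt{1+|y|^2}$ is $\langle y\rangle\,\partial_{y^i}$, while $\cosh\rho=\langle y\rangle$ and $\dvol_{\sigmmetr_1}=\langle y\rangle^{-1}\D^d y$, as you state. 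The two-sided comparison $\sum_{|\alpha|\le k}|L^\alpha f|^2\approx\sum_{|\gamma|\le k}\langle y\rangle^{2|\gamma|}|\partial^\gamma f|^2$ holds because commuting a copy of $\langle y\rangle$ past a coordinate derivative only produces lower-order remainders with coefficients $y^j/\langle y\rangle$, which are uniformly bounded; this is exactly the bookkeeping you flag as delicate, and it goes through with no stray powers of $\langle y\rangle$. Finally, the dyadic localization around $y_p$ and the rescaled $W^{k,2}\hookrightarrow L^\infty$ embedding (valid since $k=\lfloor d/2\rfloor+1>d/2$ in every dimension) give $|f(y_p)|^2\lesssim\sum_{|\gamma|\le k}\langle y_p\rangle^{2|\gamma|-d}\int_R|\partial^\gamma f|^2\,\D^d y$, and on $R$ one has $\langle y_p\rangle^{2|\gamma|-d}\approx\langle y_p\rangle^{1-d-\ell}\langle y\rangle^{\ell+2|\gamma|-1}$, matching the claimed weight. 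No boundary or decay hypotheses on $f$ are needed because the localized Sobolev embedding is unconditional, and if the right-hand integral diverges the inequality is vacuous. The argument is complete.
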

\begin{rmk}
	Note that by the definition of the function $\rho$, the coefficient in Theorem \ref{thm:globsob} can be written as
	\[ \tau_0^{-d} \cosh(\rho_0)^{1-d-\ell} = \tau_0^{\ell-1} \left( \frac{u + \ub{u}}{\sqrt{2}} \right)^{1-d-\ell}.\]
\end{rmk}
\begin{prop}[Decomposition of stress-energy] \label{prop:sedecomp}
	At every point in $\mathcal{I}^+$ we have the pointwise identity
	\[ \seten[f;\mink](T, -(\D\tau)^\sharp) = \frac{1}{2 \tau^2 \cosh(\rho)} \sum_{i = 1}^d (L^if)^2 + \frac{1}{2\cosh(\rho)} (Tf)^2.\]
\end{prop}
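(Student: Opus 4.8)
The asserted identity is pointwise and, once the definitions are unwound, purely algebraic; the plan is to expand both sides in the double-null coordinates $(u,\ub{u},\hat{x})$ and match. From the definition \eqref{eq:def:seten}, for any vector field $X$ one has $\seten[f;\mink](T,X) = (Tf)(Xf) - \tfrac12\,\mink(\D f,\D f)\,\mink(T,X)$. Taking $X = -(\D\tau)^\sharp$: since $-(\D\tau)^\sharp$ is the $\mink$-metric dual of the covector $-\D\tau$, we have $\mink(T,-(\D\tau)^\sharp) = -T\tau$, and a one-line computation from \eqref{eq:def:tau} gives $T\tau = \tfrac1{\sqrt2}(\partial_u+\partial_{\ub{u}})\tau = \tfrac{u+\ub{u}}{\sqrt2\,\tau}$, which by the definition of $\rho$ equals $\cosh\rho$. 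Hence
\[ \seten[f;\mink](T,-(\D\tau)^\sharp) = (Tf)\big({-}(\D\tau)^\sharp f\big) + \tfrac12\cosh\rho\cdot\mink(\D f,\D f). \]

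Next I would substitute the explicit formula \eqref{eq:minknormsig} for $-(\D\tau)^\sharp f$, use the expansion $\mink(\D f,\D f) = -2\,\partial_u f\,\partial_{\ub{u}}f + |\partial_{\hat{x}}f|^2$, and separately expand $\sum_{i=1}^d(L^i f)^2$ via \eqref{eq:def:L1}--\eqref{eq:def:Li} (so $(L^1 f)^2 = (u\partial_u f - \ub{u}\partial_{\ub{u}}f)^2$, and $(L^i f)^2 = \tfrac12\big((u+\ub{u})\partial_{\hat{x}^i}f + \hat{x}^i(\partial_u f+\partial_{\ub{u}}f)\big)^2$ for $i\geq 2$). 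The comparison then reduces to elementary bookkeeping. First, the $\partial_u f\,\partial_{\ub{u}}f$ cross term produced by $(Tf)\big({-}(\D\tau)^\sharp f\big)$ equals exactly $\cosh\rho\,\partial_u f\,\partial_{\ub{u}}f$, which cancels the $\tfrac12\cosh\rho\cdot(-2\,\partial_u f\,\partial_{\ub{u}}f)$ term. Second, the mixed $(\partial_u f+\partial_{\ub{u}}f)(\hat{x}\cdot\partial_{\hat{x}}f)$ terms on the two sides coincide after dividing $\sum(L^i f)^2$ by $2\tau^2\cosh\rho$. Third — the only step where the geometry enters — the remaining quadratic terms in $(\partial_u f,\partial_{\ub{u}}f,\partial_{\hat{x}}f)$ match once one invokes the two defining relations $2u\ub{u} = \tau^2+|\hat{x}|^2$ and $\cosh\rho = \tfrac{u+\ub{u}}{\sqrt2\,\tau}$: these collapse the $(\partial_u f,\partial_{\ub{u}}f)$-quadratic part of $\tfrac1{2\tau^2\cosh\rho}\sum(L^i f)^2 + \tfrac1{2\cosh\rho}(Tf)^2$ into $\tfrac1{\sqrt2\,\tau}\big(u(\partial_u f)^2 + \ub{u}(\partial_{\ub{u}}f)^2\big)$, which is precisely the left-hand side's contribution, and likewise reconcile the $|\partial_{\hat{x}}f|^2$ coefficients.

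I do not anticipate any genuine obstacle: this is a short, mechanical verification. The one point requiring care is purely bookkeeping — tracking the factors of $\sqrt2$ and the signs in \eqref{eq:minknormsig} and \eqref{eq:def:T}--\eqref{eq:def:Li} — because both the cancellation of the cross terms and the collapse of the diagonal terms rest on exactly those coefficients together with the single relation $\tau^2 = 2u\ub{u} - |\hat{x}|^2$, so a stray constant anywhere would break the identity. A slightly more invariant route is also available: split $T = \cosh\rho\cdot\big({-}(\D\tau)^\sharp\big) + T^{\parallel}$ into its parts normal and tangent to $\Sigma_\tau$, compute $\seten[f;\mink]\big({-}(\D\tau)^\sharp,-(\D\tau)^\sharp\big) = \tfrac12\big(((-(\D\tau)^\sharp)f)^2 + |\nabla^{\sigmmetr_\tau}f|^2\big)$ and $\seten[f;\mink](T^{\parallel},-(\D\tau)^\sharp) = (T^{\parallel}f)\big((-(\D\tau)^\sharp)f\big)$, and then use that the boosts $L^1,\dots,L^d$ span $T\Sigma_\tau$ to rewrite $\sum_i(L^i f)^2$ in terms of the $\Sigma_\tau$-intrinsic gradient $\nabla^{\sigmmetr_\tau}f$ and $Tf$; this ultimately reduces to the same computation.
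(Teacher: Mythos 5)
Your verification is correct. Expanding in the double-null coordinates $(u,\ub{u},\hat{x})$, the identity does indeed reduce to algebraic bookkeeping, and every step you flag checks out: $T\tau = \tfrac{u+\ub{u}}{\sqrt2\,\tau} = \cosh\rho$, so the $\partial_u f\,\partial_{\ub{u}} f$ cross terms from $(Tf)\bigl(-(\D\tau)^\sharp f\bigr)$ and from $\tfrac12\cosh\rho\,\mink(\D f,\D f)$ cancel; the mixed $(\partial_u f+\partial_{\ub{u}}f)(\hat x\cdot\partial_{\hat x}f)$ terms agree after using $\tfrac{1}{2\tau^2\cosh\rho}\cdot(u+\ub{u}) = \tfrac{1}{\sqrt2\,\tau}$; and the remaining diagonal pieces collapse to $\tfrac{1}{\sqrt2\,\tau}\bigl(u(\partial_u f)^2+\ub{u}(\partial_{\ub{u}}f)^2\bigr) + \tfrac{u+\ub{u}}{2\sqrt2\,\tau}|\partial_{\hat x}f|^2$ on both sides once $\tau^2 = 2u\ub{u}-|\hat x|^2$ is invoked. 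Note that the paper does not prove this proposition internally — it cites it from \cite{Wong2017p} — so there is no in-paper argument to compare against; your direct expansion is the natural route for a pointwise algebraic identity of this kind, and it matches the calculation one would carry out following that reference.
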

\begin{lem}[Hardy's inequality, $d\geq 3$] \label{lem:hardy}
	Let $d \geq 3$. For any function $f$ defined on $\Sigma_\tau$, we have
	\[ \int_{\Sigma_\tau} \frac{1}{\cosh(\rho)} |f|^2 ~\dvol_{\sigmmetr_\tau} \leq \frac{4}{(d-2)^2} \int_{\Sigma_{\tau}} \frac{1}{\cosh(\rho)} \sum_{i = 1}^d (L^i f)^2 ~\dvol_{\sigmmetr_\tau}.\]
\end{lem}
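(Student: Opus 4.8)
The statement to prove is Lemma~\ref{lem:hardy}, a weighted Hardy inequality on the hyperboloid $\Sigma_\tau$ with weight $1/\cosh(\rho)$, comparing $L^2$ of a function to $L^2$ of its boost-derivatives $L^i f$, $i = 1, \ldots, d$.

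\medskip

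The plan is to reduce the inequality to a standard computation in hyperbolic geometry. First I would recall that $\Sigma_\tau$ is, up to the overall scaling by $\tau$, isometric to standard hyperbolic space $\mathbb{H}^d$, and that under this identification the function $\rho$ is precisely the geodesic distance from the ``center'' of the hyperboloid (the point where $\hat x = 0$ and $u = \ub u$), so that $\cosh(\rho)$ is the usual warping factor in geodesic polar coordinates $\D s^2 = \D\rho^2 + \sinh^2(\rho)\, \D\omega^2$ on $\mathbb{H}^d$. The key structural fact, which I would verify by direct computation from \eqref{eq:def:L1}--\eqref{eq:def:Li}, is that the boost vector fields $L^1, \ldots, L^d$ restricted to $\Sigma_\tau$ span the tangent space and, crucially, that $\sum_{i=1}^d (L^i f)^2$ controls $\tau^2 \cosh^2(\rho) \, |\nabla_{\sigmmetr_\tau} f|^2$ from above — in fact one expects an identity of the schematic form $\sum_i (L^i f)^2 = \tau^2 \big[ (\text{radial part})^2 \cosh^2\rho + (\text{spherical part}) \sinh^2\rho \big]$ or similar, which in particular gives $\sum_i (L^i f)^2 \gtrsim \tau^2 \cosh^2(\rho)\, (\partial_\rho f)^2$. (This is closely related to Proposition~\ref{prop:sedecomp}, which already exhibits $\sum_i (L^i f)^2$ as the tangential energy density.)

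\medskip

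With that in hand, the inequality becomes: on $\mathbb{H}^d$ with geodesic polar coordinates,
\[ \int_{\mathbb{H}^d} \frac{|f|^2}{\cosh(\rho)} \,\dvol \leq \frac{4}{(d-2)^2} \int_{\mathbb{H}^d} \cosh(\rho)\, (\partial_\rho f)^2 \,\dvol, \]
where I have used $\dvol_{\sigmmetr_\tau} = \tau^d \,\dvol_{\mathbb{H}^d}$ and the radial lower bound for $\sum_i (L^i f)^2$ to drop the spherical terms (the $\tau$ powers cancel since both sides scale the same way). This is a one-dimensional Hardy inequality in the $\rho$ variable with measure $\sinh^{d-1}(\rho)\,\D\rho$: integrating by parts against the vector field $\partial_\rho$ with a cutoff weight, one writes $\int \frac{|f|^2}{\cosh\rho}\sinh^{d-1}\rho\,\D\rho$, integrates by parts to move a derivative onto $f^2$, and uses Cauchy--Schwarz. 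The constant $\frac{4}{(d-2)^2}$ and the condition $d \geq 3$ come out exactly as in the Euclidean Hardy inequality: near $\rho = 0$ the volume density $\sinh^{d-1}\rho \sim \rho^{d-1}$ mimics $\Real^d$, so the singularity at the center is the borderline case requiring $d \geq 3$; at large $\rho$ the weights $\sinh^{d-1}\rho/\cosh\rho$ versus $\cosh\rho \sinh^{d-1}\rho$ are harmless since the latter dominates. One should be slightly careful to first prove the estimate for compactly supported $f$ and then pass to general $f$ on $\Sigma_\tau$ by density/truncation, discarding boundary terms at $\rho \to \infty$ that have a good sign.

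\medskip

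The main obstacle I anticipate is not the one-dimensional Hardy estimate itself — that is classical — but rather the bookkeeping that connects $\sum_{i=1}^d (L^i f)^2$ to the intrinsic radial derivative $(\partial_\rho f)^2 \cosh^2(\rho)$ cleanly, i.e.\ making precise the claim that the boosts tangent to $\Sigma_\tau$ reproduce (a multiple of) the hyperbolic gradient with the right $\cosh\rho$ weight. Concretely one must compute $\sigmmetr_\tau(L^i, L^j)$ and $L^i \rho$ and verify the coefficient. Once that algebraic identity is pinned down (it is essentially forced by Proposition~\ref{prop:sedecomp} together with the formula for $-(\D\tau)^\sharp$ in \eqref{eq:minknormsig}), the rest is routine. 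Alternatively, if one prefers to avoid introducing explicit coordinates on $\mathbb{H}^d$, the entire argument can be run directly on $\Sigma_\tau$: multiply $|f|^2$ by the divergence (with respect to $\sigmmetr_\tau$) of a suitably normalized radial vector field $X$ with $\Div_{\sigmmetr_\tau} X \geq c/\cosh(\rho)$, integrate by parts, and absorb $\int f\, X(f)$ by Cauchy--Schwarz against $\int \cosh(\rho)^{-1} \sum_i (L^i f)^2$; the natural choice is $X$ proportional to the $\sigmmetr_\tau$-gradient of $\log\cosh(\rho)$ or of $\rho$ itself, and $\Div_{\sigmmetr_\tau}(\partial_\rho) = (d-1)\coth(\rho)$ gives the constant $(d-2)^2/4$ after optimizing the free parameter in $X = \varphi(\rho)\partial_\rho$.
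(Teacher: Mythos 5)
The paper does not prove Lemma~\ref{lem:hardy}; it is quoted from \cite{Wong2017p} alongside Theorem~\ref{thm:globsob} and Proposition~\ref{prop:sedecomp}, so there is no in-paper proof to compare against. Evaluating your proposal on its own terms: the overall strategy — identify $\Sigma_\tau$ with (a $\tau$-rescaling of) $\mathbb{H}^d$, write $\rho$ as hyperbolic geodesic distance, control the radial derivative by $\sum_i(L^if)^2$, and prove a 1D weighted Hardy inequality — is the right one. However, your key structural claim $\sum_i(L^if)^2 \gtrsim \tau^2\cosh^2(\rho)(\partial_\rho f)^2$ is incorrect. A direct computation in geodesic polar coordinates $(\rho,\omega)$ on $\Sigma_\tau$ (where $y^i = \tau\sinh\rho\,\omega^i$, so $L^i\rho = \omega^i$ and $L^i\omega^j = \coth\rho\,(\delta_{ij}-\omega^i\omega^j)$) gives
\[
\sum_{i=1}^d (L^if)^2 \;=\; (\partial_\rho f)^2 \;+\; \coth^2(\rho)\,\bigl|\nabla^{\Sphere^{d-1}}_\omega f\bigr|^2 \;\geq\; (\partial_\rho f)^2,
\]
with \emph{no} prefactor $\tau^2\cosh^2\rho$ on the radial part. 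Your parenthetical ``the $\tau$ powers cancel'' was in fact a warning sign: with your claimed bound they do not cancel — an extraneous $\tau^2$ survives — whereas with the correct identity the reduction is manifestly $\tau$-independent.

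The error matters because it changes the weight on the right-hand side of the reduced scalar inequality, which in turn determines whether the constant $4/(d-2)^2$ can be obtained. The correct reduction (after $\dvol_{\sigmmetr_\tau}=\tau^d\sinh^{d-1}\rho\,\D\rho\,\D\omega$, dropping the angular part, and Fubini over $\omega$) is to
\[
\int_0^\infty \frac{|f|^2}{\cosh\rho}\sinh^{d-1}\rho\,\D\rho \;\leq\; \frac{4}{(d-2)^2}\int_0^\infty \frac{(\partial_\rho f)^2}{\cosh\rho}\sinh^{d-1}\rho\,\D\rho,
\]
with the \emph{same} weight $w(\rho)=\sinh^{d-1}\rho/\cosh\rho$ on both sides — not $\cosh\rho\,\sinh^{d-1}\rho$ on the right, as you wrote. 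This one-dimensional inequality does hold with the stated sharp constant: setting $G(\rho)=\int_0^\rho w$, integrating $|f|^2\,\D G$ by parts, and applying Cauchy–Schwarz reduces matters to showing $G(\rho)\leq \tfrac{1}{d-2}\,w(\rho)$ for all $\rho>0$; writing $t=\tanh\rho$ one finds $(G-\tfrac{1}{d-2}w)' = \tfrac{\cosh^{d-2}\rho}{d-2}\,t^{d-2}(t+d-1)(t-1)<0$, so the difference is strictly decreasing from $0$, which gives the bound. Your alternative ``multiplier'' formulation is also viable, but the vector field must be $X=\varphi(\rho)\,\partial_\rho$ with $\varphi\propto 1/\cosh\rho$ (so that $\cosh\rho\,\varphi^2$ matches the weight $1/\cosh\rho$ after Cauchy–Schwarz); taking $X=\partial_\rho$ or $X\propto\nabla_\sigma\log\cosh\rho$ yields a valid but lossy estimate, not the constant $4/(d-2)^2$.
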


\subsection{A weighted vector field algebra}\label{sect:wvfalg}

In classical arguments using the vector field method, one typically commutes the equation with the generators of the Poincar\'e group, which consists of the 
\begin{itemize}
	\item translation vector fields $\partial_t, \partial_{x^i}$;
	\item rotations $x^i \partial_{x^j} - x^j \partial_{x^i}$;
	\item Lorentz boosts $t \partial_{x^i} + x^i \partial_t$.
\end{itemize}
These vector fields form, under the Lie bracket, an $\Real$-algebra. 

In applying the Sobolev inequality of the previous section, we intend to only commute with the Lorentz boosts $L^i$. 
This subset does not form an $\Real$-algebra under the Lie bracket. 
However, they form an algebra with coefficients drawn from a space of weights.
For convenience, we introduce the $y$-coordinates
\begin{equation}\label{eq:ycoord}
		y^0 = \frac{u+\ub{u}}{\sqrt{2}}, \quad y^1 = \frac{u-\ub{u}}{\sqrt{2}}, \quad y^i = \hat{x}^i \;\; (i \geq 2).
\end{equation}
\begin{defn}\label{defn:sweight}
	We denote by $\sweight_*$ the (commutative) ring of polynomial expressions in the $d+1$ variables $\big\{ \frac{y^1}{y^0} , \ldots, \frac{y^d}{y^0}, \frac{1}{y^0} \big\}$, with $\Real$ coefficients. 
	This ring can be graded according to the degree of the $\frac{1}{y^0}$ term in the polynomial expression, we denote by $\sweight_i$ the corresponding set of homogeneous elements. 
\end{defn}

\begin{rmk}
	By way of clarification and for example, we will have that the expression $\frac{y^1}{y^0}\in \sweight_0$, while $(\frac{1}{y^0})^5 (\frac{y^2}{y^0}) (\frac{y^4}{y^0}) \in \sweight_5$. 
\end{rmk}
\begin{rmk}\label{rmk:suppcontrol}
	Notice that within the light cone $\mathcal{I}^+$, we have that the functions (for all $i = 1, \ldots, d$)
	\[ \big| \frac{y^i}{y^0} \big| \leq 1 \]
	are uniformly bounded. 
\end{rmk}
Now, observe that for $i,j \in \{1, \ldots, d\}$,
	\begin{align*}
		T\big(\frac{1}{y^0}\big) &= - \big(\frac{1}{y^0}\big)^2, & T\big(\frac{y^i}{y^0}\big) &= - \frac{1}{y^0}\cdot\frac{y^i}{y^0}, \\
		L^i \big(\frac{1}{y^0}\big) &= - \frac{1}{y^0} \cdot \frac{y^i}{y^0}, & L^i \big(\frac{y^j}{y^0}\big) &= \delta_{ij} - \frac{y^i}{y^0} \cdot \frac{y^j}{y^0}.\\
		\intertext{Furthermore,}
		[L^i, T] &= - \frac{1}{y^0} L^i + \frac{y^i}{y^0} T, &[L^i, L^j] &= \frac{y^i}{y^0} L^j - \frac{y^j}{y^0} L^i.
	\end{align*}
Together these implies that the set of vector fields of the form $c_0 T + \sum c_i L^i$ where the $c_\mu$ are taken from $\sweight_*$ form not only an $\Real$-Lie algebra, but also an algebra over the ring $\sweight_*$, with multiplication being the Lie bracket. 
We will denote this algebra by $\algcomm_*$. 
The following proposition follows immediately from the computations above.
\begin{prop}\label{prop:commalg}
	For $i \in \mathbb{Z}_+$, define
	\[\algcomm_0 \eqdef \left\{ \sum_{j=1}^d c_j L^j\ |\ c_j \in \sweight_0\right\}, \qquad \algcomm_i = \left\{ c_0 T + \sum_{j=1}^d c_j L^j \ | \ c_0 \in \sweight_{i-1}, \ c_j \in \sweight_i\right\}.\] Then $\algcomm_*$ is graded, with $L^i \in \algcomm_0$, and $T\in \algcomm_1$. In particular, given elements $X_a\in \algcomm_a, X_b\in \algcomm_b$ and $f\in \sweight_c$, we have that 
	\[ [X_a, X_b] \in \algcomm_{a+b}, \quad fX_a \in \algcomm_{a+c}.\]
\end{prop}
\begin{rmk}\label{rmk:doubcomm}
	We remark that we also have the following commutator relation
	\[ 
		[L^i, [L^j, T]] = \delta_{ij} T \in \algcomm_{1}
	\]
	as expected. 
\end{rmk}

Using $\algcomm_*$, we can build an algebra of differential operators which we label by $\algdiff_*^{*,*}$. 
Consider terms of the form 
\begin{equation}\label{eq:def:algdiff:term}
	f X^1 X^2 X^3 \ldots X^k
\end{equation}
where $f\in \sweight_*$ and $X^\alpha \in \{L^i, T\}$. 
They are differential operators that act on functions defined on $\mathcal{I}^+$ in the usual way. 
Using the computations above we see that terms of such form are closed under composition of differential operators. 
Hence we define $\algdiff_*^{*,*}$ as the set of finite sums of terms of the form \eqref{eq:def:algdiff:term}, with addition defined normally and composition as multiplication; $\algdiff_*^{*,*}$ is obviously a $\sweight_*$-module. 

In exactly the same way as $\algcomm_*$, the algebra $\algdiff_*^{*,*}$ is graded. We will use its lower index to record this grading. 
\begin{defn}\label{defn:algdiff:ind}
	The \emph{weight} of a term of the form \eqref{eq:def:algdiff:term}, where $f$ is a monomial, is defined by the number of times $T$ appears among the $X^\alpha$, plus the number of times $1/y^0$ appears in the monomial $f$. The \emph{degree} of a term of the form \eqref{eq:def:algdiff:term} is defined as the number $k$. The $T$-\emph{degree} of a term of the form \eqref{eq:def:algdiff:term} is the number of times $T$ appears among the $X^\alpha$. By $\algdiff_w^{k,s}$ we refer to the set of finite sums in $\algdiff_*^{*,*}$ of elements with weight $w$ and degree \emph{at most} $k$, and $T$-degree \emph{at most} $s$. 
\end{defn}

\begin{rmk}
	The set $\algdiff_w^{k,s}$ is well-defined due to Proposition \ref{prop:commalg}. One needs to check that, for example, $f X^1 X^2$ and $f X^2X^1 + f [X^1,X^2]$, which are equal as differential operators, have the same degrees and weight. Proposition \ref{prop:commalg} implies that for $X^\alpha \in \{L^i, T\}$, the terms making up $[X^1,X^2]$ always have the same weight as $X^1 X^2$, and with same or lower $T$-degree. 
\end{rmk}

For example, given any $m$-tuple $\alpha$, the operator $L^\alpha \in \algdiff_0^{m,0}$, while we can identify $\sweight_* = \algdiff_*^{0,0}$.
The set $\algcomm_w$ are the set of degree (exactly) 1 elements in $\algdiff^{*,*}_w$. 
The following proposition follows immediately from the definition and Proposition \ref{prop:commalg}. 
\begin{prop}\label{prop:diffalg}
	If $A\in \algdiff_w^{k,s}$, and $B\in \algdiff_{w'}^{k',s'}$, then 
	\begin{enumerate}
		\item $AB \in \algdiff_{w+w'}^{k+k', s+s'}$;
		\item $[A,B] \in \algdiff_{w+w'}^{k+k' - 1, s+s'}$.
	\end{enumerate}
\end{prop}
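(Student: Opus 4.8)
The plan is to reduce both claims to the behaviour of the generators, using the explicit formulas for how $T$ and the $L^i$ act on $\sweight_*$ together with the bracket relations listed just before Proposition~\ref{prop:commalg}. By bilinearity of composition and of the Lie bracket, and since each $\algdiff_w^{k,\ell}$ is closed under addition, it suffices to treat \emph{single} terms $A = f\,U$, $B = g\,V$ of the form \eqref{eq:def:algdiff:term}, where $f\in\sweight_{w_f}$, $g\in\sweight_{w_g}$ are monomials and $U = X^1\cdots X^p$, $V = Y^1\cdots Y^q$ are words in $\{L^1,\dots,L^d,T\}$; writing $t_U,t_V$ for the number of occurrences of $T$, one has $w = w_f + t_U$, $w' = w_g + t_V$, with $p\le k$, $q\le k'$, $t_U\le\ell$, $t_V\le\ell'$.

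The one computational input beyond the bracket relations is the remark --- immediate from the four displayed formulas for $T(1/y^0),T(y^i/y^0),L^i(1/y^0),L^i(y^j/y^0)$ and the Leibniz rule --- that $\sweight_*$ is preserved by $T$ and each $L^i$, with $T\colon\sweight_c\to\sweight_{c+1}$ and $L^i\colon\sweight_c\to\sweight_c$. So for $h\in\sweight_c$ and $X\in\{L^i,T\}$, as operators $Xh = hX + (Xh)$, the scalar function $Xh$ lying in $\sweight_{c+\epsilon(X)}$ with $\epsilon(T)=1$, $\epsilon(L^i)=0$. Iterating this to carry the monomial $g$ leftward through $U$ produces an identity $Ug = \sum_{R}\, g_R\,U_{R^c}$ summed over subsets $R$ of the $p$ positions, where $g_R\in\sweight_{w_g+t_R}$ is obtained by applying the letters indexed by $R$ to $g$ (with $t_R$ the number of $T$'s among them) and $U_{R^c}$ is the length-$(p-|R|)$ subword on the remaining positions, carrying $t_U - t_R$ occurrences of $T$. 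For part (1) I would then expand $AB = f(Ug)V = \sum_R (fg_R)\,U_{R^c}\,V$ and read off from the grading rules that each summand has weight $(w_f + w_g + t_R) + (t_U - t_R) + t_V = w + w'$, degree $(p-|R|) + q \le k + k'$, and $T$-degree $(t_U - t_R) + t_V \le \ell + \ell'$.

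For part (2) I would isolate the top-degree part: $Ug = gU + E$, where $E$ collects the $R\neq\emptyset$ terms and has degree $\le p-1$, so that $AB = fg\,UV + fEV$ with $fEV\in\algdiff_{w+w'}^{p+q-1,\ell+\ell'}$ by part (1); symmetrically $BA = gf\,VU + \big(\text{element of }\algdiff_{w+w'}^{p+q-1,\ell+\ell'}\big)$. Since $\sweight_*$ is commutative, $fg = gf$, leaving
\[ [A,B] = fg\,[U,V] + \big(\text{element of }\algdiff_{w+w'}^{p+q-1,\ell+\ell'}\big), \]
so it remains to show $[U,V]\in\algdiff_{t_U+t_V}^{\,p+q-1,\;t_U+t_V}$ for pure words $U,V$; multiplying by $fg\in\sweight_{w_f+w_g}$ and applying part (1) then finishes. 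That last claim I would prove by induction on $p+q$: the base case $p=q=1$ is exactly the identities $[L^i,L^j] = \tfrac{y^i}{y^0}L^j - \tfrac{y^j}{y^0}L^i\in\algdiff_0^{1,0}$, $[L^i,T] = -\tfrac{1}{y^0}L^i + \tfrac{y^i}{y^0}T\in\algdiff_1^{1,1}$ and $[T,T]=0$; the inductive step writes $U = XU_1$ (reducing the longer word; if a word is trivial the bracket vanishes) and applies the derivation identity $[XU_1,V] = X[U_1,V] + [X,V]U_1$ together with part (1).

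The routine parts are the two exponent-bookkeeping calculations --- pushing weight factors leftward through vector fields, and adding up weights, degrees and $T$-degrees. The one place where genuine structure enters, and the step I would be most careful about, is the base case of the induction in part (2): the entire grading $\algdiff_w^{k,\ell}$ is calibrated precisely so that the bracket of two generators is again a \emph{first-order} operator with coefficients in $\sweight_*$ of the correct weight --- which is what forces the drop by one in the degree index and the additivity of the weight index --- and this is exactly the content of the commutator relations recorded before Proposition~\ref{prop:commalg}.
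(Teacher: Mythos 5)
Your proof is correct and follows the same essential route the paper has in mind: the paper declares the proposition to ``follow immediately from the definition and Proposition~\ref{prop:commalg}'' and gives no further argument, while you supply the actual bookkeeping --- reducing to monomials and words, pushing $\sweight_*$-coefficients leftward through a word (using $T\colon\sweight_c\to\sweight_{c+1}$, $L^i\colon\sweight_c\to\sweight_c$), and an induction on word length whose base case is exactly the three generator brackets underlying Proposition~\ref{prop:commalg}. Every grading count (weight additive, degree drops by one in the bracket, $T$-degree additive) checks out.
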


We remark finally that if $f = f(u)$ is a function defined within the light cone $\mathcal{I}^+$, then 
\[
	Tf = \frac{1}{\sqrt{2}} f'(u), \quad \text{ and } L^1f = u f'(u).
\]
In particular, if $f$ is smooth and supported within a slab $u\in (a,b)$, then both $Tf$ and $L^1f$ are functions of $u$ alone that are smooth and supported within $u\in (a,b)$. Similarly, we see that for $i \geq 2$
\[ 
	L^i f = \frac{1}{\sqrt{2}} \hat{x}^i f'(u).
\]
To estimate functions of this form, we will use the following lemma. 
\begin{lem}\label{lem:fderest}
	Fix $f = f(u)$ a smooth function supported in $u\in [a,b]$. Then on the set $\mathcal{I}^+$, for any $m$-tuple $\alpha$, 
	we have 
	\[ |L^\alpha f| \lesssim (1 + \ub{u})^{m/2} \cdot \mathbf{1}_{\{u\in [a,b]\}}.\]
	The implicit constant depends on the numbers $a,b$, the degree $m$, the dimension $d$, as well as $\|f\|_{C^m}$. 
\end{lem}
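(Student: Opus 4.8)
The plan is to induct on the degree $m$ of the multi-index $\alpha$, exploiting the observation recorded just before the statement: that $L^1$ and $T$ act on functions of $u$ alone by $L^1 f = u f'(u)$ and $T f = \tfrac{1}{\sqrt 2} f'(u)$, while $L^i f = \tfrac{1}{\sqrt 2}\hat x^i f'(u)$ for $i \geq 2$. The key structural fact is that applying any $L^i$ to a function of $u$ alone produces either another function of $u$ alone (when $i=1$, with an extra factor of $u$) or that same type of function multiplied by a single factor of $\hat x^i$ (when $i \geq 2$). So after applying $L^\alpha$ with $|\alpha| = m$, the result is a finite sum of terms, each of the shape $P(u, \hat x)\, f^{(j)}(u)$ for some $j \leq m$, where $P$ is a polynomial in $u$ and the $\hat x^i$ whose total degree (counting $u$-powers and $\hat x$-powers together) is at most $m$.

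First I would set up the induction carefully. The base case $m = 0$ is immediate: $|f| \leq \|f\|_{C^0}\,\mathbf 1_{\{u \in [a,b]\}}$. For the inductive step, suppose $L^\beta f$ (with $|\beta| = m-1$) is a sum of terms $u^{p} (\hat x^{i_1} \cdots \hat x^{i_q}) f^{(j)}(u)$ with $p + q \leq m - 1$ and $j \leq m-1$, all supported in $\{u \in [a,b]\}$; then apply one more boost $L^k$. When $k = 1$, $L^1$ hits the $u^p$ factor (raising or leaving its power, still bounded on $[a,b]$) and the $f^{(j)}(u)$ factor (producing $u f^{(j+1)}(u)$), and annihilates the $\hat x$ factors; when $k \geq 2$, $L^k$ either differentiates $f^{(j)}$ and multiplies by $\hat x^k$ (raising $q$ by one and $j$ by one, keeping $p+q \leq m$) or — via the $\tfrac1{\sqrt2}(u+\ub u)\partial_{\hat x^k}$ piece — differentiates one of the existing $\hat x^{i_\ell}$ factors, lowering $q$ by one but introducing a factor $(u + \ub u)$, so the total new polynomial degree is again at most $m$. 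In every case the new term is of the claimed shape with degree budget $\leq m$ and a derivative of $f$ of order $\leq m$.

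Next I would extract the bound. Every term produced is a monomial in $u$, in the $\hat x^i$, and in $\ub u$ (via the $(u+\ub u)$ factors), of total degree at most $m$, multiplied by some $f^{(j)}(u)$ with $j \leq m$. On the support $\{u \in [a,b]\}$, the factor $|f^{(j)}(u)| \leq \|f\|_{C^m}$ and every power of $u$ is bounded by $\max(|a|,|b|)^m$. For the $\hat x$ and $\ub u$ factors, within $\mathcal I^+$ one has $2 u \ub u \geq |\hat x|^2 \geq 0$, so on $\{u \in [a,b]\}$ we get $|\hat x^i| \leq |\hat x| \leq \sqrt{2 b\, \ub u} \lesssim (1 + \ub u)^{1/2}$ and trivially $|\ub u| \leq 1 + \ub u$ (and $u+\ub u \lesssim 1 + \ub u$ on the slab). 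Since the total degree in the combined $(\hat x, \ub u)$-variables is at most $m$, each term is bounded by a constant (depending on $a, b, m, d, \|f\|_{C^m}$) times $(1 + \ub u)^{m/2}$, using that $(1+\ub u)^{1/2} \leq (1+\ub u)$ makes the $\ub u$-powers no worse than matching $\hat x$-powers. Summing the finitely many terms gives the stated estimate.

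The main obstacle — though it is really just bookkeeping rather than a genuine difficulty — is tracking the polynomial degree correctly through the $\partial_{\hat x^k}$ branch of $L^k$, where a factor of $\hat x$ is destroyed but a factor of $u + \ub u$ is created: one must be sure this exchange does not increase the total degree beyond $m$, and that the newly created $\ub u$-power, which on $\mathcal I^+ \cap \{u \in [a,b]\}$ grows like $\ub u$ rather than $\ub u^{1/2}$, is still absorbed because it replaces a unit of degree rather than adding one. This is precisely why the exponent in the conclusion is $m/2$ and not smaller: each of the up-to-$m$ boosts can contribute at most one factor that grows like $(1+\ub u)^{1/2}$ in the worst mixed case, and the degree accounting confirms these worst cases cannot all stack multiplicatively beyond $(1+\ub u)^{m/2}$.
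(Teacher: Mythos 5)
Your overall approach matches the paper's: observe how each $L^i$ acts on the building blocks $u$, $\hat x^j$, and $u+\ub u$, expand $L^\alpha f$ as a finite sum of monomials in these times derivatives of $f$, and then exploit $|\hat x|\lesssim\sqrt{\ub u}$ on $\mathcal I^+\cap\{u\in[a,b]\}$. However, there are two linked defects in the bookkeeping. First, the stated inductive hypothesis — that $L^\beta f$ is a sum of terms $u^p(\hat x^{i_1}\cdots\hat x^{i_q})f^{(j)}(u)$ — is false once $|\beta|\geq 2$: the $\tfrac{1}{\sqrt 2}(u+\ub u)\partial_{\hat x^k}$ part of $L^k$ converts an $\hat x$-factor into a $(u+\ub u)$-factor, and $L^1$ sends $(u+\ub u)$ to $u-\ub u$, so your inductive step produces terms your hypothesis excludes and the induction does not formally close; the invariant must include $(u\pm\ub u)$-factors from the start. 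Second, once you do include them, your ``total polynomial degree $\leq m$'' accounting is too weak to yield $(1+\ub u)^{m/2}$: a $(u+\ub u)$-factor costs a full power $(1+\ub u)^1$ while an $\hat x$-factor costs only $(1+\ub u)^{1/2}$, so a degree-$m$ monomial with $q$ factors of $\hat x$ and $r$ factors of $(u+\ub u)$ gives $(1+\ub u)^{q/2+r}$, and $p+q+r\leq m$ does \emph{not} imply $q/2+r\leq m/2$ (take $p=q=0$, $r=m$). What actually rescues the estimate — and what the paper's bound $|u+\ub u|^{m/2}$ is encoding — is that manufacturing a $(u+\ub u)$-factor from scratch costs \emph{two} boost applications (one to create an $\hat x$, another to convert it), so the correct invariant to induct on is the weighted degree $q+2r\leq m$, or equivalently that each boost raises the $(1+\ub u)$-exponent by at most $\tfrac12$. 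Your final sentence states exactly this mechanism, but the degree accounting you appeal to does not prove it; replacing ``total polynomial degree'' by the weighted degree $q+2r$ in the invariant closes the gap and recovers the paper's argument.
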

\begin{proof}
	Observe that if $i,j\in \{2, \ldots, d\}$, 
	\[ L^i \hat{x}^j = \frac{1}{\sqrt{2}} \delta^{ij} (u + \ub{u}) \]
	and that
	\[ L^i (u + \ub{u}) = \sqrt{2} \hat{x}^i.\]
	So we have that up to a universal structural constant depending only on the dimension $d$ and the degree $m$,
	\[ | L^\alpha f| \lesssim \big(1 + |u|^m + |u + \ub{u}|^{m/2} + |\hat{x}|^m\big)\cdot \|f\|_{C^m}\cdot \mathbf{1}_{\{u\in [a,b]\}}.\]
	As on the set of interest, $u \in [\max(a,0),b]$, we have that $|u| < b$. Furthermore, on $\mathcal{I}^+$ by definition we have $2 u\ub{u} > |\hat{x}|^2$. The boundedness of $u$ implies that $|\hat{x}| \lesssim \sqrt{\ub{u}}$. The desired bound follows. 
\end{proof}

\begin{rmk}\label{rmk:cmpctdecay}
	In the Lemma above, we consider only the case of $f$ with compact support in $u$ for convenience in notation and clarity of argument. 
	One sees easily that an analogous statement holds for $f = f(u)$ that is Schwartz in $u$ (i.e.\ smooth with rapid decay of all derivatives in $u$). 
	In fact, our main results, which are stated for $\bgf\in C^\infty_0$, will follow if our background function $\bgf$ is smooth and such that $\bgf''$ and its higher derivatives have sufficiently fast polynomial decay; we leave such easy but tedious generalizations to the reader. 
\end{rmk}

\subsection{Basic estimates}\label{sect:basest}
To close this section we summarize the basic energy estimates used to control solutions to wave equations in the vector field method. 
Define the following energy integrals:
\begin{gather}
	\label{eq:def:eng1}\Energy_\tau[\phi]^2 \eqdef \Energy_\tau[\phi;\mink]^2 \eqdef 2 \int_{\Sigma_{\tau}} \seten[\phi;\mink](T, -(\D\tau)^\sharp) ~\dvol_{\sigmmetr_\tau},\\
	\label{eq:def:eng2}\Energy_\tau[\phi;\gmetr]^2 \eqdef 2\int_{\Sigma_\tau} \frac{1}{\sqrt{|g(\D\tau,\D\tau)|}} \seten[\phi;\gmetr](T, (-\D\tau)^{\gmetr\sharp}) ~\dvol_{\siggmetr_\tau},
\end{gather}
Notice that the one corresponding to the Minkowski metric can be written explicitly as
\begin{equation}\label{eq:def:minkener}
	\Energy_\tau[\phi]^2  = \int_{\Sigma_\tau} \frac{1}{\tau^2 \cosh\rho} \sum_{i = 1}^d (L^i\phi)^2 + \frac{1}{\cosh\rho} (T\phi)^2 ~\dvol_{\sigmmetr_\tau}.
\end{equation}

The significance of these integrals stems from the fact that, denoting by $\mathcal{J}[\phi;g]$ the vector field defined by\footnote{Recall that vector fields are uniquely determined through their dual pairing with a basis of one forms.}
\[ \mathcal{J}[\phi;g](\omega) \eqdef \seten[\phi;g](T, \omega^{g\sharp}) \]
where $\omega$ is an arbitrary one-form, we have that its divergence
\[ \mathrm{div}_g \mathcal{J}[\phi;g] = \frac12 \seten[\phi;g] :_g \lieD_T g + \Box_g \phi \cdot T(\phi) \]
where $:_g$ denotes the double contraction of a pair of symmetric two-tensors using the metric $g$. The tensor $\lieD_Tg$ is the Lie-derivative of the metric $g$ by the vector field $T$.
We call $\mathcal{J}$ the $T$-energy current associated to $\phi$ and $g$.
Integrating the divergence between two level sets $\tau_0 <\tau_1$ of $\tau$ one obtains the \emph{energy inequality}
\begin{equation}\label{eq:engineq}
	\Energy_{\tau_1}[\phi;g]^2 \leq \Energy_{\tau_0}[\phi;g]^2 + \iint_{\tau \in [\tau_0,\tau_1]} \big|\seten[\phi;g]:_g \lieD_Tg\big| + 2\big|\Box_g \phi \cdot T(\phi)\big| ~\dvol_{g}.
\end{equation}
Note that the perturbed metric $g$ in \eqref{eq:engineq} can be replaced with $\mink$, and in that case, as a consequence of $\lieD_T \mink = 0$, the inequality reduces to the standard energy estimate on hyperboloids 
\begin{equation}\label{eq:engineqmink}
\Energy_{\tau_1}[ \phi]^2 - \Energy_{\tau_0}[ \phi]^2 \lesssim \iint_{\tau\in [\tau_0, \tau_1]} |\Box \phi|\cdot |T  \phi| ~\dvol_{\mink}.
\end{equation}
The following proposition is how one obtains pointwise control for terms appearing on the right hand side of \eqref{eq:engineq} - \eqref{eq:engineqmink}. 

\begin{prop} \label{prop:peeling}
	For any function $f$ defined on $\mathcal{I}^+$, we have, for any $(u,\ub{u}, \hat{x})\in \mathcal{I}^+$, 
	\begin{gather*}
		|f(u, \ub{u}, \hat{x})| \lesssim_d \tau^{1-\frac{d}2} \cosh(\rho)^{1 - \frac{d}{2}} \sum_{|\alpha| \leq \lfloor\frac{d}{2}\rfloor} \Energy_{\tau}[L^\alpha f], \\
		|L^i f(u, \ub{u}, \hat{x})| \lesssim_d \tau^{1-\frac{d}2} \cosh(\rho)^{1 - \frac{d}{2}} \sum_{|\alpha| \leq \lfloor\frac{d}{2}\rfloor+1} \Energy_{\tau}[L^\alpha f], \\
		|Tf(u, \ub{u}, \hat{x})| \lesssim_d \tau^{-\frac{d}2} \cosh(\rho)^{1 - \frac{d}{2}} \sum_{|\alpha| \leq \lfloor\frac{d}{2}\rfloor+1} \Energy_{\tau}[L^\alpha f].
	\end{gather*}
\end{prop}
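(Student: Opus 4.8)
The plan is to chain together the three results recalled from \cite{Wong2017p}: the sharp global Sobolev inequality (Theorem \ref{thm:globsob}), the pointwise decomposition of the stress-energy density (Proposition \ref{prop:sedecomp}), and Hardy's inequality (Lemma \ref{lem:hardy}), and then convert the resulting weighted $L^2$ norms of $L^\alpha f$ into the Minkowski energies $\Energy_\tau$. First I would apply Theorem \ref{thm:globsob} with the weight exponent $\ell = 0$ (for the pointwise bound on $f$) and with $\ell = 1$ (for the bounds on $L^i f$ and $T f$, where we will want an extra $\cosh(\rho)$ weight to match the density appearing in $\Energy_\tau$). Using the remark following the theorem, the prefactor $\tau_0^{-d}\cosh(\rho_0)^{1-d-\ell}$ becomes $\tau_0^{\ell - 1}(\frac{u+\ub u}{\sqrt2})^{1-d-\ell}$; but since in $\mathcal{I}^+$ one has $\frac{u+\ub u}{\sqrt2} = \tau\cosh\rho \geq \tau$ and $\cosh\rho\geq 1$, these can be simplified — for $\ell = 0$ to $\lesssim \tau_0^{-1}\cdot(\tau_0\cosh\rho_0)^{1-d} = \tau_0^{-d}\cosh(\rho_0)^{1-d}$, which I will want to rewrite as $\tau_0^{2-d}\cosh(\rho_0)^{2-d}$ after absorbing; the exact bookkeeping of the $\tau$ versus $\cosh\rho$ powers is what produces the stated exponents $\tau^{1-d/2}\cosh(\rho)^{1-d/2}$ after taking square roots.

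The key step is to bound the right-hand integrand $\cosh(\rho)^\ell |L^\alpha f|^2$ appearing in Theorem \ref{thm:globsob} by the energy density $\seten[L^\alpha f;\mink](T,-(\D\tau)^\sharp)$, up to constants. By Proposition \ref{prop:sedecomp} this density equals $\frac{1}{2\tau^2\cosh\rho}\sum_i (L^i L^\alpha f)^2 + \frac{1}{2\cosh\rho}(T L^\alpha f)^2$. For the first pointwise estimate (on $f$ itself) I take $\ell = 1$ in the Sobolev inequality so that the integrand is $\cosh(\rho)|L^\alpha f|^2$; then I use Lemma \ref{lem:hardy} to dominate $\int \frac{1}{\cosh\rho}|L^\alpha f|^2$ by $\int\frac{1}{\cosh\rho}\sum_i(L^i L^\alpha f)^2$, and the latter is controlled by $\tau^2 \Energy_\tau[L^\alpha f]^2$ via the explicit formula \eqref{eq:def:minkener} — here the range $|\alpha|\leq\lfloor d/2\rfloor$ combined with the extra $L^i$ from Hardy gives at most $\lfloor d/2\rfloor+1$ boosts, matching the Sobolev requirement. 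For the bounds on $L^i f$ and $Tf$ one instead applies the Sobolev inequality to $f$ replaced by $L^i f$ or $Tf$ (which raises the allowed multi-index length to $\lfloor d/2\rfloor+1$), takes $\ell = 0$, and observes that the integrand $|L^\alpha L^i f|^2$ or $|L^\alpha Tf|^2$ is, up to the commutator relations summarized before Proposition \ref{prop:commalg} and in Remark \ref{rmk:doubcomm}, a sum of terms of the shape $\cosh(\rho)\cdot[\text{energy density of }L^\beta f]$ with $|\beta|\leq\lfloor d/2\rfloor+1$; the commutators $[L^i,L^j]$ and $[L^i,T]$ only reshuffle among the $L$'s and $T$ at the same or lower order, with $\sweight_0$ coefficients that are bounded in $\mathcal{I}^+$, so no loss occurs.

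The main obstacle — really the only nontrivial point — is tracking the precise powers of $\tau$ and $\cosh\rho$ through these substitutions so that the final exponents come out as $\tau^{1-d/2}\cosh(\rho)^{1-d/2}$ for $|f|$ and $|L^if|$ and $\tau^{-d/2}\cosh(\rho)^{1-d/2}$ for $|Tf|$. The asymmetry between the $L^i f$ bound and the $Tf$ bound (one extra power of $\tau$ on the latter) comes directly from the $\frac{1}{\tau^2\cosh\rho}$ versus $\frac{1}{\cosh\rho}$ weights distinguishing the $L$- and $T$-contributions to the energy density in Proposition \ref{prop:sedecomp} and formula \eqref{eq:def:minkener}: after dividing by $\tau^2$ to isolate $\sum(L^i g)^2$ one gains a $\tau^2$, hence a $\tau$ after taking the square root, relative to the $T$-derivative case. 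I would also note that the use of Hardy's inequality (and hence the restriction $d\geq 3$) is needed only for the first, $\ell=1$ estimate on $|f|$; for the derivative bounds the $\ell = 0$ Sobolev inequality already supplies the needed $\cosh(\rho)^{-1}$-weighted integrand without any Hardy step. Everything else is a direct substitution, so the proof is short.
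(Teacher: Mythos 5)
Your overall plan is the right one --- chain together the sharp global Sobolev inequality (Theorem~\ref{thm:globsob}), the stress-energy decomposition (Proposition~\ref{prop:sedecomp}), and Hardy's inequality (Lemma~\ref{lem:hardy}), and match the resulting weighted $L^2$ integrals against the Minkowski energy \eqref{eq:def:minkener}. This is indeed what the paper intends when it says the proposition follows immediately from those tools. But your bookkeeping of the Sobolev weight parameter $\ell$ contains a real error, not just a slip of notation: you first propose $\ell = 0$ for the $|f|$ estimate (and $\ell = 1$ for the derivative bounds), then a few sentences later switch to $\ell = 1$ for the $|f|$ estimate, and in the same sentence claim the resulting integrand is $\cosh(\rho)^{-1}|L^\alpha f|^2$ --- but for $\ell = 1$ the integrand is $\cosh(\rho)\,|L^\alpha f|^2$, to which Lemma~\ref{lem:hardy} cannot be applied. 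And with $\ell = 0$ the integrand $|L^\alpha f|^2$ carries no $\cosh(\rho)$ weight at all; since $\cosh\rho$ is unbounded on each $\Sigma_\tau$, there is no way to compare $\int_{\Sigma_\tau}|L^\alpha f|^2 \,\dvol_{\sigmmetr_\tau}$ against the energy density $\frac{1}{\tau^2\cosh\rho}\sum(L^iL^\alpha f)^2 + \frac{1}{\cosh\rho}(TL^\alpha f)^2$ by pulling a constant out of the integral, and the ``absorbing'' step you invoke cannot supply the missing weight.

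The correct choice --- for all three estimates --- is $\ell = -1$, so that the Sobolev integrand $\cosh(\rho)^{-1}|L^\alpha f|^2$ matches the $\cosh(\rho)^{-1}$ weight of the energy density exactly, and the prefactor becomes $\tau^{-d}\cosh(\rho)^{2-d}$. From there everything you outline goes through. For the $|f|$ bound: apply Hardy to the $\alpha=\emptyset$ term of the Sobolev sum and peel the outermost boost off each $\alpha\neq\emptyset$ term; each step produces $\tau^2$ times an energy density integral, so one lands on $\tau^{2-d}\cosh(\rho)^{2-d}\sum_{|\alpha|\leq\lfloor d/2\rfloor}\Energy_\tau[L^\alpha f]^2$ and takes square roots. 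For $|L^if|$ and $|Tf|$: apply the Sobolev inequality to $L^if$, $Tf$ themselves and use the commutator algebra (Proposition~\ref{prop:commalg}, Remark~\ref{rmk:doubcomm}) to rewrite $L^\alpha L^i$ and $L^\alpha T$ modulo $\sweight_*$-coefficients bounded on $\mathcal{I}^+$. Your two structural observations are correct once the $\ell$ is fixed: the extra power of $\tau$ distinguishing the first two estimates from the $|Tf|$ bound comes precisely from the $\tau^{-2}$ normalization of the boost contribution in \eqref{eq:def:minkener}, and Hardy (hence $d\geq3$) is only needed for the undifferentiated $|f|$ bound, since the commutator expansions of $L^\alpha L^i$ and $L^\alpha T$ never produce a degree-zero (multiplication) term, so there is always at least one derivative left to feed into the energy density.
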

\begin{proof}

We provide the proof of the estimates for $f$ and $Tf$, as the estimate for $L^if$ is similar and simpler to deduce. Using Theorem \ref{thm:globsob} with $\ell = -1$, we find 
\begin{align*}
|f(u,\ub{u},\hat x)|^2  & \lesssim_d \tau^{-d} \cosh(\rho)^{2 - d} \sum_{|\alpha| \leq \lfloor \frac{d}{2}\rfloor + 1} \int_{\Sigma_\tau} \frac{1}{\cosh(\rho)} |L^\alpha f|^2 \ \dvol_{\eta_\tau} ,\\
& \lesssim_d \tau^{2-d} \cosh(\rho)^{2-d} \sum_{|\alpha| \leq \lfloor \frac{d}{2}\rfloor + 1} \int_{\Sigma_\tau} \frac{1}{2\tau^2\cosh(\rho)} |L^\alpha f|^2 \ \dvol_{\eta_\tau},
\end{align*}
where we used the fact that $\tau$ is constant on $\Sigma_\tau$. Using Lemma \ref{lem:hardy} to control the $|\alpha| = 0$ case and Proposition \ref{prop:sedecomp} to control the right hand side by $\Energy_\tau$, we see 
\[ |f(u,\ub u, \hat x)|^2 \lesssim_d \tau^{2-d} \cosh(\rho)^{2-d} \sum_{|\alpha| \leq \lfloor \frac{d}{2}\rfloor} \Energy_\tau[L^\alpha f]^2.\] This concludes the proof for $f$.

Using again Theorem \ref{thm:globsob} with $\ell = -1$, we see
\[|Tf(u,\ub u, \hat x )|^2 \lesssim_d  \tau^{-d} \cosh(\rho)^{2 - d} \sum_{|\alpha| \leq \lfloor \frac{d}{2}\rfloor + 1} \int_{\Sigma_\tau} \frac{1}{\cosh(\rho)} |L^\alpha T f|^2 \ \dvol_{\eta_\tau}.\]
By the computations leading up to Proposition \ref{prop:commalg} (specifically the expression for $[L^i,T]$ and Remark \ref{rmk:suppcontrol}), we can estimate 
\[ L^\alpha T f| \lesssim \sum_{|\beta| \le |\alpha|} |T L^\beta f| + \sum_{|\beta| \le |\alpha| -1}\sum_{i=1}^d \frac{1}{\tau\cosh(\rho)} |L^iL^\beta f|.\] 
In the above inequality, $L^i L^\beta f \eqdef L^i f$ when $|\alpha | = 1$. The terms in the second sum can be controlled by 
\[\frac{1}{\tau} |L^iL^\beta f| \] 
because $\cosh(\rho) \ge 1$. Putting these estimates together we find that 
\begin{multline}
\sum_{|\alpha| \leq \lfloor \frac{d}{2}\rfloor + 1} \int_{\Sigma_\tau} \frac{1}{\cosh(\rho)} |L^\alpha T f|^2 \ \dvol_{\eta_\tau} \lesssim \sum_{|\alpha| \leq \lfloor \frac{d}{2}\rfloor + 1} \int_{\Sigma_\tau} \frac{1}{\cosh(\rho)} |TL^\alpha  f|^2 \\
 +  \frac{1}{\tau^2\cosh(\rho)} \sum_{i=1}^d |L^i L^\alpha f|^2 \  \dvol_{\eta_\tau}.
\end{multline}
By Proposition \ref{prop:sedecomp}, the right hand side is bounded by 
\[ \sum_{|\alpha| \leq \lfloor \frac{d}{2}\rfloor + 1} \Energy_\tau[L^\alpha f]^2,\]
as desired.
\end{proof}

\begin{rmk}\label{rmk:peeling}
	A feature of \eqref{eq:def:minkener} is its \emph{anisotropy}. The classical energy estimates of wave equations control integrals of $|\partial_t \phi|^2 + |\nabla \phi|^2$ where all components appear on equal footing. 
	Here, however, the transversal (to $\Sigma_\tau$) derivative $T\phi$ has a different weight compared to the tangential derivatives $L^i\phi$. Noting that by their definitions, $T$ has \emph{unit-sized} coefficients with expressed relative to the standard coordinates of Minkowski space. The coefficients for $L^i$ (within the light cone $\mathcal{I}^+$) have size $\approx t$. 
	Therefore an \emph{isotropic} analogue would be expected to contain integrals of $\frac{1}{t^2} (L^i\phi)^2$ along with integrals of $T\phi$. 
	Noting that $t = \tau\cosh\rho$ this indicates that an isotropic analogue would contain, instead of the integral given in \eqref{eq:def:minkener}, the integral
	\[ \int_{\Sigma_{\tau}} \frac{1}{\tau^2 \cosh(\rho)^3} \sum (L^i \phi)^2 + \frac{1}{\cosh(\rho)} (T\phi)^2 ~\dvol_{\sigmmetr_\tau}.\]
	In other words, the integral for $L^i\phi$ in the energy has a \emph{better} $\rho$ weight than would be expected from an isotropic energy, such as that controlled by the standard energy estimates. 

	This improvement reflects the fact that the energy estimate described in this section captures the \emph{peeling properties} of linear waves within the energy integral itself. It is well-known that derivatives \emph{tangential to an out-going light-cone} decay faster along the light-cone, than derivatives transverse to the light-cone. 
	As asymptotically hyperboloids approximate light-cones, we expect the same peeling property to survive. Indeed, the energy inequality \eqref{eq:engineq} shows that we can capture this in the integral sense. 
\end{rmk}

\section{A semilinear model}
\label{sect:semilinear}

Before stating and proving our main results, we will illustrate both our method of proof and the main difficulties encountered in the simpler setting of a \emph{semilinear} problem. 
Recall that the small-data global existence problem for the membrane equation \eqref{eq:membrane} in dimension $d \geq 3$ follows from a direct application of Klainerman's vector field method, after noting that the equation of motion is a quasilinear perturbation of the linear wave equation with \emph{no} quadratic nonlinearities. 
In particular, Klainerman's null condition plays no role in establishing this result. 
As indicated in Remark \ref{rmk:resonances}, the perturbation problem for simple plane-waves introduces resonant quadratic terms to which Klainerman's null condition does not directly apply. 
On the other hand, as observed in that same remark, there is a hidden null structure from which we can expect to recover some improved decay rates. 

The main difficulty however is that Klainerman's null condition is built upon the \emph{expected decay rates} corresponding to solutions to the linear wave equations with \emph{strongly localized initial data}. 
In particular, the heuristic for the null condition is based on the expectation that, for generic first derivatives of such a solution, $\partial\phi$ decays like $t^{(1-d)/2}$; while for ``tangential'' (to an outgoing null cone foliation) derivatives, the corresponding derivatives decays like $t^{-d/2}$. 

In our setting, however, one of the waves in the interaction is a simple plane-wave which \emph{does not decay at all}. 
This reduces the effectiveness of the null structure in improving decay. 
As will be shown this difficulty manifests already in the model problem to be discussed in this section.
For example, when applying the vector field method one studies the equations of motion satisfied by higher derivatives of the solution.
After commuting the equation with the Lorentz boosts, one sees that when the boost hits on $\bgf''$, we obtain a coefficient that, while still localized to $t \approx -x^1$, is growing in time. 
On an intuitive level one can interpret this as a transfer of energy from the (infinite energy) background simple plane-wave to the perturbation. 
The null structure in our context then serves to cap the rate of this energy transfer, ensuring (in our case) global existence of the perturbed solution. 

The specific semilinear model problem we consider takes \eqref{eq:geomeqphi} and drops from it the quasilinearity. 
That is to say, we consider the small-data problem for the semilinear wave equation 
\begin{equation}\label{eq:modelprob}
	\Box \phi = \bgf''(u) (\phi_{\ub{u}})^2
\end{equation}
on $\Real^{1,d}$, where $\Box$ is the usual wave operator corresponding to the Minkowski metric $\mink$. 
To approach this problem using a vector field method, one commutes \eqref{eq:modelprob} with the Lorentz boosts to derive equations of motions for higher order derivatives. 
The energy estimates for these higher order derivatives are then combined with the global Sobolev inequality to get $L^\infty$ \emph{decay} estimates for the solution. 
The main difficulty one encounters here, however, is when the vector fields hit on the background $\bgf''$. 
We have
\[ \Box L^\alpha \phi = L^\alpha(\bgf'') (\phi_{\ub{u}})^2 + \ldots,\]
where $L^\alpha(\bgf'')$ can have \emph{growing} $L^\infty$ norm. 

This potential growth of the coefficients is the main technical complication in this problem. 
The best uniform estimate we have for $L^\alpha(\bgf'')$, assuming for convenience $\bgf\in C^\infty_0$ and the initial data for $\phi$ is compactly supported, is via Lemma \ref{lem:fderest}, which gives
\[ \Box L^\alpha \phi \approx (1 + \ub{u})^{m/2} \mathbf{1}_{\{u\in [a,b]\}} \cdot (\underbrace{1 + u+\ub{u}}_{\partial_{\ub{u}}\phi})^{-d/2} \partial\phi \]
where we have made the optimistic assumption that $\phi_{\ub{u}}$ decays like $(1 + u+\ub{u})^{-d/2}$, as would be the case for a linear wave. 

At this point, \textbf{two different complications present themselves}. 
First, one may naively hope that the (higher order) energies always stay bounded, in analogy with the linear case. 
This hope is rapidly dashed when we examine the energy estimate for $|\alpha| = d$. After commuting with $d$ derivatives, we see that
\[ \Box L^\alpha \phi \approx \partial\phi \]
with no decay! Even assuming that we can prove the boundedness of the lowest order energy (which controls $\partial\phi$ in $L^2$), the best we can obtain is then that energy for $L^\alpha \phi$ grows linearly in time. 
This first difficulty can be \textbf{overcome with a modified bootstrap scheme} where the expected (polynomial in time) energy growth is incorporated into the assumptions. 
\begin{rmk}
	Several remarks are in order concerning this energy growth:
	\begin{enumerate}
		\item This growth is different from what appears in typical applications of the vector field method to nonlinear wave equations with null-condition satisfying nonlinearities in $d = 3$. In those cases, the equation takes the schematic form
			\[ \Box \phi = L\phi \ub{L}\phi \]
			where $L\phi$ is a ``good'' derivative that is expected to decay like $t^{-d/2}$ and $\ub{L}\phi$ is a ``bad'' derivative that is expected to decay like $t^{(1-d)/2}$. When considering the energy estimates for the top order derivatives, one must face the possibility of needing to control
			\[ \Box \partial^\alpha \phi = \partial^\alpha L\phi \cdot \ub{L}\phi + \ldots.\]
			To close the energy estimate, one must estimate $\partial^\alpha L\phi$ in $L^2$ and thereby bound $\ub{L}\phi$ in $L^\infty$ by $1/t$, whereupon the time integration gives a small energy growth of the top order derivatives.

			This difficulty is already largely avoided in hyperboloidal energy methods, exploiting the anisotropic inclusion of ``good'' versus ``bad'' derivatives in the energy (see Proposition \ref{prop:sedecomp} and Remark \ref{rmk:peeling}), and is \emph{not} the cause of the energy growth in our argument. 
		\item That the two energy growths are distinct can be seen in the fact that for the classical applications of the vector field method, the energy growth occurs only for the highest order derivatives used in the argument. The more derivatives one uses in the bootstrap, the more levels of energy that remain bounded. In our case, the energy growth starts appearing at a fixed (depending on the dimension $d$) level of derivatives, regardless of how many derivatives is used in the bootstrap argument. The reason for this is because there are terms in the equation that do not enjoy the boost symmetries, and every time you differentiate them with a boost, one gets another growth of $\ub{u}$. (see Lemma \ref{lem:fderest}).
		\item Similarly, this energy growth is also different from the $\mu$-degeneracy of the highest orders of energies (and the associated ``descent scheme'') that appears in the study of formation of shocks \cite{Christ2007a} (see also discussion in \cite{HoKlSW2016}). 
	\end{enumerate}
\end{rmk}

The second difficulty is more sinister. 
To close the energy estimate, and estimate $\phi_{\ub{u}}$ in $L^\infty$, we need to commute with at least $d/2$ derivatives in order to make use of Sobolev, implying that $m > d/2$. 
But then the coefficients on the right hand side are of size $(1 + \ub{u})^{-d/4+\epsilon}$, which is not integrable when $d = 2, 3, 4$.
This seemingly prevents us from even closing \emph{any} bound for $|\partial \phi|$. Take for example the case $d = 3$. 
\begin{itemize}
	\item Assuming $L^\infty$ control on $|\partial\phi|$ of the type $(1 + u + \ub{u})^\lambda$, the coefficients in the equation for $LL\phi$ grows like $(1 + \ub{u})^{1- \lambda}$, This implies that, even assuming the lowest-order energy remains bounded, the energy for $LL \phi$ grows like $(1 + u + \ub{u})^{2 - \lambda}$. 
	\item In the \emph{best} case, we expect that the growth of the $LL \phi$ energy means a weakened $L^\infty$ control on $|\partial\phi|$, to the tune of $(1 + u + \ub{u})^{2 - \lambda - 3/2}$, with the power $(-3/2)$ coming from the global Sobolev inequalities. 
	\item Thus, we see that \emph{at every iteration} one would increase the growth rate of $|\partial\phi|$ by $(1 + u + \ub{u})^{1/2}$.
\end{itemize}
To handle this difficulty, \textbf{we will make use of the hyperboloidal foliation and its associated sharp global Sobolev inequalities}. 
In particular, the anisotropy discussed in Remark \ref{rmk:peeling} allows us to exploit an additional vestige of the null structure of the membrane equation to gain, effectively, an additional $(1 + u + \ub{u})^{-1}$ decay in the most difficult terms and close the argument also in $d = 3$ and $4$. 
This is accomplished by essentially ``borrowing'' a weight from the $|\partial_{\ub{u}}\phi|$ term when we put it in $L^2$, using the fact that the term we are trying to control is also a ``good derivative'' and benefits from the anisotropic energy.  
The vestigial null structure is explained in Remark \ref{rmk:ncvestige} below. 

\begin{rmk}
	This improvement is not sufficient for the $d = 2$ case, even at the heuristic level, due to logarithmic divergences when integrating $(1 + s)^{-1}$. As the stability of plane-waves is trivial in $d = 1$ (using either the integrability of the membrane equation in this case, or via an easy modification of the arguments in \cite{Wong2017}), we have reasons to expect that the stability result also holds for $d = 2$. This turns out to be indeed the case, if we factor in the additional improvements we used in the more detailed analyses for the quasilinear problem in Section \ref{sect:commutedeq}. See also Remark \ref{rmk:semi:two}.
\end{rmk}

Note that these difficulties are essentially due to the fact that the background function $\bgf(u)$, while being a solution to the linear wave equation $\Box \bgf(u) = 0$, is not one that is associated to localized initial data. 
Hence its derivative with respect to $L^\alpha$ has worse decay rates. (In fact, it grows in time.)

Concerning this semilinear model, we will study the initial value problem for \eqref{eq:modelprob} with initial data prescribed on the hypersurface $\{y^0 = 2\}$ (here $y$ is defined as in \eqref{eq:ycoord}),
\[ \phi|_{y^0 = 2} = \phi_0, \quad \partial_{y^0} \phi |_{y^0 = 2} = \phi_1.\]
The remainder of this section is devoted to proving the following theorem. 

\begin{thm}
	Let $d \geq 3$ and assume $\bgf''(u)$ is smooth and has compact support in $u$. 
	Consider the initial value problem for \eqref{eq:modelprob} where $\phi_0$ and $\phi_1$ are smooth compactly supported functions on $B(0,1) \subset \Real^d$. 
	Let $s = d$ if $d$ is odd, and $s = d+1$ if $d$ is even. 
	Then provided $\|\phi_0\|_{H^{s+1}} + \|\phi_1\|_{H^s}$ is sufficiently small, the initial value problem has a global-in-time solution. 
\end{thm}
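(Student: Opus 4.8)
The plan is to prove global existence by a continuity argument for a polynomially growing hierarchy of hyperboloidal energies, commuting \eqref{eq:modelprob} \emph{only} with the Lorentz boosts $L^i$ and organizing the resulting inhomogeneities with the weighted vector-field calculus of Section~\ref{sect:wvfalg}. First I would reduce to the hyperboloidal setting. By local well-posedness for \eqref{eq:modelprob} in $H^{s+1}\times H^{s}$ together with finite speed of propagation, the solution emanating from data supported in $B(0,1)$ exists near $\{y^0=2\}$ and is supported in the solid forward cone $K=\{\,y^0\ge 2,\ |y|\le y^0-1\,\}$, which lies in $\mathcal I^+$, and which — when intersected with $\{y^0$ large$\}$ — is foliated by the hyperboloids $\Sigma_\tau$ with $\tau$ above a fixed threshold $\tau_0$. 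Evolving the data a bounded amount of time produces smooth, compactly supported Cauchy data on $\Sigma_{\tau_0}$, and since the boosts have bounded coefficients on the compact region swept out, $\sum_{|\alpha|\le s}\Energy_{\tau_0}[L^\alpha\phi]\lesssim\epsilon$ where $\epsilon\eqdef\|\phi_0\|_{H^{s+1}}+\|\phi_1\|_{H^s}$. It then suffices to prove an \emph{a priori} estimate on the maximal hyperboloidal interval $[\tau_0,T_\ast)$; the past evolution is treated identically after translating the vertex of the light cone to a point in the future of the data slice.

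Since each $L^i$ is Killing, $[\Box,L^i]=0$, so $\Box L^\alpha\phi=L^\alpha\bigl(\bgf''(u)\,(\partial_{\ub{u}}\phi)^2\bigr)$. Writing $\partial_{\ub{u}}=\tfrac1{\sqrt2}\bigl(1+\tfrac{y^1}{y^0}\bigr)T-\tfrac1{\sqrt2\,y^0}L^1\in\algdiff_1^{1,1}$ — that $\partial_{\ub{u}}$ carries \emph{weight one} is the vestige of the null structure of \eqref{eq:membrane} — and using the Leibniz rule, the right-hand side becomes a finite schematic sum of terms $(L^\beta\bgf'')\,(A_1\phi)(A_2\phi)$ with $|\beta|\le|\alpha|$, with $A_1,A_2\in\algdiff^{*,*}_{*}$ of weight $\ge 1$ and total degree $\le|\alpha|+2$, and with $A_1=A_2=\partial_{\ub{u}}$ when $\beta=\alpha$. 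The only dangerous term is that last one, $(L^\alpha\bgf'')(\partial_{\ub{u}}\phi)^2$, since Lemma~\ref{lem:fderest} only gives $|L^\alpha\bgf''|\lesssim(1+\ub{u})^{|\alpha|/2}\mathbf{1}_{\{u\in[a,b]\}}$. I would then set up the bootstrap: with $p=\lfloor d/2\rfloor+1$, assume on $[\tau_0,T_\ast)$ that $\Energy_\tau[L^\alpha\phi]\le C_1\epsilon$ for $|\alpha|\le p$ and $\Energy_\tau[L^\alpha\phi]\le C_1\epsilon\,\tau^{\sigma_{|\alpha|}}$ for $p<|\alpha|\le s$, where $0\le\sigma_{p+1}<\cdots<\sigma_s$ are exponents — increasing by a fixed, $d$-dependent amount at each level — to be pinned down by the estimate. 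That the growth sets in at the \emph{fixed} order $p+1$, rather than only at the top order, is the phenomenon flagged in the introduction, and reflects that $\bgf$ carries infinite energy. From these assumptions, Proposition~\ref{prop:peeling} (applied also to the pieces $T\phi$, $L^1\phi$ of $\partial_{\ub{u}}\phi$) yields pointwise decay for $\partial\phi$, $\partial_{\ub{u}}\phi$ and their low-order boosts; here $p>d/2$ guarantees that enough vector fields are available, and when $d\ge5$ there is surplus regularity, which is why the argument simplifies there.

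The heart of the matter is the energy estimate. Applying \eqref{eq:engineq} with $g=\mink$ kills the deformation-tensor term ($\lieD_T\mink=0$), leaving only $2\iint_{[\tau_0,\tau_1]}\bigl|\Box L^\alpha\phi\bigr|\,\bigl|TL^\alpha\phi\bigr|\,\dvol_{\mink}$. In each summand $(L^\beta\bgf'')(A_1\phi)(A_2\phi)$ I would place the lowest-order factor $A_j\phi$ in $L^\infty$ via Proposition~\ref{prop:peeling}, the remaining factor together with $TL^\alpha\phi$ in $L^2(\Sigma_\tau)$ via the energy \eqref{eq:def:minkener}, and use the support of $\bgf''$ to confine the spatial integral to the slab $\{u\in[a,b]\}$ — which both controls the relevant measure and, through Lemma~\ref{lem:fderest}, prevents the bad factor $(1+\ub{u})^{|\beta|/2}$ from being felt over more than a bounded $u$-range (the resonant interaction only lasts a bounded time for each wave packet). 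The decisive gain is the \emph{vestigial null structure}: the remaining $L^2$ factor is again of the form $\partial_{\ub{u}}(\,\cdot\,)$, so the anisotropy of the hyperboloidal energy (Remark~\ref{rmk:peeling}) lets one ``borrow'' a factor of $(1+u+\ub{u})^{-1}$ from it; where a bare $\phi$ rather than a derivative appears one first invokes Hardy's inequality (Lemma~\ref{lem:hardy}) to convert it into a good-derivative term. This produces an inequality of the form $\Energy_{\tau_1}[L^\alpha\phi]^2\le\Energy_{\tau_0}[L^\alpha\phi]^2+C\epsilon\int_{\tau_0}^{\tau_1}\mathcal K_{|\alpha|}(s)\,\bigl(\sum_{|\beta|\le|\alpha|}\Energy_s[L^\beta\phi]\bigr)^2\,ds$ with a kernel $\mathcal K_{|\alpha|}$ that is integrable for $|\alpha|\le p$ and grows just slowly enough for $p<|\alpha|\le s$; a Grönwall/descent argument then returns boundedness for $|\alpha|\le p$ and exactly the rate $\tau^{\sigma_{|\alpha|}}$ for the higher levels, improving $C_1\epsilon$ to $\tfrac12C_1\epsilon$ once $\epsilon$ is small. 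Hence $T_\ast=\infty$ and, by finite speed of propagation, the solution is global.

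The step I expect to be the main obstacle is this last one in the borderline dimensions $d=3$ and $d=4$: there the decay supplied by Theorem~\ref{thm:globsob} alone is \emph{not} integrable against the growing coefficient $L^\beta\bgf''$, so the argument genuinely depends on extracting the extra $(1+u+\ub{u})^{-1}$ from the anisotropic energy applied to $\partial_{\ub{u}}\phi$, and on choosing the exponents $\sigma_k$ so that the half-power losses incurred at each successive differentiation level telescope into a closed, genuinely polynomial hierarchy rather than accumulating without bound. Checking the internal consistency of this hierarchy — that the $\sigma_k$ forced by the energy estimate are no larger than the $\sigma_k$ posited in the bootstrap, simultaneously for all levels up to $s$ — is the delicate bookkeeping at the core of the proof.
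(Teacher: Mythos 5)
Your overall strategy matches the paper's: foliate by hyperboloids, commute only with the Lorentz boosts (so $[\Box,L^\alpha]=0$), decompose $\partial_{\ub u}$ via $\partial_{\ub u}=\frac1{u+\ub u}(\sqrt2\,u\,T-L^1)$ to expose the vestigial null structure, invoke the anisotropic weighting of the hyperboloidal energy together with the compact $u$-support of $\bgf''$ (Lemma~\ref{lem:fderest}) to tame the $L^\alpha\bgf''$ growth, and close a polynomially growing energy hierarchy by bootstrap. This is exactly the route taken in Section~\ref{sect:semilinear}, down to the a priori estimate \eqref{eq:apestsemi} and the dimension-dependent tables.

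There is, however, a genuine error in your bootstrap hierarchy. You posit that $\Energy_\tau[L^\alpha\phi]\le C_1\epsilon$ stays \emph{bounded} for $|\alpha|\le p=\lfloor d/2\rfloor+1$ and that growth only sets in at order $p+1$. This is not where the growth starts. Examine the a priori estimate \eqref{eq:apestsemi}: the worst term has $\ell_0=k$, $\ell_1=\ell_2=0$ (all boosts landing on $\bgf''$), contributing a kernel $s^{k-d}$ multiplied by $\mathfrak{E}_k\cdot\mathfrak{E}_{\lfloor d/2\rfloor+1}\cdot\mathfrak{E}_0$. This becomes borderline non-integrable when $k=d-1$, not when $k=p+1$. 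For $d=3$ and $d=4$ one has $p=d-1$, so you are asserting boundedness of exactly the level $\mathfrak{E}_{d-1}$ that cannot remain bounded: with your assumption the integrand at $k=d-1$, $\ell_0=d-1$, $\ell_1=\ell_2=0$ is $\approx s^{-1}\cdot(C_1\epsilon)^3$, and the Gr\"onwall step gives $\mathfrak{E}_{d-1}(\tau)^2\gtrsim\epsilon^3\ln\tau$ — contradicting boundedness, so the continuity argument never closes. The paper's hierarchy (Tables~\ref{tbl:Semi4weights} and~\ref{tbl:Semi3weights}) indeed assigns $\mathfrak{E}_{d-1}\lesssim\delta\tau^{\gamma}$ in those dimensions. (For $d\ge7$ your threshold errs in the opposite, harmless direction: you allow growth earlier than necessary.) You should replace ``growth sets in at order $p+1$'' with ``growth sets in at order $d-1$''. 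Relatedly, your appeal to Hardy's inequality to handle ``a bare $\phi$'' is not needed for \eqref{eq:modelprob}: after commuting with boosts the inhomogeneity $\bgf''(\partial_{\ub u}\phi)^2$ is always differentiated, and indeed the paper never invokes Lemma~\ref{lem:hardy} in the semilinear section; this is a small sign that the schematic decomposition of the nonlinearity hasn't been tracked all the way through.
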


\subsection{Preliminaries}
Using the standard local existence theorem with finite-speed of propagation we can assume the solution exists up to at least $\Sigma_2$.
Furthermore, by finite speed of propagation, the solution must vanish when 
\[ \sqrt{\sum_{i = 1}^d |y^i|^2} > |y^0 - 2| + 1.\]
In particular, this implies
\begin{equation}\label{eq:suppcond}
	\sqrt{2}(u+\ub{u}) \leq \tau^2 + 1
\end{equation}
on the support of $\phi$. 

By the blow-up criterion for wave equations, it suffices to show $\|\phi\|_{W^{1,\infty}(\Sigma_\tau)} < \infty$ for every $\tau\in (2,\infty)$. 
The general approach, which we will take also for studying the quasilinear problem, is that of a bootstrap argument. 
\begin{enumerate}
	\item We will assume that, up to time $\tau_{\text{max}} > 2$, that the energy $\Energy_\tau$ of the solution $\phi$ and its derivatives $L^\alpha\phi$ verify certain bounds. 
	\item Using Proposition \ref{prop:peeling}, this gives $L^\infty$ bounds on $\phi$, and its derivatives of the form $L^\alpha \phi$ and $TL^\alpha\phi$. 
	\item We can then estimate the nonlinearity using these $L^\infty$ estimates, which we then feed back into the energy inequality \eqref{eq:engineq} to get an \emph{updated} control on $\Energy_\tau$ for all $\tau \in [2,\tau_{\text{max}}]$. 
	\item Finally, show for sufficiently small initial data sizes, the updated control \emph{improves} the original control, whereupon by the method of continuity the original bounds on $\Energy_\tau$ must hold for all $\tau \geq 2$, implying the desired global existence. 
\end{enumerate}
Before implementing the bootstrap in the following two sections (one each for the cases $d$ being odd or even), we record first basic pointwise bounds on the nonlinearity. 
For estimating the nonlinearity, we observe that 
\begin{equation}
	\partial_{\ub{u}} = \frac{\sqrt{2}u}{u+\ub{u}} T - \frac{1}{u+\ub{u}} L^1 = \frac{1}{u+\ub{u}} (\sqrt{2} u T - L^1).
\end{equation}
This allows us to decompose 
\[ \bgf''(u) (\phi_{\ub{u}})^2 = \frac{1}{(u+\ub{u})^2} \left[ A(u) (L^1 \phi)^2 + B(u) L^1 \phi \cdot T\phi + C(u) (T\phi)^2 \right]\]
where $A, B, C$ are all compactly supported smooth functions of $u$. 
By Proposition \ref{prop:commalg} we can rewrite $L^\alpha T\phi$ as
\[ L^\alpha T\phi = \sum_{|\beta| \leq |\alpha|} \frac{1}{u+\ub{u}} c_\beta L^\beta \phi + \sum_{|\gamma| \leq |\alpha|} c'_\gamma T L^\gamma \phi \]
where $c_\beta, c'_\gamma\in \sweight_0$ and hence are bounded. 
Additionally on the region $\tau \geq 2$ that we are interested in, $u+\ub{u}$ is bounded from below. 
So finally using Lemma \ref{lem:fderest} on the coefficients $A, B, C$ above, we obtain the following uniform pointwise bound on the region $\{\tau \geq 2\}$
\begin{multline}\label{eq:semiLbd}
	\Big| L^\alpha \big[ \bgf''(u) (\phi_{\ub{u}})^2 \big]\Big| \lesssim \sum_{k + \ell_1 + \ell_2 \leq |\alpha|} (1 + \ub{u})^{\frac{k}2 - 2}\cdot \mathbf{1}_{\{u\in \mathrm{supp}\; \bgf''\}} \cdot \\
	  \left| (L^{\leq \ell_1 + 1} \phi) (L^{\leq\ell_2 + 1}\phi) + (T L^{\leq \ell_1}\phi)(T L^{\leq \ell_2}\phi) + (L^{\leq \ell_1 + 1}\phi) (T L^{\leq \ell_2} \phi) \right|.
\end{multline}
\begin{notn}
	Here we denote schematically by $L^{\leq \ell}\phi$ terms of the form $L^\beta\phi$ with $\beta$ an $m$-tuple with $m \leq \ell$. 
\end{notn}

By Proposition \ref{prop:peeling} we can replace the term with the smallest of $\ell_1, \ell_2$ using an energy integral:
\begin{align*}
	\left| (L^{\leq \ell_1 + 1} \phi) (L^{\leq\ell_2 + 1}\phi) \right| &\lesssim \tau^{1-\frac{d}2} \cosh(\rho)^{1-\frac{d}{2}} \Energy_{\tau}[L^{\leq \ell_1 + \lfloor \frac{d}2\rfloor + 1}\phi] \cdot \left| L^{\leq\ell_2 + 1}\phi\right|, \\
	\left| (L^{\leq \ell_1 + 1} \phi) (T L^{\leq\ell_2}\phi) \right| &\lesssim \tau^{1-\frac{d}2} \cosh(\rho)^{1-\frac{d}{2}} \Energy_{\tau}[L^{\leq \ell_1 + \lfloor \frac{d}2\rfloor + 1}\phi] \cdot \left| TL^{\leq\ell_2}\phi\right|, \\
	\left| (TL^{\leq \ell_1} \phi) (L^{\leq\ell_2 + 1}\phi) \right| &\lesssim \tau^{-\frac{d}2} \cosh(\rho)^{1-\frac{d}{2}} \Energy_{\tau}[L^{\leq \ell_1 + \lfloor \frac{d}2\rfloor + 1}\phi] \cdot \left| L^{\leq\ell_2 + 1}\phi\right|, \\
	\left| (TL^{\leq \ell_1} \phi) (TL^{\leq\ell_2}\phi) \right| &\lesssim \tau^{-\frac{d}2} \cosh(\rho)^{1-\frac{d}{2}} \Energy_{\tau}[L^{\leq \ell_1 + \lfloor \frac{d}2\rfloor + 1}\phi] \cdot \left| TL^{\leq\ell_2 }\phi\right|. 
\end{align*}
This allows us to condense \eqref{eq:semiLbd} as
\begin{multline}\label{eq:semiLbd2}
	\Big| L^\alpha \big[ \bgf''(u) (\phi_{\ub{u}})^2 \big]\Big| \lesssim \sum_{\substack{k + \ell_1 + \ell_2 \leq |\alpha|\\ \ell_1 \leq \ell_2}} (1 + \ub{u})^{\frac{k}2 - 2}\cdot \mathbf{1}_{\{u\in \mathrm{supp}\; \bgf''\}} \cdot \\
	  (\underbrace{1 + \ub{u}}_{\mathclap{\approx \tau\cosh(\rho)}})^{2 - \frac{d}{2}} \cdot \Energy_{\tau}[L^{\leq \ell_1 + \lfloor \frac{d}2 \rfloor + 1}\phi]\left[ \frac{1}{\tau\cosh(\rho)} |L^{\leq \ell_2 + 1} \phi| + \frac{1}{\cosh(\rho)}|T L^{\leq \ell_2}\phi|\right].
\end{multline}
Here we used that $\tau\cosh(\rho) = \frac{1}{\sqrt{2}}(u + \ub{u}) \approx (1 + \ub{u})$ using the support properties of $\bgf''$. 
Next we note that $2u\ub{u} \geq \tau^2$ in $\mathcal{I}^+$. On the support of $\bgf''$ this means $\ub{u} \gtrsim \tau^2$. 
On the other hand, from \eqref{eq:suppcond} we also get $\ub{u} \lesssim 1 + \tau^2$. 
This allows us to replace $(1 + \ub{u})$ by $(1 + \tau)^2$ in \eqref{eq:semiLbd2}.

Observe next that since $L^i$ is Killing with respect to $\mink$, we have that firstly $\lieD_T\mink = 0$ and secondly
\[ \Big|\Box L^\alpha \phi\Big| \leq 	\Big| L^\alpha \big[ \bgf''(u) (\phi_{\ub{u}})^2 \big]\Big|.\]
So from the energy identity \eqref{eq:engineq} we get
\[
	\Energy_{\tau_1}[L^\alpha \phi]^2 - \Energy_{\tau_0}[L^\alpha \phi]^2 \lesssim \iint_{\tau\in [\tau_0, \tau_1]} |\Box L^\alpha \phi|\cdot |T L^\alpha \phi| ~\dvol_{\mink}.
\]
Applying \eqref{eq:semiLbd2} we finally arrive at our fundamental \textit{a priori} estimate
\begin{multline*}
	\Energy_{\tau_1}[L^\alpha \phi]^2 - \Energy_{\tau_0}[L^\alpha \phi]^2 \lesssim\\
	\int_{\tau_0}^{\tau_1} \sum_{\substack{k + \ell_1 + \ell_2 \leq |\alpha|\\ \ell_1 \leq \ell_2}}(1 + \tau)^{k-d}\cdot \Energy_{\tau}[L^\alpha \phi]\cdot \Energy_{\tau}[L^{\leq \ell_2} \phi]\cdot \Energy_{\tau}[L^{\leq \ell_1 + \lfloor \frac{d}2\rfloor + 1}\phi] ~\D\tau.
\end{multline*}

To simplify notation, let us write
\begin{equation}
	\mathfrak{E}_k(\tau) = \sup_{\sigma\in [2,\tau]} \Energy_\sigma[L^{\leq k}\phi].
\end{equation}
Our \textit{a priori} estimate reads
\begin{equation}\label{eq:apestsemi}
	\mathfrak{E}_k(\tau)^2 - \mathfrak{E}_k(2)^2 \lesssim \sum_{\substack{\ell_0 + \ell_1 + \ell_2 = k \\ \ell_1 \leq \ell_2}} \int_2^\tau s^{\ell_0 - d} \mathfrak{E}_k \mathfrak{E}_{\ell_1 + \lfloor \frac{d}{2}\rfloor + 1} \mathfrak{E}_{\ell_2} \D{s}.
\end{equation}
In the remainder of this section we will discuss the bootstrap scheme that allows us to control $\mathfrak{E}_k$, for all $k\leq d+1$ when $d$ is even and all $k \leq d$ when $d$ is odd, for all time $\tau \geq 2$. 
Note that the implicit constant in \eqref{eq:apestsemi} depends only on the dimension $d$, the order $k$ of differentiation, and properties of the background function $\bgf$, and is in particular independent of $\phi$.  

\subsection{Bootstrap for \texorpdfstring{$d \geq 6$}{d >= 6} even}
When $d \geq 6$ is even, we will denote by $m$ the value $d/2$. Note that $m \geq 3$. 
We will assume a uniform bound on the initial data
\begin{equation}
	\mathfrak{E}_k(2) \leq \epsilon, \quad k \leq d + 1.
\end{equation}
Our bootstrap assumption is that for some $\delta  > \epsilon$ and for all $2 \leq \tau \leq \tilde{\tau}$, 
\begin{equation}\label{eq:Semi6PEbootstrap}
	\mathfrak{E}_{k}(\tau) \leq 
	\begin{cases}
		\delta & k \leq d - 2 \\
		\delta \tau^{k - (d - 1)} \ln(\tau) & d-1 \leq k \leq d+1
	\end{cases}.
\end{equation}
We note that under \eqref{eq:apestsemi} this system is closed: if $\ell_0 + \ell_1 + \ell_2 \leq d+1$ and $\ell_1 \leq \ell_2$, then $\ell_1 \leq m$. This means that $\ell_1 + \lfloor d/2\rfloor + 1 \leq 2m+1 = d+1$. 
Our goal is to show that the bootstrap assumptions \eqref{eq:Semi6PEbootstrap} can be used to prove improved versions of themselves, under a smallness assumption on $\delta$ and $\epsilon$. 

Under our bootstrap assumptions, we can expression every term of the form 
\[ 
	s^{\ell_0 - d} \mathfrak{E}_k(s) \mathfrak{E}_{\ell_1 + m + 1}(s) \mathfrak{E}_{\ell_2}(s) = w_{\ell_0, \ell_1, \ell_2}(s) \delta^3,
\]
noting that $\ell_1 \leq \ell_2$ by assumption and $\ell_0 + \ell_1 + \ell_2 = k \leq d+1$. 
Observing that at most \emph{one} of $\ell_0$, $\ell_1 + m + 1$, and $\ell_2$ can be $\geq d$ under these conditions, we tabulate upper bounds for the weight functions $w_{\ell_0, \ell_1, \ell_2}(s)$ in Table \ref{tbl:Semi6Pweights}.
\begin{table}[b]
	\caption{\label{tbl:Semi6Pweights} ($d \geq 6$, even) List of admissible $\ell_0, \ell_1, \ell_2$ values as well as the corresponding upper bounds for $w_{\ell_0, \ell_1, \ell_2}$. The value of ``---'' means any value compatible with the prescribed columns. The shaded rows are those with non-integrable upper bounds for $w_{\ell_0, \ell_1, \ell_2}$.}
	\begin{tabular}{cccccl}
		\toprule
		$k$ & $\ell_0$ & $\ell_1$ & $\ell_2$ & $w_{\ell_0, \ell_1, \ell_2}(s) \leq$ & Comment\\
		\midrule
		$\leq d-2$ &  --- & $< m-2$ & --- & $s^{-2}$ & $\implies\ell_0 \leq k$.\\
		$\leq d-2$ & --- & $m-2$ & --- & $s^{2-d} \ln(s)$ & $\implies \ell_0 \leq 2$. \\
		$\leq d-2$ & --- & $m-1$ & --- & $s^{-d} \ln(s)$ & $\implies \ell_0 =0$.\\
		\addlinespace
		$d-1$ & $\leq d-2$ & $\leq m-3$ & $\leq d-2$ & $s^{-2}\ln(s)$ \\
		$d-1$ & --- & $m-2$ & --- & $s^{3-d}\ln(s)^2$ & $\implies \ell_2 \leq m+1, \ell_0 \leq 3$\\
		$d-1$ & --- & $m-1$ & --- & $s^{1-d}\ln(s)^2$ & $\implies \ell_0 \leq 1$\\
		$d-1$ & --- & --- & $d-1$ & $s^{-d}\ln(s)^2$ \\
		\rowcolor[gray]{0.8} $d-1$ & $d-1$ & --- & --- & $s^{-1}\ln(s)$ &\\
		\addlinespace
		\rowcolor[gray]{0.8} $d$ & $\leq d-2$ & $\leq m-3$ & $\leq d-2$ & $s^{-1}\ln(s)$ & \\
		$d$ & --- & $m-2$ & --- & $s^{4-d}\ln(s)^3$ & $\implies \ell_2 \leq m+2, \ell_0 \leq 4$\\
		$d$ & --- & $m,m-1$ & --- & $s^{2-d}\ln(s)^2$ & $\implies \ell_0 \leq 2$\\
		$d$ & --- & --- & $d,d-1$ & $s^{2-d}\ln(s)^2$ \\
		\rowcolor[gray]{0.8} $d$ & $d,d-1$ &---& ---& $s \ln(s)$ & \\
		\addlinespace
		\rowcolor[gray]{0.8} $d+1$ & $\leq d-2$ & $\leq m-3$ & $\leq d-2$ & $\ln(s)$ & \\
		\rowcolor[gray]{0.8} $d+1$ & --- & $m-2$ & --- & $s^{5-d}\ln(s)^3$ & $\implies \ell_2 \leq m+3, \ell_0 \leq 5$\\
		$d+1$ & --- & $m,m-1$ & --- & $s^{3-d}\ln(s)^3$ & $\implies \ell_2 \leq m+2, \ell_0 \leq 3$\\
		$d+1$ & --- & --- & $d,d\pm1$ & $s^{4-d}\ln(s)^2$ \\
		\rowcolor[gray]{0.8} $d$ & $d,d\pm1$ &---& ---& $s^3 \ln(s)$ & \\
		\bottomrule
	\end{tabular}
\end{table}
From this table, we see immediately that $\mathfrak{E}_{k}(\tau)^2 - \mathfrak{E}_{k}(2)^2 \lesssim \delta^3$ whenever $k \leq d-2$. 
Furthermore, using that for $p > -1$
\[ \int s^p \ln(s) ~\D{s} = \frac{1}{p+1} s^{p+1} \ln(s) - \frac{1}{(p+1)^2} s^{p+1} \lesssim s^{p+1} \ln(s)^2,\]
and
\[ \int s^{-1} \ln(s) ~\D{s} = \frac12 \ln(s)^2, \]
we conclude that for $k = d-1, d, d+1$
\[ \mathfrak{E}_k(\tau)^2 - \mathfrak{E}_k(2)^2 \lesssim \delta^3 \tau^{2(k-(d-1))} \ln(\tau)^2.\]
Thus for $\delta$ sufficiently small (depending on the implicit constants in the inequalities above), we have that our bootstrap assumptions \eqref{eq:Semi6PEbootstrap} together with initial data assumptions implies
\begin{equation}
	\mathfrak{E}_{k}(\tau) \leq 
	\begin{cases}
		\sqrt{\epsilon^2 + \frac12\delta^2} & k \leq d - 2 \\
		\sqrt{\epsilon^2 + \frac12 \delta^2 \tau^{2(k - (d - 1))} \ln(\tau)^2} & d-1 \leq k \leq d+1
	\end{cases}.
\end{equation}
By choosing $\epsilon$ sufficiently small relative to $\delta$, we can guarantee
\begin{equation}
	\mathfrak{E}_{k}(\tau) \leq 
	\begin{cases}
		\sqrt{\frac34} \delta & k \leq d - 2 \\
		\sqrt{\frac34} \delta \tau^{k - (d - 1)} \ln(\tau) & d-1 \leq k \leq d+1
	\end{cases},
\end{equation}
thereby closing the bootstrap and proving global existence. 

\subsection{Bootstrap for \texorpdfstring{$d \geq 5$}{d >= 5} odd}
When $d \geq 5$ odd, we will take our bootstrap assumption to be
\begin{equation}
	\mathfrak{E}_{k}(\tau) \leq 
	\begin{cases}
		\delta & k \leq d - 2 \\
		\delta \tau^{k - (d - 1)} \ln(\tau) & k = d-1, d
	\end{cases}.
\end{equation}
That we can close with one fewer derivative is due to $\lfloor d/2\rfloor < d/2$ in this case. 
Define $m = \lfloor d/2 \rfloor$ for convenience; note that $2m = d-1$. 
By our assumption then $\ell_1 \leq \ell_2 \implies \ell_1 \leq m$, and hence $\ell_1 + m + 1 \leq d$, allowing the system to close. 
\begin{table}[b]
	\caption{\label{tbl:Semi5Pweights} ($d \geq 5$, odd) List of admissible $\ell_0, \ell_1, \ell_2$ values as well as the corresponding upper bounds for $w_{\ell_0, \ell_1, \ell_2}$. The value of ``---'' means any value compatible with the prescribed columns. The shaded rows are those with non-integrable upper bounds for $w_{\ell_0, \ell_1, \ell_2}$.}
	\begin{tabular}{cccccl}
		\toprule
		$k$ & $\ell_0$ & $\ell_1$ & $\ell_2$ & $w_{\ell_0, \ell_1, \ell_2}(s) \leq$ & Comment\\
		\midrule
		$\leq d-2$ &  --- & $\leq m-2$ & --- & $s^{-2}$ & $\implies\ell_0 \leq k$.\\
		$\leq d-2$ & --- & $m-1$ & --- & $s^{1-d} \ln(s)$ & $\implies \ell_0 \leq 1$.\\
		\addlinespace
		$d-1$ & $\leq d-2$ & $\leq m-2$ & $\leq d-2$ & $s^{-2}\ln(s)$ \\
		$d-1$ & --- & $m-1$ & --- & $s^{2-d}\ln(s)^2$ & $\implies \ell_0 \leq 2$\\
		$d-1$ & --- & --- & $d-1$ & $s^{-d}\ln(s)^2$ \\
		\rowcolor[gray]{0.8} $d-1$ & $d-1$ & --- & --- & $s^{-1}\ln(s)$ &\\
		\addlinespace
		\rowcolor[gray]{0.8} $d$ & $\leq d-2$ & $\leq m-2$ & $\leq d-2$ & $s^{-1}\ln(s)$ & \\
		$d$ & --- & $m,m-1$ & --- & $s^{4-d}\ln(s)^3$ & $\implies \ell_0 \leq 3$\\
		$d$ & --- & --- & $d,d-1$ & $s^{2-d}\ln(s)^2$ \\
		\rowcolor[gray]{0.8} $d$ & $d,d-1$ &---& ---& $s \ln(s)$ & \\
		\bottomrule
	\end{tabular}
\end{table}
The bootstrap argument here is largely similar to the case $d \geq 6$ even. 
In Table \ref{tbl:Semi5Pweights} we record upper bounds for the weight functions $w_{\ell_0, \ell_1, \ell_2}(s)$, and omit the straightforward remainder of arguments.

\subsection{Bootstrap for \texorpdfstring{$d = 4$}{d = 4}} 
We will assume a uniform bound on the initial data
\begin{equation}
	\mathfrak{E}_{k}(2) \leq \epsilon, \quad k \leq 5,
\end{equation}
with an additional bootstrap assumption for some $\delta > \epsilon$
\begin{equation}\label{eq:semidevenbootstrap}
	\mathfrak{E}_{k}(\tau) \leq 
	\begin{cases}
		\delta & k \leq 2 \\
		\delta \tau^{k - 3  + \gamma} & 3 \leq k \leq 5
	\end{cases}.
\end{equation}
The number $\gamma$ is assumed to be $\ll 1$ and arbitrary; in particular we will throughout take $\gamma < \frac13$.
The smallness of $\gamma$ will impact the smallness of the initial data allowed: the smaller the $\gamma$ the smaller the initial data needs to be. We consider $\gamma$ as fixed once and for all. 

We argue similarly to the case when $d \geq 6$, and record in Table \ref{tbl:Semi4weights} the corresponding weight functions $w_{\ell_0, \ell_1, \ell_2}$. 
Note, however, an additional complication arises since $d/2 + 1 = 3 = d -1$ in this setting (which is why instead of a logarithmic growth of energy $\mathfrak{E}_{d-1}$, we see a small polynomial growth). 

\begin{table}[b]
	\caption{\label{tbl:Semi4weights} ($d = 4$) List of admissible $\ell_0, \ell_1, \ell_2$ values as well as the corresponding upper bounds for $w_{\ell_0, \ell_1, \ell_2}$. The value of ``---'' means any value compatible with the prescribed columns. The shaded rows are those with non-integrable upper bounds for $w_{\ell_0, \ell_1, \ell_2}$. (Recall that $3\gamma < 1$ by fiat.)}
	\begin{tabular}{ccccc}
		\toprule
		$k$ & $\ell_0$ & $\ell_1$ & $\ell_2$ & $w_{\ell_0, \ell_1, \ell_2}(s) \leq$\\
		\midrule
		$\leq 2$ &  --- & $0$ & --- & $s^{\gamma-2}$ \\
		$2$ & 0 & $1$ & 1 & $s^{\gamma -3} $ \\
		\addlinespace
		$3$ & $\leq 2$ & $0$ & $\leq 2$ & $s^{2\gamma-2}$  \\
		$3$ & --- & $1$ & --- & $s^{2\gamma-2}$ \\
		$3$ & 0 & 0 & $3$ & $s^{3\gamma-4}$ \\
		\rowcolor[gray]{0.8} $3$ & $3$ & 0 & 0 & $s^{2\gamma-1}$ \\
		\addlinespace
		\rowcolor[gray]{0.8} $4$ & $4$ & $0$ & $0$ & $s^{2\gamma+1}$  \\
		\rowcolor[gray]{0.8} $4$ & --- & --- & 1 & $s^{2\gamma}$ \\
		\rowcolor[gray]{0.8} $4$ & --- & --- & 2 & $s^{2\gamma-1}$ \\
		$4$ & --- & --- & 3, 4 & $s^{3\gamma-2}$\\
		\addlinespace
		\rowcolor[gray]{0.8} $5$ & $5$ & 0 & 0 & $s^{2\gamma+3}$  \\
		\rowcolor[gray]{0.8} $5$ & --- & --- & 1 & $s^{2\gamma+2}$ \\
		\rowcolor[gray]{0.8} $5$ & --- & --- & 2 & $s^{2\gamma+1}$\\
		\rowcolor[gray]{0.8} 5 & --- & --- & 3,4,5 & $s^{3\gamma}$ \\
		\bottomrule
	\end{tabular}
\end{table}

Based on the weights derived in the table, we see clearly that, by \eqref{eq:apestsemi} we have
\[ \mathfrak{E}_k(\tau)^2 - \mathfrak{E}_k(2)^2 \lesssim \begin{cases}
	\delta^3 & k \leq 2\\
	\delta^3 \tau^{2\gamma + 2k - 6} & k = 3, 4, 5\end{cases} \]
and hence taking $\delta$ sufficiently small and $\epsilon$ even sufficiently smaller will allow us to close the bootstrap and obtain global existence. 

\begin{rmk}
	Applying Proposition \ref{prop:peeling} we see that the corresponding solution has the following decay rates:
	\begin{align*}
		|\phi| &\lesssim (y^0)^{-1}, \\
		|L^i \phi| &\lesssim (y^0)^{\gamma - 1}, &|L^i L^j \phi| &\lesssim (y_0)^{\gamma},  \\
		|T \phi| &\lesssim (y^0)^{\gamma - 1} \tau^{-1}, &|T L^i \phi| &\lesssim (y^0)^{\gamma} \tau^{-1}. 
	\end{align*}
	The difference between the decay rate for $|\phi|$ and the expected $(y^0)^{-3/2}$ is due to our not using the Morawetz $K$ multiplier (see \cite{Wong2017p}) and purely technical.
	The remaining modifications are due to the equation.
	We see at the first derivative level the decay rates shown are modified from the standard linear rate by $(y^0)^\gamma$, while at the second derivative level the decay rates are worse by a factor of $(y^0)^{\gamma + 1}$. (For linear waves in $d = 4$,  $|L^i L^j \phi|$ should decay like $(y^0)^{-1}$.) This worsened decay is a consequence of the background $\bgf''$ that appears in the equation. 
\end{rmk}

\begin{rmk}
	Notice that we do not make use of fractional Sobolev spaces. In the integer setting, to close the $L^2$--$L^\infty$ Sobolev estimate, in 4 dimensions we need to take 3 derivatives. Returning to the schematics described in the introduction of this section, we expect the equation satisfied by $L^{\leq 3}\phi$ to have a right hand side growing like $(1 + \ub{u})^{-1/2}$. 
	Our bootstrap assumptions (as well as was shown in Table \ref{tbl:Semi4weights}) indicate, on the other hand, that the inhomogeneity can take a coefficient growing like $(1 + \ub{u})^{-1 + \epsilon}$ (remember that $\gamma < \frac13$ is fixed and arbitrary). This gain of effectively a power of $1/2$ is due to our use of an anisotropic energy (see Remark \ref{rmk:peeling}) and that on the support of $\bgf''$ the derivative $\partial_{\ub{u}}$ is well-approximated by a ``tangential derivative''. 
\end{rmk}

\subsection{Bootstrap for \texorpdfstring{$d = 3$}{d = 3}}
We close this section by recording the bootstrap argument for $d = 3$. Here the bootstrap assumptions will be taken to be
\begin{equation}
	\mathfrak{E}_{k}(\tau) \leq 
	\begin{cases}
		\delta & k = 0,1 \\
		\delta \tau^{k - 2 + \gamma} & k = 2,3
	\end{cases}.
\end{equation}
Here again $\gamma \ll 1$ is fixed to be $<\frac13$. The weight bounds are shown in Table \ref{tbl:Semi3weights}. 
\begin{table}[b]
	\caption{\label{tbl:Semi3weights} ($d = 3$) List of admissible $\ell_0, \ell_1, \ell_2$ values as well as the corresponding upper bounds for $w_{\ell_0, \ell_1, \ell_2}$. The value of ``---'' means any value compatible with the prescribed columns. The shaded rows are those with non-integrable upper bounds for $w_{\ell_0, \ell_1, \ell_2}$. (Recall that $3\gamma < 1$ by fiat.)}
	\begin{tabular}{ccccc}
		\toprule
		$k$ & $\ell_0$ & $\ell_1$ & $\ell_2$ & $w_{\ell_0, \ell_1, \ell_2}(s) \leq$\\
		\midrule
		$\leq 1$ &  --- & --- & --- & $s^{\gamma-2}$ \\
		\addlinespace
		\rowcolor[gray]{0.8} $2$ & 2 & 0 & 0 & $s^{2\gamma - 1}$  \\
		$2$ & --- & --- & 1 & $s^{2\gamma-2}$ \\
		$2$ & 0 & 0 & $2$ & $s^{3\gamma-3}$ \\
		\addlinespace
		\rowcolor[gray]{0.8} $3$ & $3$ & $0$ & $0$ & $s^{2\gamma+1}$  \\
		\rowcolor[gray]{0.8} $3$ & --- & --- & 1 & $s^{2\gamma}$ \\
		\rowcolor[gray]{0.8} $3$ & --- & --- & 2,3 & $s^{3\gamma-1}$ \\
		\bottomrule
	\end{tabular}
\end{table}
Arguing similarly to the case $d = 4$ we see that the bootstrap assumptions imply
\[ \mathfrak{E}_k(\tau)^2 - \mathfrak{E}_k(2)^2 \lesssim \begin{cases}
	\delta^3 & k \leq 1\\
	\delta^3 \tau^{2\gamma + 2k - 4} & k = 2,3\end{cases} \]
and hence for sufficiently small $\delta$ and $\epsilon$, the bootstrap argument closes and we have global existence. 

For convenience we record here the corresponding $L^\infty$ decay rates relative to the $y$ coordinates. 
These can be obtained by applying Proposition \ref{prop:peeling} to the bootstrap assumptions above. 
	\begin{align*}
		|\phi| &\lesssim (y^0)^{-1/2}, \\
		|L^i \phi| &\lesssim (y^0)^{\gamma - 1/2}, &|L^i L^j \phi| &\lesssim (y_0)^{\gamma+1/2}, \\
		|T \phi| &\lesssim (y^0)^{\gamma - 1/2} \tau^{-1},  &|T L^i \phi| &\lesssim (y^0)^{\gamma+1/2} \tau^{-1}.
	\end{align*}

\begin{rmk}
	An examination of Tables \ref{tbl:Semi6Pweights}, \ref{tbl:Semi5Pweights}, \ref{tbl:Semi4weights}, and \ref{tbl:Semi3weights} shows that, exactly as discussed in the introduction to this section, the nonlinear terms that cause the main difficulty are those where the commutator vector fields hit \emph{entirely} on the background plane-wave $\bgf''$. 
	This shows that even if we start by considering initial data with higher degree of regularity, the loss of decay will always appear in the energy $\mathfrak{E}_{k}$ starting from $k = d-1$. 
\end{rmk}

\begin{rmk}
	In the arguments given above, when $d$ is odd we only commuted with up to $d$ vector fields, and when $d$ is even we used $d+1$ vector fields. 
	It is fairly straightforward to check, in fact, that for initial data of higher regularity, the higher regularity is preserved in the solution. However, for each additional derivative the energy growth speeds up by another factor of $\tau$. 
	So for example, in dimension $d = 3$ the higher energy $\mathfrak{E}_{11}(\tau)$ will have controlled growth like $\tau^{9+\gamma}$ in our bootstrap scheme. 
\end{rmk}

\section{Commuted equations}
\label{sect:commutedeq}
We now return to the membrane equation. 
As discussed in Section \ref{sect:perturb}, to handle the quasilinearity it is convenient to consider not just \eqref{eq:geomeqphi} but also the prolonged system \eqref{eq:lagrange3} for its first derivatives. 
As seen in Section \ref{sect:semilinear} previously, we will prefer to work with the weighted vector field derivatives $L^i\phi$ instead of the coordinate partials $\partial_\lambda \phi$. 
In this section we will first write down the corresponding propagation equations for $L^i\phi$. 

While the arguments in Section \ref{sect:semilinear} sums up neatly our approach toward the semilinear inhomogeneity in the equation, the quasilinear nature of \eqref{eq:geomeqphi} introduces additional complications. 
Whereas in the semilinear case the commutation relations $[L^i, \Box_\mink] = 0$ hold, in the quasilinear case $[L^i, \Box_{\gmetr}]$ or $[L^i, \Box_{\confmetr}]$ are generally non-vanishing second order differential operators, whose coefficients depend on the unknown $\phi$ itself. 
In the second part of this section we perform these basic commutation computations and systematically record the additional terms that arise which would also need to be controlled. 

In the final part of this section, we give a statement of our main stability theorem for simple plane-wave solutions to the membrane equation. 
We will state and prove our theorem in the most critical case $d = 3$. 
Returning to the results of Section \ref{sect:semilinear}, we see that when $d \geq 5$ the solution $\phi$ to the semilinear equation is such that $\phi$ and its first order weighted derivatives $L^i\phi, T\phi$ all enjoy pointwise decay at rates identical to the solution to the linear wave equation. 
For the corresponding quasilinear problem the dynamical metric $\gmetr$ also has fast decay toward $\mink$, and the quasilinearity poses almost no additional complications compared to the semilinear case. 

As already discussed in the introduction to Section \ref{sect:semilinear}, in lower spatial dimensions even the semilinearity causes additional difficulties compared to $d \geq 5$; this requires, in particular, that the decay rates of even the lowest order derivatives $L^i\phi$ and $T\phi$ be modified from their expected linear rates. 
In the quasilinear setting, this causes \emph{additional complications}. 
In three dimensions, in particular, the appearance of terms of the form 
\[  \bgf''(u) \phi \partial^2_{\ub{u}\ub{u}}\phi \]
in \eqref{eq:lagrange3} is potentially troublesome. Based on purely the \emph{linear} peeling estimates, which follows from applying Proposition \ref{prop:peeling} to a solution of the linear wave equation, and which would give (on the support of $\bgf''$)
\[    |\phi| \lesssim (y^0)^{-1/2}, \quad |\partial_{\ub{u}} \phi| \lesssim (y^0)^{-3/2}, \quad |\partial^2_{\ub{u}\ub{u}} \phi | \lesssim (y^0)^{-5/2}, \]
one may naively expect that $\bgf''(u) \phi \partial^2_{\ub{u}\ub{u}}\phi$ has similar decay properties as the semilinear nonlinearity $\bgf''(u) (\phi_{\ub{u}})^2$ that we already treated. 
However, if we instead examine the decay rates proven in Section \ref{sect:semilinear} (which we should not expect to be better), we have
\[  |\phi| \lesssim (y^0)^{-1/2}, \quad |\partial_{\ub{u}} \phi| \lesssim (y^0)^{-3/2+\gamma}, \quad |\partial^2_{\ub{u}\ub{u}} \phi | \lesssim (y^0)^{-3/2+\gamma}, \]
making the decay for $\bgf''(u)\phi \partial^2_{\ub{u}\ub{u}}\phi$ \emph{slower} by a factor of $y^0$ compared to the semilinear term. 

This potential difficulty is significantly ameliorated in $d \geq 5$; doing a similar analysis using the proven decay rates in Section \ref{sect:semilinear} shows that the difference between the quasilinear $\bgf''(u)\phi \partial^2_{\ub{u}\ub{u}}\phi$ term and its semilinear counterpart, when $d = 5,6$ is merely a factor of $\ln(\tau)$ which does not impact the bootstrap argument; and when $d \geq 7$ no difference is present. 
Hence, both for brevity of presentation and clarity of argument, we shall concentrate the remainder of this paper on the most difficult case $d = 3$.
The higher dimensional cases can all be handled similarly; with the difference being mainly one of bookkeeping. 

The overcoming of this potential difficulty with the $\bgf'' \phi \partial^2_{\ub{u}\ub{u}}\phi$ terms in dimension $d = 3$ relies, unsurprisingly, on the ``null structure'' of the equation. 
In Section \ref{sect:semilinear} for brevity of argument the derivatives $L^1\phi$ and $L^i\phi$ for $i = 2,\ldots, d$ are estimated isotropically. 
However, the equations that they satisfy are not the same: recalling that the worst term of the inhomogeneity arises from when the weighted vector fields hit the background $\bgf''$, we expect
\[ \Box L^1 \phi \approx (L^1 \bgf'') (\phi_{\ub{u}})^2 \]
in the semilinear argument. However, a direct computation shows that 
\[ L^1 \bgf'' = u \bgf''' \]
is again a smooth function with compact support in $u$. In particular, while for $i = 2, \ldots, d$ we have the \emph{growing weights} as described in Lemma \ref{lem:fderest}, this loss \emph{is not seen by pure $L^1$ derivatives}. 
Therefore we expect $L^1\phi$ to actually enjoy better decay compared to $L^i\phi$ for $i \neq 1$. 
Finally, returning to the difficult term $\phi_{\ub{u}\ub{u}}$, we see that the $\partial_{\ub{u}}$ derivative lies in the span of $T$ and $L^1$ (see also \eqref{eq:hiddennull} and Remark \ref{rmk:ncvestige}); hence we will expect that $\partial^2_{\ub{u}\ub{u}}\phi$ to decay \emph{faster} than the generic tangential second derivative, allowing us to eventually close our estimates. 

\begin{rmk}\label{rmk:semi:two}
	In $d = 2$ this observation is in fact enough to allow us to close the energy estimate for the \emph{semilinear} model. However, additional difficulties come up in the analysis of the full \emph{quasilinear} problem that cannot be treated using only this method, hence we omit its discussion below. For the semilinear problem \eqref{eq:modelprob}, let us denote by $\mathfrak{E}_k$ the $k$-th order energies for $\phi$, and $\mathfrak{F}_k$ the $k$-th order energies for $L^1\phi$ (analogously to how we proceed in Section \ref{sect:energyBA} below for the quasilinear problem in $d = 3$). This way of treating the equations for $\phi$ and $L^1\phi$ separately allows us to close the global-existence bootstrap in a manner similar to that described in Section \ref{sect:semilinear} with the energy bounds
	\begin{align*}
		\mathfrak{E}_0, \mathfrak{F}_0 & \lesssim \delta, \\
		\mathfrak{E}_1, \mathfrak{F}_1 & \lesssim \delta \tau^{\gamma},\\
		\mathfrak{E}_2, \mathfrak{F}_2 & \lesssim \delta \tau^{1+\gamma},\\
		\mathfrak{E}_3 & \lesssim \delta \tau^{2+\gamma}.
	\end{align*}
	For the quasilinear problem, this scheme breaks down when dealing with the $TT\phi$ derivatives that crop up. 
\end{rmk}

In dimension $d \geq 3$, the cubic and higher nonlinearities are essentially harmless, even with the slightly reduced decay rates. (In the linear case the terms placed in $L^\infty$ combine to decay at least as fast $(y^0)^{-3/2}$; a loss of $\gamma < \frac13$ can be easily absorbed.)
This fact allows us to essentially ignore all ``null structure'' when handling the cubic and higher order terms, which allows us to significantly simplify the bookkeeping involved. 

\subsection{The perturbed system, restated}\label{sect:perturbedsystem}
Our goal this section is to derive the evolution equations for $L^i\phi$. 
Some of the computations are lengthy and not entirely transparent: they are recorded in Appendix \ref{sect:app:ps}.
We start with \eqref{eq:geomeqphi} which we re-write as
\[ \sqrt{|\gmetr|} \partial_{\mu}  \frac{\gmetrf^{\mu\nu}\partial_\nu \phi}{\sqrt{|\gmetr|}} = \bgf'' (\phi_{\ub{u}})^2.\]
We expand the left hand side as
\begin{multline*} \Box_\mink \phi + 2 \partial_{\ub{u}} \big( \phi \bgf'' \partial_{\ub{u}}\phi \big) + \sqrt{|\gmetr|} \gmetrf( \D\phi, \D |\gmetr|^{-1/2}) \\
	= \Box_{\mink} \phi + 2 \partial_{\ub{u}}  \big( \phi \bgf'' \partial_{\ub{u}}\phi \big) - \frac1{2|\gmetr|} \gmetrf\big( \D\phi, \D\big( \gmetrf(\D\phi, \D\phi) \big)\big).
\end{multline*}
Notice, on the other hand, that 
\begin{multline}\label{eq:boxgexpr} \Box_\gmetr \psi = \Box_{\mink}\psi + 2 \partial_{\ub{u}} \big( \phi \bgf'' \partial_{\ub{u}} \psi \big) - \frac{1}{|\gmetr|} \gmetrf\big(\D\phi, \D\big(\gmetrf(\D\phi, \D\psi)\big)\big)\\
	- \frac{1}{|\gmetr|} \bgf'' (\phi_{\ub{u}})^2\cdot \gmetrf(\D\phi,\D\psi) + \frac1{2|\gmetr|} \gmetrf(\D|\gmetr|, \D\psi).
\end{multline}
Together this implies that, if $X$ is a Killing vector field of the Minkowski metric $\mink$, that
\begin{multline}\label{eq:geomeqphiX}
	\Box_{\gmetr} X\phi = \boxed{X \big( \bgf'' (\phi_{\ub{u}})^2 \big)} - \frac{1}{|\gmetr|} \bgf'' (\phi_{\ub{u}})^2 \gmetrf(\D\phi, \D(X\phi)) + \frac{1}{|\gmetr|} \gmetrf(\D|\gmetr|, \D(X\phi)) \\
	- \boxed{2 [X, \partial_{\ub{u}}](\phi \bgf'' \phi_{\ub{u}})} - \boxed{2 \partial_{\ub{u}}( X(\phi \bgf'') \phi_{\ub{u}})} - \boxed{2 \partial_{\ub{u}}(\phi \bgf'' [X,\partial_{\ub{u}}] \phi)} \\
	- \frac{1}{2|\gmetr|^2} X(|\gmetr|) \gmetrf(\D\phi, \D|\gmetr|) + \frac{1}{2|\gmetr|}\lieD_X(\gmetrf^{-1})(\D\phi, \D|\gmetr|)\\
	+ \frac{1}{2|\gmetr|} \gmetrf\big(\D\phi, \D\big(\lieD_X(\gmetrf^{-1})(\D\phi, \D\phi)\big)\big).
\end{multline}
Here, $\lieD_X\gmetrf^{-1}$ is the Lie derivative of the inverse metric $\gmetrf^{-1}$ by the vector field $X$. It can be given as
\begin{equation}
	\lieD_X \gmetrf^{-1} = 2 X(\phi\bgf'') \partial_{\ub{u}} \otimes \partial_{\ub{u}} + 
		 2 \phi\bgf'' [X,\partial_{\ub{u}}] \otimes \partial_{\ub{u}} +
		  2\phi\bgf''\partial_{\ub{u}} \otimes [X,\partial_{\ub{u}}].
\end{equation}
The boxed terms in \eqref{eq:geomeqphiX} are those with \emph{quadratic nonlinearity} and are the ones for which the null structure play an important role. The remaining terms on the right hand side all have cubic or higher nonlinearities, and will be treated more roughly in the estimates. 

Later on we will take $X$ to be one of $L^i$; we can compute the commutators (see \eqref{eq:def:L1} and \eqref{eq:def:Li} for definitions)
\begin{align}
	[L^1, \partial_{\ub{u}}] &= - \partial_{\ub{u}};\\
	[L^i, \partial_{\ub{u}}] & = - \frac{1}{\sqrt{2}} \frac{1}{y^0}(L^i - y^i T), && i\in \{2, \ldots, d\}.
\end{align}

For convenience, we will introduce the following \emph{schematic notations}. 
\begin{notn}\label{notn:PfWf}
First, in view of Lemma \ref{lem:fderest}, we will denote by $\pwWf_m$ any finite sum of terms of the form
\begin{equation}
	\Big(\text{Polynomial in }\{\ub{u}, \hat{x}\}\Big) \cdot \Big(\text{Compactly supported smooth function of $u$}\Big)
\end{equation}
such that on $\mathcal{I}^+$ it is bounded by $(1 + \ub{u})^{m/2}$. 
Our assumptions imply $\bgf'' = \pwWf_0$. 
The computations surrounding the proof of Lemma \ref{lem:fderest} imply that 
\begin{equation}
	T \pwWf_m = \pwWf_m\;, \qquad L^1 \pwWf_m = \pwWf_m\;, \qquad L^i \pwWf_m = \pwWf_{m+1}\; \text{ for }i\in \{2, \ldots, d\}.
\end{equation}
We will denote by $\swtf_m$ any element of $\sweight_m$. 
\end{notn}
With these notations, we can rewrite schematically 
\begin{equation}\label{eq:hiddennull}
	\pwWf_m \partial_{\ub{u}} = \swtf_1  \pwWf_m \big( L^1 +  T\big).
\end{equation}
\begin{rmk}[Vestige of null condition] \label{rmk:ncvestige}
	As discussed in Remark \ref{rmk:resonances}, the presence of the $\bgf''$ factor in $\bgf'' (\phi_{\ub{u}})^2$ helps to ameliorate the resonant interaction. This improvement is a vestige of the null condition of the original membrane equation. 
	In our reformulation here, this improvement is captured in \eqref{eq:hiddennull} above. 
	Observe that a generic coordinate derivative $\partial_{u}$, $\partial_{\ub{u}}$, or $\partial_{\hat{x}}$ can be written only as an element of the commutator algebra $\algcomm_1$, which means that the \emph{transversal} factor $T$ is not accompanied by a decaying weight. 
	From this one can see that quadratic terms of the form $(T\phi)^2$ will serve as a severe obstacle to global existence. 
	In our setting, however, the $\pwWf_0$ weight $\bgf''$ provides a spatial localization and gives an \emph{anomalous weighting}: the term $\swtf_1 T \in \algcomm_2$ and has \emph{improved decay} and this improvement is, fundamentally, what allows our argument to close in this paper. 
\end{rmk}
\begin{notn}\label{notn:algdo}
We will frequently denote by $\algdo^{k,s}_{w}$ an element of $\algdiff^{k,s}_{w'}$ with $w' \geq w$. 
When $|\gmetr|$ appears in a higher order term, if is often sufficient to control it as
\begin{equation}
	|\gmetr| = 1 + (\algdo^{1,1}_1\phi)^2(1 + \pwWf_0 \phi),
\end{equation}
and similarly we can write
\begin{equation}
	\gmetrf(\D\varphi, \D\psi) =  (\algdo^{1,1}_1\varphi)(\algdo^{1,1}_1\psi)(1 + \pwWf_0 \phi).
\end{equation}
\end{notn}

\begin{rmk}\label{rmk:TTphi}
	Observe that in \eqref{eq:geomeqphiX}, the inhomogeneity depends on \emph{up to second order derivatives of $\phi$}. If we decompose nonlinearities, the second order derivatives that appear are \emph{generic}, in the sense that derivatives with principal parts $TT\phi$, $TL^i\phi$, and $L^iL^j\phi$ all appear. (Note that $\{T, L^i\}$ span the tangent space $\Real^{1,d}$.) To control $TL^i\phi$ and $L^iL^j\phi$ in $L^\infty$, by Proposition \ref{prop:peeling} it suffices to control the energies of $L^\alpha \phi$. 
	The term $TT\phi$, however, is \emph{not} controlled by these energies.  
	There are two approaches to address this. 
	First is to enlarge the set of commutators required; instead of only commuting with the boosts $L^\alpha$, one can commute with also the $T$ vector field. 
	Checking the commutator relations, to close this argument one would have to commute with all differential operators of the form $L^\alpha T^k$ where $|\alpha| + k$ is bounded by some $k_0$. 
	For our problem, it appears slightly simpler computationally to take the second (essentially equivalent) alternative. 
	By decomposing $\Box_g$ we can solve \eqref{eq:geomeqphi} for $TT\phi$ in terms of $TL^i\phi$ and $L^iL^j\phi$ and lower order derivatives. 
	This implies $TTL^\beta \phi$ can be estimated in terms of $T L^\gamma \phi$ and $L^\alpha \phi$ where $|\gamma| \leq |\beta| + 1$ and $|\alpha | \leq |\beta| + 2$. 
	See Appendix \ref{sect:app:ps} for the details of this computation.
\end{rmk}

\begin{notn}\label{notn:smtf}
	We will denote by $\smtf = \smtf(\phi\pwWf_0, \algdo^{1,0}_0 \phi, \algdo^{1,1}_1\phi)$ an arbitrary smooth function of its arguments. In particular, $|\gmetr| = \smtf$ in this notation, as well as $|\gmetr|^{-1} = \smtf$ when the $\phi, \algdo^{1,0}_0 \phi$, and $\algdo^{1,1}_1\phi$ are all sufficiently small. 
\end{notn}
It is convenient to simplify \eqref{eq:geomeqphiX} a bit more. 

With the aid of these schematic notations, we find that $L^1\phi$ satisfies
\begin{multline}\label{eq:geomeqL1phi}
\Box_\gmetr L^1\phi = \pwWf_0 \swtf_2 \cdot \Bigl[ (L^1\phi + T\phi)^2 + T\phi(L^1L^1\phi + TL^1\phi) \\
	+ \phi (L^1\phi + T\phi) + (\phi + L^1\phi) (L^1 L^1 \phi + T L^1\phi + T T\phi) \Bigr] \\
+ \smtf (\algdo_1^{1,1}\phi)(\algdo_1^{1,1}L^1\phi) (\algdo_2^{2,2}\phi) + \smtf \pwWf_0 (\algdo_0^{1,0}\phi)(\algdo_1^{1,1}\phi)^2 (\algdo_2^{2,2}\phi)\\
	+ \smtf \pwWf_0 (\algdo_1^{1,1}\phi)^3(\algdo_1^{2,1}\phi) + \smtf\pwWf_0 (\algdo_1^{1,1}\phi)^5(\algdo_2^{2,2}\phi) + \smtf \pwWf_0 (\algdo_0^{1,0}\phi)(\algdo_1^{1,1}\phi)^4\\
	+ \smtf \phi \swtf_1\pwWf_1 (\algdo_1^{1,1}\phi)^2(\algdo_1^{2,1}\phi) +   \smtf \phi^2 \swtf_2\pwWf_2 (\algdo_1^{1,1}\phi)^4(\algdo_1^{2,1}\phi) \\
	+ \smtf \phi \swtf_1\pwWf_1 (\algdo_1^{1,1}\phi)^5(\algdo_2^{2,2}\phi) + \smtf \swtf_1 \pwWf_1 (\algdo_0^{1,0}\phi)(\algdo_1^{1,1}\phi)^3
\end{multline}
The first brackets capturing all the quadratic nonlinearities and the cubic and higher nonlinearities are described schematically after. 
For $i \neq 1$, the term $L^i\phi$ satisfies the equation
\begin{multline}\label{eq:geomeqLiphi}
\Box_\gmetr L^i \phi= \pwWf_0 \swtf_1(\phi + L^1\phi + T \phi)(\algdo_1^{1,1} L^1\phi + \algdo_2^{2,2}\phi + \algdo_1^{1,1}\phi)\\
	+ \swtf_2 (\pwWf_0 L^i\phi + \pwWf_1 \phi)(L^1 L^1\phi + TL^1\phi + TT\phi + L^1\phi + T\phi) \\
	+ \pwWf_1 \swtf_2 (\phi + L^1\phi + T\phi)(L^1\phi + T\phi) \\
+  \smtf (\algdo_1^{1,1}\phi)(\algdo_1^{2,1}\phi) (\algdo_2^{2,2}\phi) + \smtf \pwWf_0 (\algdo_0^{1,0}\phi)(\algdo_1^{1,1}\phi)^2 (\algdo_2^{2,2}\phi)\\
	+ \smtf \pwWf_0 (\algdo_1^{1,1}\phi)^3(\algdo_1^{2,1}\phi) + \smtf\pwWf_0 (\algdo_1^{1,1}\phi)^5(\algdo_2^{2,2}\phi) + \smtf \pwWf_0 (\algdo_0^{1,0}\phi)(\algdo_1^{1,1}\phi)^4\\
	+ \smtf \phi \swtf_1\pwWf_1 (\algdo_1^{1,1}\phi)^2(\algdo_1^{2,1}\phi) +   \smtf \phi^2 \swtf_2\pwWf_2 (\algdo_1^{1,1}\phi)^4(\algdo_1^{2,1}\phi) \\
	+ \smtf \phi \swtf_1\pwWf_1 (\algdo_1^{1,1}\phi)^5(\algdo_2^{2,2}\phi) + \smtf \swtf_1 \pwWf_1 (\algdo_0^{1,0}\phi)(\algdo_1^{1,1}\phi)^3 \\
+ \smtf \phi \pwWf_1 (\algdo_1^{1,1}\phi)^2 (\algdo_2^{2,2}\phi) 
		+ \smtf \phi \swtf_1 \pwWf_2 (\algdo_1^{1,1}\phi)^3 + \smtf \pwWf_1 (\algdo_1^{1,1}\phi)^4.
\end{multline}
Note that the cubic and higher-order terms are schematically represented largely in the same way, with the main differences coming in the quadratic terms. 
The key observation, as already mentioned in the introduction to this section, is that the quadratic terms in the equation for $L^1\phi$ do not see the growing weight term, and therefore behaves like $\phi$ instead of a generic $L\phi$ term. 
This improvement then also propagates into the analysis of the quadratic terms of equation \eqref{eq:geomeqLiphi} of the general $L$ derivatives.

For convenience, we record \eqref{eq:geomeqphi} here in the schematic notation. 
\begin{equation}\label{eq:geomeqphiS}
	\Box_\gmetr \phi = \smtf \pwWf_0\swtf_2 (L^1 \phi + T\phi)^2.
\end{equation}

\subsection{Commutator relations}\label{sect:commutatorrels}

To use the vector field method, we will be commuting our equations with the $L^i$ derivatives. More precisely, we study the wave equations satisfied by $\algdo_0^{k,0}(L^1\phi, L^i\phi)$ by writing
\[ \Box_\gmetr (\algdo_0^{k,0} L\phi) = \algdo_0^{k,0} (\Box_\gmetr L\phi) + [\algdo_0^{k,0},\Box_g] (L\phi).\]
Note that after applying \eqref{eq:geomeqL1phi} and \eqref{eq:geomeqLiphi} the right-side does not contain principal terms. 
Differentiation of the schematic relations in \eqref{eq:geomeqL1phi}, \eqref{eq:geomeqLiphi}, and \eqref{eq:geomeqphiS} are straightforward. 
To implement our strategy, we need to compute the commutators $[X, \Box_g]$ acting on a smooth scalar $\psi$, where $X = L^1$ or $L^i$. 
We merely record the results here, and defer the actual computation to \ref{sect:app:cr}.
\begin{multline}\label{eq:commtermlist}
	[X,\Box_g]\psi = \pwWf_0 \swtf_1 (\algdo_1^{1,1}\phi)(L^1\psi + T\psi) + \pwWf_0 \swtf_1 (\phi + L^1\phi + T\phi) (\algdo_1^{1,1}\psi)\\
	+ \pwWf_0 \swtf_1 (\algdo_0^{1,0}\phi) (\algdo_1^{1,1} L^1\psi + \algdo_1^{1,1} T\psi) + \pwWf_0 \swtf_2 (\algdo_0^{1,0} L^1 \phi + \algdo_0^{1,0} T\phi)(L^1\psi + T\psi) \\
	+\pwWf_1 \swtf_2 (\phi + L^1\phi + T\phi) (L^1\psi + T\psi) + \pwWf_1 \swtf_2 \phi (L^1 L^1\psi + T L^1 \psi + T T\psi) \\
	+(X\smtf)\cdot \bigl[ (\algdo_1^{1,1}\phi)(\algdo_2^{2,2}\phi)(\algdo_1^{1,1}\psi) + (\algdo_1^{1,1}\phi)^2 (\algdo_2^{2,2}\psi) \\
	+ \pwWf_0 (\algdo_1^{1,1}\phi)^3 (\algdo_1^{1,1}\psi) + \pwWf_1 \swtf_1\phi (\algdo_1^{1,1}\phi)^2 (\algdo_1^{1,1}\psi)\bigr]\\
	+ \smtf \cdot \bigl[ (\algdo_1^{2,1}\phi)(\algdo_2^{2,2}\phi)(\algdo_1^{1,1}\psi) + (\algdo_1^{1,1}\phi)(\algdo_2^{3,2}\phi)(\algdo_1^{1,1}\psi) \\
		+ (\algdo_1^{1,1}\phi)(\algdo_1^{2,1}\phi)(\algdo_2^{2,2}\psi) + \pwWf_0 (\algdo_1^{1,1}\phi)^2(\algdo_1^{2,1}\phi) (\algdo_1^{1,1}\psi) \\
		+ \pwWf_1 (\algdo_1^{1,1}\phi)^3 (\algdo_1^{1,1}\psi) + \pwWf_1 \swtf_1\phi (\algdo_1^{1,1}\phi_)(\algdo_1^{2,1}\phi)(\algdo_1^{1,1}\psi) \\
	+ \pwWf_1 \swtf_1 (\algdo_0^{1,0}\phi)(\algdo_1^{1,1}\phi)^2 (\algdo_1^{1,1}\psi) + \pwWf_2 \swtf_1 \phi (\algdo_1^{1,1}\phi)^2 (\algdo_1^{1,1}\phi)\bigr].
\end{multline}
Notice that the quadratic terms (linear in both $\phi$ and $\psi$) are listed explicitly, as we expect to need to use the null structure to extract sufficient decay. The cubic and higher terms (which are at least quadratic in the background $\phi$), are listed purely schematically. 

\begin{rmk}\label{rmk:commterm:simpl}
	Now and in the sequel, $\mathrm{HO}_1$ constitutes the cubic and higher order terms that arise in the right hand side of \eqref{eq:geomeqL1phi}, see also Appendix \ref{appen:HO1}. Similarly, $\mathrm{HO}_i$ for equation \eqref{eq:geomeqLiphi}, see also Appendix \ref{appen:HOi}. A key thing to note about the commutator relation \eqref{eq:commtermlist} is that, with $\psi = L^\alpha L^1\phi$ for some multi-index $\alpha$, every \emph{cubic and higher} term that appears in the schematic decomposition above can be obtained, schematically, as a term that appears in an $L^{\leq |\alpha|+1}$ derivative of $\mathrm{HO}_1$. And similarly with $\psi = L^\alpha \phi$ every cubic and higher term in the schematic decomposition is a term that appears in an $L^{\leq|\alpha|}$ derivative of $\mathrm{HO}_i$. (The only difference being our schematic treatment of the purely cubic term; see Remark \ref{app:rmk:hoDN}.). Thus we will not separately treat the cubic and higher terms that arise from the commutator in our analyses later, and absorb it as part of the general discussion of higher order terms. 

	Similarly, with $\psi = L^\alpha\phi$ all the quadratic terms that appear in \eqref{eq:commtermlist} can be obtained from $L^{\leq |\alpha|}$ derivatives hitting on $\mathrm{QN}_i$, which are defined as the quadratic inhomogeneity of \eqref{eq:geomeqLiphi}. 
	However as we can see in the case $\psi = L^\alpha L^1$, the final quadratic commutator term of the form $\pwWf_1 \swtf_2 \phi (L^1 L^1 \psi+ TL^1\psi + TT\psi)$ cannot be obtained as an $L^{\leq |\alpha| + 1}$ derivative of $\mathrm{QN}_1$, which are defined as the quadratic homogeneity of \eqref{eq:geomeqL1phi} (notice the differing weights $\pwWf_0$ and $\pwWf_1$). These turn out to be the most delicate terms in the analysis, and in Section \ref{ssect:qno} will be the main terms to saturate the polynomial growth in the energy estimates.   
\end{rmk}

\subsection{Statement of the main theorem}

Our main theorem asserts that when the initial plane-wave $\bgf$ has bounded width, then this travelling wave solution is stable under small compactly supported perturbations. 
By rescaling and translating we can assume the perturbation is supported in the unit ball $B(0,1)\subset \Real^3$ on the spatial slice $\{y^0 = 2\}$. 

\begin{thm}\label{thm:mainthmQ}
	Let $d = 3$ and assume $\bgf(u)$ is such that $\bgf''$ has compact support in $u$. 
	Consider the initial value problem for \eqref{eq:geomeqphi}, where the dynamical metric is given by \eqref{eq:dynmet}. 
	We assume the initial data is prescribed on the spatial slice $\{y^0 = 2\}$ by 
	\[ \phi |_{y^0 = 2} = \phi_0, \quad \partial_{y^0} \phi |_{y^0 = 2} = \phi_1, \]
	where $\phi_0, \phi_1 \in C^\infty_0(B(0,1))$. 
	Then for any $\gamma > 0$ there exists some $\epsilon_0 > 0$ (which we allow to depend on $\bgf$ and on $\gamma$) such that whenever
	\[ \|\phi_0\|_{H^{5}} + \|\phi_1\|_{H^4} \leq \epsilon_0 \]
	the solution $\phi$ exists for all time $y^0 \geq 2$. Furthermore, we have the following uniform bounds on the solution and its derivatives:
	\begin{gather*}
		|\phi| + |L^1\phi| \lesssim (y^0)^{-1/2}, \\
		|T\phi| + |T L^1\phi| \lesssim \tau^{\gamma-1}(y^0)^{-1/2} , \\
		|\algdo_0^{1,0} \phi| + |\algdo_0^{1,0} L^1\phi| + |T\algdo_0^{1,0}\phi| + |TT\phi| \lesssim \tau^\gamma (y^0)^{-1/2}, \\
		|\algdo_0^{2,0} \phi| + |\algdo_1^{3,1}\phi| \lesssim  \tau^{1+\gamma}(y^0)^{-1/2}. \\
	\end{gather*}
\end{thm}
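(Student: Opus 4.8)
The plan is to run a bootstrap argument on a polynomially growing energy hierarchy, adapting the scheme of Section \ref{sect:semilinear} to the quasilinear setting. First I would invoke standard local well-posedness and finite speed of propagation: the solution exists past $\Sigma_2$, and since the data are supported in $B(0,1)$ the solution is confined to the region where $\sqrt{2}(u+\ub{u}) \leq \tau^2 + 1$, exactly as in \eqref{eq:suppcond}. By the continuation criterion for quasilinear wave equations it then suffices to propagate an \emph{a priori} bound on $\|\phi\|_{W^{1,\infty}(\Sigma_\tau)}$; smallness of this quantity also keeps $|\gmetr|$ bounded away from zero and $\siggmetr_\tau$ Riemannian, so that \eqref{eq:geomeqphi} stays a hyperbolic equation with $\Sigma_\tau$ uniformly spacelike.

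Following the observation in Remark \ref{rmk:semi:two}, I would carry \emph{two} energy hierarchies: $\mathfrak{E}_k(\tau) = \sup_{\sigma\in[2,\tau]}\sum_{|\alpha|\leq k}\Energy_\sigma[L^\alpha\phi;\gmetr]$ for $k \leq 4$, and the analogous $\mathfrak{F}_k(\tau)$ built from the $L^\alpha L^1\phi$ for $k \leq 3$ — the regularity $H^5\times H^4$ is exactly enough for these orders, and, crucially, $L^1\phi$ obeys the better equation \eqref{eq:geomeqL1phi} whose quadratic inhomogeneity does not see the growing weight of Lemma \ref{lem:fderest}. The bootstrap assumptions, extending the $d=3$ semilinear rates, are $\mathfrak{E}_k,\mathfrak{F}_k \leq \delta$ for $k\leq 1$ and $\mathfrak{E}_k,\mathfrak{F}_k \leq \delta\,\tau^{k-2+\gamma}$ for $k\geq 2$, on a maximal subinterval $[2,\tilde\tau)$ for some $\delta > \epsilon_0$. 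Proposition \ref{prop:peeling} converts these into the pointwise rates claimed in the theorem (with $|\gmetr|$, $\gmetrf(\D\phi,\D\phi)$, etc.\ comparable to their Minkowskian counterparts, so that $\Energy_\tau[\varphi;\gmetr]\approx\Energy_\tau[\varphi;\mink]$). Feeding these into the energy inequality \eqref{eq:engineq} with $g = \gmetr$, and using $\lieD_T\mink = 0$ so that $\lieD_T\gmetr = \lieD_T(\D\phi\otimes\D\phi - 2\phi\bgf''\,\D u\otimes\D u)$ is a small, compactly-in-$u$-supported perturbation, reduces matters to estimating the spacetime integral of $\bigl|\Box_\gmetr\psi\bigr|\,\bigl|T\psi\bigr|$ over each slab $\{\tau\in[\tau_0,\tau_1]\}$, where $\psi$ ranges over the $L^\alpha\phi$ and $L^\alpha L^1\phi$.

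The analytic heart is the estimation of these inhomogeneities. Commuting \eqref{eq:geomeqL1phi}, \eqref{eq:geomeqLiphi} and \eqref{eq:geomeqphiS} with $L^\alpha$, adding the commutator terms of \eqref{eq:commtermlist}, and invoking Remark \ref{rmk:commterm:simpl} to recycle the cubic-and-higher pieces, I would split each right-hand side into quadratic and higher-order parts. For the quadratic terms I would apply the vestigial null structure \eqref{eq:hiddennull}, $\pwWf_m\partial_{\ub{u}} = \swtf_1\pwWf_m(L^1+T)$: on the support of $\bgf''$ the resonant derivative $\partial_{\ub{u}}$ is a good (tangential) derivative, which via the anisotropy of the hyperboloidal energy (Remark \ref{rmk:peeling}) effectively gains an extra $\tau^{-1}$ when placed in $L^2$. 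The generic second derivatives $TT\phi$ that appear — from $\bgf''\phi\,\partial^2_{\ub{u}\ub{u}}\phi$ in \eqref{eq:lagrange3} as well as from the last quadratic commutator term $\pwWf_1\swtf_2\,\phi\,(L^1L^1\psi + TL^1\psi + TT\psi)$ — are \emph{not} controlled by the boost energies; following Remark \ref{rmk:TTphi} I would solve \eqref{eq:geomeqphi} algebraically for $TT\phi$ in terms of $TL^i\phi$, $L^iL^j\phi$ and lower-order $\smtf$-terms, so that $TTL^\beta\phi$ is controlled through $TL^{\leq|\beta|+1}\phi$ and $L^{\leq|\beta|+2}\phi$. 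The higher-order terms can be treated crudely: in $d=3$ each pointwise factor loses at most $\tau^\gamma$ relative to the linear rate, so even a product of three decays integrably once the $\bgf''$ localization is accounted for. The outcome is a system of inequalities of the shape of \eqref{eq:apestsemi}, namely $\mathfrak{E}_k(\tau)^2 - \mathfrak{E}_k(2)^2 \lesssim \sum\int_2^\tau w(s)\,\mathfrak{E}\,\mathfrak{E}\,\mathfrak{E}\;\D s$ and similarly with $\mathfrak{F}$, with weights $w$ tabulated exactly as in Table \ref{tbl:Semi3weights}; the non-integrable rows are precisely those in which every commutator lands on $\bgf''$, and they produce the $\tau^\gamma$, $\tau^{1+\gamma}$, $\tau^{2+\gamma}$ growth. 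Integrating, then choosing $\delta$ small and $\epsilon_0\ll\delta$, improves all the bootstrap constants; the method of continuity gives $\tilde\tau = \infty$, and Proposition \ref{prop:peeling} applied to the final bounds yields both the claimed uniform estimates and the $W^{1,\infty}$ control needed for global existence.

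The hard part will be the top-order estimate — the one controlling $\mathfrak{E}_4$, and hence $\algdo_0^{2,0}\phi$ and $\algdo_1^{3,1}\phi$. There the quadratic commutator term $\pwWf_1\swtf_2\,\phi\,(TT\psi+\dots)$, which as noted in Remark \ref{rmk:commterm:simpl} is \emph{not} obtained by differentiating any lower-order equation, must be handled by the algebraic substitution for $TT$, after which the vestigial null gain has to be extracted with just enough care to land exactly on the borderline growth rate $\tau^{2+\gamma}$ rather than something worse; keeping the bookkeeping of the $\smtf,\pwWf_m,\swtf_m$ weights consistent through that substitution, and verifying that it does not reintroduce a worse coefficient, is the delicate point. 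Everything else should be a lengthy but essentially routine adaptation of the $d=3$ argument of Section \ref{sect:semilinear}.
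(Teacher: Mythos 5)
Your proposal matches the paper's proof in both structure and every key mechanism: the two energy hierarchies $\mathfrak{E}_k$ (to order $4$) and $\mathfrak{F}_k$ (to order $3$), the bootstrap rates $\delta\tau^{k-2+\gamma}$, the geometric comparison making $\Energy_\tau[\cdot;\gmetr]\approx\Energy_\tau[\cdot;\mink]$, the vestigial null structure \eqref{eq:hiddennull}, the algebraic elimination of $TT\phi$ via \eqref{eq:TTcomp}, and the identification of the commutator term $\pwWf_1\swtf_2\,\phi\,(TT\psi+\dots)$ as the delicate top-order contribution. The only cosmetic discrepancy is that the paper does not literally reuse Table \ref{tbl:Semi3weights} for the quasilinear case but rather carries out the weight bookkeeping through the schematic principle \eqref{eq:schemeprincip} in Sections \ref{sect:inhom}--\ref{sect:closing}; this is a presentational difference, not a mathematical one.
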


\begin{rmk}\label{rmk:lostofpeeling}
Observe that in particular, the coordinate derivatives (with respect to $y$) up to second order all decay uniformly as $y^0 \nearrow \infty$.
As will be clear from the proof, if the initial data has higher regularity the regularity persists for the solution. 
This can be extended to show that (the details of the proof we omit here) that arbitrary order coordinate derivatives of the solution decay uniformly like $(y^0)^{-1/2 + \gamma}$. 
Peeling, however, doesn't hold to arbitrary orders, unlike the case of the linear wave. If we denote by $\bar{\partial}$ a derivative that is tangential to out-going Minkowski light-cones, our results are only compatible with these outgoing tangential derivatives  $\bar{\partial}^\beta \phi$ being uniformly bounded by $(y^0)^{-3/2 + \gamma}$ \emph{for all orders} $|\beta| \geq 2$. 
\end{rmk}

\section{Energy quantities and bootstrap assumptions}
\label{sect:energyBA}

The remainder of this paper is devoted to proving Theorem \ref{thm:mainthmQ}. 
In view of the robust local existence theory for nonlinear wave equations, the strategy we will take is that of a standard bootstrap argument. 
In this section we will set the notations for the basic energy quantities and perform some preliminary analyses on them, having also introduced the main bootstrap assumptions.

\subsection{The energy quantities defined; bootstrap assumptions}
Recall from \eqref{eq:def:eng2} the energy quantity
\[ \Energy_\tau[\psi;\gmetr]^2 = 2\int_{\Sigma_\tau} \frac{1}{\sqrt{|g(\D\tau,\D\tau)|}} \seten[\psi;\gmetr](T, (-\D\tau)^{\gmetr\sharp}) ~\dvol_{\siggmetr_\tau}\]
which satisfies the basic energy inequality \eqref{eq:engineq} for $\tau_0 < \tau_1$
\begin{equation}\label{eq:engineq:restate}
	\Energy_{\tau_1}[\psi;g]^2 \leq \Energy_{\tau_0}[\psi;g]^2 + \iint_{\tau \in [\tau_0,\tau_1]} \big|\seten[\psi;g]:_g \lieD_Tg\big| + 2\big|\Box_g \psi \cdot T(\psi)\big| ~\dvol_{g}.
\end{equation}
Here $\psi$ will stand for some higher $L$ derivative of the solution $\phi$. 
One difference between our quasilinear setting and the semilinear model treated in Section \ref{sect:semilinear} is the presence of the first integrand in the energy inequality. In the semilinear case $\lieD_T\mink = 0$.
The analysis of the second integrand will occupy Section \ref{sect:inhom}, using the equations \eqref{eq:geomeqphiS}, \eqref{eq:geomeqL1phi}, \eqref{eq:geomeqLiphi}; we treat the first integrand here. 

The integrand can be expanded as
\[ \seten[\psi;\gmetr] :_\gmetr \lieD_T \gmetr = (\lieD_T\gmetr^{-1})(\D\psi,\D\psi) - \frac12 \gmetr^{-1}(\D\psi,\D\psi)\cdot  \gmetr :_\gmetr \lieD_T \gmetr.\]
We primarily care about terms that are linear in $\phi$: the terms with higher order dependence on $\phi$ we expect to behave better and will estimate very roughly. 
With that, and \eqref{eq:InvMet} in mind, schematically 
\begin{multline*}
	(\lieD_T\gmetr^{-1})(\D\psi,\D\psi) = (\phi + T\phi) \pwWf_0 \swtf_2 (L^1\psi + T\psi)^2 \\
	+ \smtf \left[ (\algdo_1^{1,1}\phi)(\algdo_2^{2,2}\phi) + (\algdo_1^{1,1}\phi)^2(1 + T\phi \pwWf_0) \right] (\algdo_1^{1,1}\psi)^2.
\end{multline*}
And we also have schematically, by \eqref{eq:dynmet}, that
\begin{align*}
	\gmetr :_\gmetr \lieD_T \gmetr &= \gmetr^{-1}(\D\phi, \D T\phi) + \gmetr^{-1}(\D u, \D u) (\phi + T\phi)\pwWf_0\\
				       &= \smtf (\algdo_1^{1,1}\phi)(\algdo^{2,2}_2\phi) + \smtf \pwWf_0 (\algdo_1^{1,1}\phi)^2(\phi + T\phi).
\end{align*}
Therefore we can conclude that schematically
\begin{multline}\label{eq:schema:deftenterm}
	\seten[\psi;\gmetr] :_\gmetr \lieD_T \gmetr = (\phi + T\phi) \pwWf_0 \swtf_2 (L^1\psi + T\psi)^2 \\
	+ \smtf \left[ (\algdo_1^{1,1}\phi)(\algdo_2^{2,2}\phi) + (\algdo_1^{1,1}\phi)^2(1 + T\phi \pwWf_0) \right] (\algdo_1^{1,1}\psi)^2.
\end{multline}
We will return to estimating this term in Section \ref{sect:deftenTerm} 

For convenience, for $\tau \geq 2$ and $k$ a non-negative integer, we will denote by 
\begin{align}
	\mathfrak{E}_k(\tau) &\eqdef \sup_{\sigma\in [2,\tau]} \Energy_\sigma[L^{\leq k}\phi; \gmetr], \\
	\mathfrak{F}_k(\tau) &\eqdef \sup_{\sigma\in [2,\tau]} \Energy_\sigma[L^{\leq k} L^1\phi; \gmetr].
\end{align}
We will make the following \emph{initial data assumption}:
\begin{equation}\label{eq:ass:id}
	\mathfrak{E}_4(2) + \mathfrak{F}_3(2) \leq \epsilon \tag{A\textsubscript{ID}}
\end{equation}
for some $\epsilon \geq \epsilon_0$.
We can make this assumption as by the standard local-existence argument for nonlinear wave equations, with the assumptions in Theorem \ref{thm:mainthmQ}, for sufficiently small $\epsilon_0$ the solution necessarily exists up to $\Sigma_2$. 
The continuity of the energy norms on initial data implies that as $\epsilon_0 \to 0$ the quantity $\mathfrak{E}_4(2) + \mathfrak{F}_3(2)\to 0$ also. 

As is typical of bootstrap arguments, we will assume there is some $T > 2$ such that for every $\tau \in [2,T]$ the following \emph{bootstrap assumptions} hold. 
We need three parameters: $\delta_0 > 0$ whose size will be fixed in Section \ref{sect:geoconBA} and considered constant afterwards; $\delta \in (0, \delta_0)$ which is a smallness parameter we will adjust to close the bootstrap. 
Without loss of generality we will assume $\gamma \in (0,1/4)$ is fixed throughout the argument. 
Our goal, as usual, is to demonstrate that the bootstrap assumptions below leads to improved versions of themselves, when $\delta$ and $\epsilon$ are taken to be sufficiently small. 
This then implies by standard continuity argument that the assumptions in fact hold for all times $\tau > 2$ and we obtain global existence. 

Our bootstrap assumptions are:
First, along $\Sigma_\tau$ we have the uniform bounds  
\begin{gather}\label{eq:ass:bainf}
	\left\{ \quad
\begin{aligned}
	|\phi| &\leq \delta_0 (y^0)^{-1/2}; \\
	|L^1\phi| & \leq \delta_0 (y^0)^{-1/2};\\
	|L^i\phi| & \leq \delta_0 (y^0)^{-1/2} \tau^\gamma;\\
	|T\phi| & \leq \delta_0 (y^0)^{-1/2} \tau^{\gamma - 1}.
\end{aligned}\right. \tag{BA\textsubscript{$\infty$}}
\end{gather}
Second, we assume that  
\begin{gather}\label{eq:ass:ba2}
	\left\{\quad
\begin{aligned}
	\mathfrak{E}_1(\tau) + \mathfrak{F}_1(\tau) & \leq \delta;\\
	\mathfrak{E}_2(\tau) + \mathfrak{F}_2(\tau) & \leq \delta \tau^\gamma;\\
	\mathfrak{E}_3(\tau) + \mathfrak{F}_3(\tau) & \leq \delta \tau^{1+\gamma};\\
	\mathfrak{E}_4(\tau) & \leq \delta \tau^{2 + \gamma}.\\
\end{aligned} \right. \tag{BA\textsubscript{2}}
\end{gather}

\subsection{Inequalities on that we use frequently} In the subsequent analysis, we will freely use the control of $y^0,\ \cosh(\rho),$ and $\ub{u}$ afforded by Lemma \ref{lem:decayonsupport}. As we will see, these estimates will be an important tool to obtain coercive control (with respect to $\mathfrak{E}_k, \ \mathfrak{F}_k$) of terms that arise in the energy estimates. They also have important consequences when used concurrently with the bootstrap assumptions, see, for instance, Proposition \ref{prop:pointwiseconBAinf}.

\begin{lem}\label{lem:decayonsupport}
The following estimates hold on $\mathcal{I}^+\cap \{\supp \phi \}\cap\{ \supp \pwWf_0\}$
\begin{align}
\ub u & \approx y^0 \label{eq:u=y0} \\ 
y^0 & \approx \tau^2  \label{eq:y0=tau^2} \\
\cosh(\rho) & \approx \tau. \label{eq:cosh=tau}
\end{align}
\end{lem}
\begin{proof}
Using $y^0 = (u + \ub u)/\sqrt{2}$, \eqref{eq:u=y0} follows because $\pwWf_0$ has compact support in $u$. Under the assumptions of the initial data in Theorem \ref{thm:mainthmQ}, finite speed of propagation implies that 
\[ \sqrt{|y^1|^2 + |y^2|^2 + |y^3|^2 } \le |y^0 - 2| + 1= y^0 - 1\] 
on the support of $\phi$. Since $\tau^2 = 2 u \ub u - |\hat x|^2 = (y^0)^2 - (y^1)^2 - (y^2)^2 - (y^3)^2$,  the previous inequality reads $2y_0 \le \tau^2 + 1$ and hence $y^0 \lesssim \tau^2$ because $\tau \ge 2$. Since $2u\ub u \ge \tau^2$ on $\mathcal{I}^+$ (see Section \ref{sect:globsob}),
\[ \tau^2 \le 2 u\ub u \lesssim \ub u\approx y^0\]
by appealing to the support of $\pwWf_0$. We have then proved \eqref{eq:y0=tau^2}. Finally, \eqref{eq:cosh=tau} follows from the identity $\tau \cosh(\rho) = y^0$ and \eqref{eq:y0=tau^2}.
\end{proof}

\subsection{Some first consequences of \texorpdfstring{(\ref*{eq:ass:bainf})}{(BA-infty)}}\label{sect:geoconBA}
The assumptions \eqref{eq:ass:bainf} are not strictly speaking necessary; its presence however helps jump-start basic geometric comparisons that simplifies especially the energy comparisons to be taken in the next subsection. 

\begin{prop}\label{prop:pointwiseconBAinf}
	The assumptions \eqref{eq:ass:bainf} imply
	\begin{gather*}
		|\pwWf_0 \phi| \lesssim \delta_0 \tau^{-1}; \\
		|\algdo_0^{1,0}\phi| \lesssim \delta_0 (y^0)^{-1/2} \tau^\gamma;\\
		|\algdo_1^{1,1}\phi| \lesssim \delta_0 (y^0)^{-1/2} \tau^{\gamma -1}.
	\end{gather*}
	And hence
	\[
		|\smtf| \lesssim 1.
	\]
\end{prop}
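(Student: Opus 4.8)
The plan is to unpack the definitions of the schematic weight classes and feed in the pointwise bounds \eqref{eq:ass:bainf}. The only structural inputs needed are that on $\mathcal{I}^+$ with $\tau\geq 2$ one has $|y^i/y^0|\leq 1$ for all $i$ and $1/y^0 = 1/(\tau\cosh\rho)\leq 1/\tau\leq 1/2$, so that every element of $\sweight_*$ is uniformly bounded and, crucially, every factor of $1/y^0$ it carries supplies a gain of $\tau^{-1}$. First I would dispatch $\pwWf_0\phi$: a representative of $\pwWf_0$ is supported in a slab $u\in[a,b]$ and is bounded there by $(1+\ub u)^0 = 1$; on the part of this slab inside $\mathcal{I}^+$ one has $u\in(0,b]$ and $2u\ub u = \tau^2 + |\hat x|^2\geq\tau^2$, hence $\ub u\gtrsim\tau^2$ and $y^0 = (u+\ub u)/\sqrt 2\gtrsim\tau^2$, so that $(y^0)^{-1/2}\lesssim\tau^{-1}$ (with implicit constant depending on the support of $\bgf''$); combined with $|\phi|\leq\delta_0(y^0)^{-1/2}$ this gives $|\pwWf_0\phi|\lesssim\delta_0\tau^{-1}$ on the slab and zero elsewhere.

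Next I would handle the two differential-operator classes by reading off Definition \ref{defn:algdiff:ind}. An element of $\algdo_0^{1,0}$ is a finite sum of terms $f\phi$ or $fL^i\phi$ ($i=1,\dots,d$) with $f\in\sweight_*$ bounded; since $|\phi|,|L^1\phi|\leq\delta_0(y^0)^{-1/2}$ and $|L^i\phi|\leq\delta_0(y^0)^{-1/2}\tau^\gamma$ for $i\geq 2$, this yields $|\algdo_0^{1,0}\phi|\lesssim\delta_0(y^0)^{-1/2}\tau^\gamma$. For $\algdo_1^{1,1}$, the requirement that the weight be $\geq 1$ forces every summand to be of the form $fT\phi$ (the $T$ furnishing the unit of weight), or $fL^i\phi$ with $f$ divisible by $1/y^0$, or $f\phi$ with $f$ divisible by $1/y^0$; using $1/y^0\leq\tau^{-1}$ together with \eqref{eq:ass:bainf}, the first two cases are $\lesssim\delta_0(y^0)^{-1/2}\tau^{\gamma-1}$ and the last is $\lesssim\delta_0(y^0)^{-1/2}\tau^{-1}$, so $|\algdo_1^{1,1}\phi|\lesssim\delta_0(y^0)^{-1/2}\tau^{\gamma-1}$ as claimed.

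For the final assertion I would note that $\smtf = \smtf(\phi\pwWf_0,\algdo_0^{1,0}\phi,\algdo_1^{1,1}\phi)$ is a fixed smooth function of its arguments, so it suffices to bound those arguments uniformly. Since $y^0\geq\tau\geq 2$ and $\gamma<1/4$, the three estimates above give $|\phi\pwWf_0| + |\algdo_0^{1,0}\phi| + |\algdo_1^{1,1}\phi|\lesssim\delta_0\tau^{\gamma-1/2}\lesssim\delta_0$; choosing $\delta_0$ small enough — which is precisely the point at which $\delta_0$ gets fixed — confines the arguments to a compact neighbourhood of the origin on which $\smtf$, and in particular $|\gmetr|$ and $|\gmetr|^{-1}$, is bounded, giving $|\smtf|\lesssim 1$. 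I do not expect any genuine obstacle here: the only thing requiring care is the bookkeeping that converts the algebraic grading on $\algdiff_*^{*,*}$ into decay — namely that each unit of weight, realised either as a factor of $T$ or as a factor of $1/y^0$, is worth one power of $\tau^{-1}$ — which is exactly what upgrades the $L^i\phi$ bound into the claimed $\tau^{\gamma-1}$ rate for $\algdo_1^{1,1}\phi$.
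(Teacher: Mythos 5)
Your proof is correct and follows essentially the same route as the paper's: the $\pwWf_0\phi$ bound via compact $u$-support forcing $\ub u\gtrsim\tau^2$ hence $y^0\gtrsim\tau^2$, the $\algdo$-class bounds via $y^0\geq\tau$ and the weight grading, and the $\smtf$ bound via uniform smallness of its three arguments once $\gamma<1/2$. You simply spell out in full the bookkeeping that the paper compresses into the word "trivial," which is a fair and accurate unpacking of Definitions \ref{defn:sweight}, \ref{defn:algdiff:ind}, and Notation \ref{notn:algdo}.
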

\begin{proof}
	The estimates on $|\algdo_0^{1,0}\phi|$ and $|\algdo_1^{1,1}\phi|$ are trivial using the assumptions, together with the fact that $y^0 \geq \tau$ by definition. The estimate on $|\pwWf_0\phi|$ follows from the bootstrap assumption and the estimate \eqref{eq:y0=tau^2} in Lemma \ref{lem:decayonsupport}.  
	Finally, as $\gamma < 1/2$ by assumption, we see that the three $\algdo_0^{1,0}$, $\algdo_1^{1,1}$, and $\pwWf_0 \phi$ all have global uniform bounds, therefore we must also have global uniform bounds on the arbitrary smooth functions $\smtf$. 
\end{proof}

\begin{prop}[Geometric consequences] \label{prop:geoconBAinf}
	The assumptions \eqref{eq:ass:bainf} implies, when $\delta_0$ is sufficiently small, that
	\begin{enumerate}
		\item The Jacobian determinant $\frac12 \leq |\gmetr| \leq 2$. 
		\item The hyperboloids $\Sigma_\tau$ are space-like relative to $\gmetr$; in fact $\gmetr^{-1}(\D\tau, \D\tau) = -1 + O(\delta_0 \tau^{-5/2})$.
		\item The volume forms $\dvol_{\sigmmetr_\tau}$ and $\dvol_{\siggmetr_\tau}$ are uniformly comparable.
		\item The quantity $\mathfrak{c}_{TT}$ from \eqref{eq:def:ctt} is comparable to $\tau^2 / (y^0)^2$. 
	\end{enumerate}
\end{prop}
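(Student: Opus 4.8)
The plan is to read off all four items from the explicit formulas \eqref{eq:dynmet}, \eqref{eq:InvMet1}, \eqref{eq:InvMet} for $\gmetr$ and $\gmetr^{-1}$, combined with the pointwise bounds established in the preceding proposition, namely $|\pwWf_0\phi|\lesssim\delta_0\tau^{-1}$ and $|\algdo_1^{1,1}\phi|\lesssim\delta_0(y^0)^{-1/2}\tau^{\gamma-1}$ (the latter covering $|T\phi|$), together with the assumption \eqref{eq:ass:bainf} for $|L^i\phi|$. The mechanism throughout is the same: since $\gamma<1/4$ and $y^0\geq\tau$ on $\mathcal{I}^+$, any such quantity squared, or multiplied by a $\pwWf_0\phi$ factor, is $O(\delta_0\tau^{-5/2})$, and hence $\leq 1/2$ once $\delta_0$ is small and $\tau\geq 2$.

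For (1) I would use that $|\gmetr|-1 = \gmetrf^{-1}(\nabla\phi,\nabla\phi) = (\algdo_1^{1,1}\phi)^2(1+\pwWf_0\phi)$ (Notation \ref{notn:algdo}), which is $\lesssim\delta_0^2(y^0)^{-1}\tau^{2\gamma-2}\lesssim\delta_0^2\tau^{-5/2}$. For (2) I would expand, via \eqref{eq:InvMet} and \eqref{eq:InvMet1},
\[
	\gmetr^{-1}(\D\tau,\D\tau) = \mink^{-1}(\D\tau,\D\tau) + 2\phi\bgf''(\partial_{\ub{u}}\tau)^2 - \frac{1}{|\gmetr|}\bigl[\gmetrf^{-1}(\D\phi,\D\tau)\bigr]^2 .
\]
The first term equals exactly $-1$, since $-(\D\tau)^\sharp$ is the $\mink$-unit normal \eqref{eq:minknormsig}; the second is $2\phi\bgf''(u/\tau)^2 = O(\delta_0\tau^{-3})$ because $u$ is bounded on $\mathrm{supp}\,\bgf''$; and for the third I would rewrite $-(\D\tau)^\sharp = \frac{\tau}{y^0}T + \frac{1}{\tau y^0}\sum_i y^iL^i$ (which follows from \eqref{eq:minknormsig} and $L^i = y^0\partial_{y^i}+y^i\partial_{y^0}$), so that $|\mink^{-1}(\D\phi,\D\tau)| = |(\D\tau)^\sharp\phi|\lesssim\delta_0(y^0)^{-1/2}\tau^{\gamma-1}$ using $|y^i|\leq y^0$ and the $L^\infty$ bounds, while the $\gmetrf^{-1}$ correction $2\phi\bgf''(\partial_{\ub{u}}\phi)(u/\tau)$ is smaller still; squaring yields $O(\delta_0^2\tau^{-5/2})$. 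Hence $\gmetr^{-1}(\D\tau,\D\tau) = -1+O(\delta_0\tau^{-5/2})$, which is strictly negative for small $\delta_0$, so $\D\tau$ is $\gmetr$-timelike, $\Sigma_\tau$ is $\gmetr$-spacelike, and $\siggmetr_\tau$ is Riemannian.

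For (3) I would invoke the co-area identity $\dvol_{\siggmetr_\tau} = \sqrt{|\gmetr^{-1}(\D\tau,\D\tau)|}\,\sqrt{|\gmetr|}\,\dvol_{\sigmmetr_\tau}$, obtained by comparing $\dvol_\gmetr = |\gmetr^{-1}(\D\tau,\D\tau)|^{-1/2}\,\D\tau\wedge\dvol_{\siggmetr_\tau}$ with its Minkowski analogue, in which $|\mink^{-1}(\D\tau,\D\tau)| = 1$ and $\dvol_\gmetr = \sqrt{|\gmetr|}\,\dvol_\mink$; by (1)--(2) both factors on the right lie in $[2^{-1/2},2^{1/2}]$, giving the comparability. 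For (4), according to the definition \eqref{eq:def:ctt} of $\mathfrak{c}_{TT}$ as the coefficient of $TT$ extracted from $\Box_\gmetr$ (the decomposition of Remark \ref{rmk:TTphi}), substituting $\partial_{y^i} = \frac{1}{y^0}L^i-\frac{y^i}{y^0}\partial_{y^0}$ into $\gmetr^{\mu\nu}\partial_{y^\mu}\partial_{y^\nu}$ collects the $TT$ part into $\gmetr^{-1}(v,v)$ with $v = \D y^0-\sum_i\frac{y^i}{y^0}\D y^i = \frac{\tau}{y^0}\D\tau$; thus $\mathfrak{c}_{TT}$ is, up to sign, $\frac{\tau^2}{(y^0)^2}\gmetr^{-1}(\D\tau,\D\tau)$, and (2) gives $\mathfrak{c}_{TT} = \frac{\tau^2}{(y^0)^2}\bigl(1+O(\delta_0\tau^{-5/2})\bigr)$, comparable to $\tau^2/(y^0)^2$.

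The argument is a chain of perturbative estimates with no deep obstruction. The one step that needs genuine care is the third term of the display in (2): there is no Lorentzian Cauchy--Schwarz available for $\gmetrf^{-1}(\D\phi,\D\tau)$, so one must pass to the $\{T,L^i\}$ frame and then check that the $T\phi$ contribution --- which carries only the weak weight $\tau/y^0$ coming from the $\mink$-normal, rather than a full power of $(y^0)^{-1}$ --- still decays faster than $\tau^{-5/4}$; this holds precisely because $y^0\geq\tau$ and $\gamma<1/4$. A secondary bookkeeping point is matching the computation in (4) to the exact form of \eqref{eq:def:ctt}.
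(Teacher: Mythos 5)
Your proof is correct and for items (1) and (2) follows essentially the same path as the paper: you expand $\gmetr^{-1}(\D\tau,\D\tau)$ via \eqref{eq:InvMet}, recognize the Minkowski piece as exactly $-1$, bound the $\phi\bgf''(\partial_{\ub{u}}\tau)^2$ piece by $O(\delta_0\tau^{-3})$ using the compact $u$-support, and estimate $\gmetrf^{-1}(\D\tau,\D\phi)$ by passing to the $\{T,L^i\}$ frame via $-(\D\tau)^\sharp=\frac{\tau}{y^0}T+\frac{1}{\tau y^0}\sum y^iL^i$. (In fact your pointwise bound $|\gmetrf^{-1}(\D\tau,\D\phi)|\lesssim\delta_0(y^0)^{-1/2}\tau^{\gamma-1}$ is the careful one; the paper writes $(y^0)^{-3/2}\tau^\gamma$ at this intermediate step, which does not follow from \eqref{eq:ass:bainf} alone, but both lead to the same $O(\delta_0\tau^{-5/2})$ after squaring because $y^0\geq\tau$ and $\gamma<1/4$.)

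For items (3) and (4) you take a genuinely different, and arguably cleaner, route. For (3) the paper compares the two unit normals, showing $(\D\tau)^{\gmetr\sharp}-(\D\tau)^\sharp$ has small coefficients in the coordinate basis and then passing through interior products; you instead write the co-area identity $\dvol_{\siggmetr_\tau}=\sqrt{|\gmetr^{-1}(\D\tau,\D\tau)|}\sqrt{|\gmetr|}\,\dvol_{\sigmmetr_\tau}$ once and for all, after which (1) and (2) give the comparability directly. This is shorter and avoids the coefficient-by-coefficient estimate of the normal vector. For (4) the paper simply reads off from the schematic definition \eqref{eq:def:ctt}: every bracketed term beyond the $1$ is manifestly $O(\delta_0)$ under \eqref{eq:ass:bainf} and the preceding proposition. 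Your approach instead identifies $\mathfrak{c}_{TT}$ geometrically with (a $\smtf$-multiple of) $\frac{\tau^2}{(y^0)^2}\gmetr^{-1}(\D\tau,\D\tau)$, then invokes (2). This is a nice observation that ties (4) to (2), but be aware that because the paper's $\mathfrak{c}_{TT}$ is defined schematically from the equation $\Box_\mink\phi+2\bgf''\partial_{\ub{u}}(\phi\phi_{\ub{u}})-\frac{1}{2|\gmetr|}\gmetrf(\D\phi,\D|\gmetr|)=\bgf''(\phi_{\ub{u}})^2$ (i.e.\ a constant-times-$\sqrt{|\gmetr|}$ multiple of $\Box_\gmetr\phi=\cdots$), the exact prefactor is $\gmetr^{-1}(v,v)$ up to a bounded $|\gmetr|$-power, not literally equal to it; you flag this yourself, and since (1) makes these factors lie in $[1/\sqrt2,\sqrt2]$ the conclusion "comparable to $\tau^2/(y^0)^2$'' is unaffected. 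Overall a correct proof with a more geometric take on the last two items.
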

\begin{proof}
	The first claim follows from the fact that 
	\[ |\gmetr| = 1 + \gmetrf^{-1}(\D\phi,\D\phi) = 1 + \smtf (\algdo_1^{1,1}\phi)^2.\]
	For the second claim it suffices to prove bounds on $\gmetr^{-1}(\D\tau,\D\tau)$. 
	From \eqref{eq:InvMet} we have that 
	\[ \gmetr^{-1}(\D\tau, \D\tau) = \underbrace{\mink^{-1}(\D\tau,\D\tau)}_{=-1} + 2 \phi \bgf'' (\partial_{\ub{u}}\tau)^2 - \frac{1}{|\gmetr|} (\gmetrf^{-1}(\D\tau, \D\phi))^2.\]
	By definition 
	\[ \partial_{\ub{u}}\tau = \frac{u}{\tau} \]
	and since $\bgf''$ has compact support in $u$ the middle term $\lesssim \delta_0 \tau^{-3}$. For the final term we have schematically
	\begin{align*}
		\gmetrf^{-1}(\D\tau, \D\phi) &= \mink^{-1}(\D\tau, \D\phi) + \phi \bgf'' \partial_{\ub{u}} \tau \partial_{\ub{u}}\phi\\
					     &= \frac{\tau}{y^0} T\phi + \sum_i \frac{y^i}{y^0\tau} L^i\phi + \phi \bgf'' \frac{u}{\tau} \frac{1}{y^0} (L^1 \phi + T\phi)
	\end{align*}
	and so we see $|\gmetrf^{-1}(\D\tau, \D\phi)| \lesssim \delta_0 \tau^{\gamma} (y^0)^{-3/2}$. This implies that the final term decays at least as fast as $(\delta_0)^2\tau^{2\gamma}(y^0)^{-3}$ and hence for sufficiently small $\delta_0$ we have the desired bounds. 

	For the third claim we first examine \eqref{eq:geonormsig}, as the induced volume form on $\Sigma_\tau$ is given by the interior product of the space-time volume form with the unit normal. By the explicit form of $g$ and the pointwise bounds of Proposition \ref{prop:geoconBAinf}, it suffices that $(\D\tau)^\sharp - (\D\tau)^{\gmetr\sharp} / \sqrt{|\gmetr(\D\tau,\D\tau)|}$ is bounded when measured by $\mink$. Due to the above bound on $\gmetr(\D\tau, \D\tau)$, it suffices to control 
	\[ (\D\tau)^{\gmetr\sharp} - (\D\tau)^\sharp = 2\frac{u}{\tau} \phi \bgf'' \partial_{\ub{u}} - \gmetrf^{-1}(\D\phi,\D\tau) \partial^{\gmetr\sharp}\phi. \]
	In terms of the coordinate basis $\partial_{y^\mu}$, the coefficients of the right hand side can be worked out to be bounded by 
	\[ \delta_0\tau^{-2} + (\delta_0)^2 \tau^{2\gamma -1} (y^0)^{-2}.\] 
	This implies the desired conclusion. 

	The fourth and final claim follows immediately from the definition of \eqref{eq:def:ctt}.  
\end{proof}

For conducting the estimates, we will frequently need to swap between the quantities $\Energy_\tau[\psi;\mink]^2$, $\Energy_\tau[\psi;\gmetr]^2$, and 
\[ \int_{\Sigma_\tau} \frac{1}{\tau^2\cosh(\rho)} \sum |L^i\psi|^2 + \frac{1}{\cosh(\rho)} |T\psi|^2 ~\dvol_{\siggmetr_{\tau}}.\]
These three quantities turns out to be comparable if we assume \eqref{eq:ass:bainf} holds with $\delta_0$ sufficiently small. 

\begin{prop}[Energy comparison] \label{prop:enercomp}
	Assuming \eqref{eq:ass:bainf} holds with $\delta_0$, the three energy-type quantities above are compatible. 
\end{prop}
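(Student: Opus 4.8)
The plan is to prove mutual comparability by routing everything through the Minkowski energy $\Energy_\tau[\psi;\mink]^2$. One of the two required comparisons is essentially free: by the explicit formula \eqref{eq:def:minkener}, the third quantity differs from $\Energy_\tau[\psi;\mink]^2$ only in that the integration is carried out against $\dvol_{\siggmetr_\tau}$ rather than $\dvol_{\sigmmetr_\tau}$, and these two volume forms are uniformly comparable by Proposition \ref{prop:geoconBAinf}(3). So the substance of the proposition is the comparison of $\Energy_\tau[\psi;\gmetr]^2$ with $\Energy_\tau[\psi;\mink]^2$.

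For that comparison I would first use Proposition \ref{prop:geoconBAinf} to dispose of the cosmetic discrepancies: part (2) gives $\sqrt{|\gmetr(\D\tau,\D\tau)|} = 1 + O(\delta_0\tau^{-5/2})$, and part (3) gives $\dvol_{\siggmetr_\tau}\approx\dvol_{\sigmmetr_\tau}$ again. It then suffices to compare the \emph{pointwise densities} $\seten[\psi;\gmetr](T,(-\D\tau)^{\gmetr\sharp})$ and $\seten[\psi;\mink](T,-(\D\tau)^\sharp)$. I would expand $\gmetr = \mink + \D\phi\otimes\D\phi - 2\phi\bgf''\,\D u\otimes\D u$ from \eqref{eq:dynmet}, $\gmetr^{-1} = \mink^{-1} + 2\phi\bgf''\,\partial_{\ub{u}}\otimes\partial_{\ub{u}} - |\gmetr|^{-1}\partial^{\gmetrf\sharp}\phi\otimes\partial^{\gmetrf\sharp}\phi$ from \eqref{eq:InvMet1} and \eqref{eq:InvMet}, and $(-\D\tau)^{\gmetr\sharp} = -(\D\tau)^\sharp + E$ from \eqref{eq:geonormsig}, where $E$ is exactly the error vector estimated in the proof of Proposition \ref{prop:geoconBAinf}(3). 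Substituting, $\seten[\psi;\gmetr](T,(-\D\tau)^{\gmetr\sharp})$ becomes $\seten[\psi;\mink](T,-(\D\tau)^\sharp)$ plus a finite sum of error terms, each a pointwise product of a coefficient built from $\phi$, $\partial\phi$, $\bgf''$ and a quadratic expression in $\partial\psi$.

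The main work, and the main obstacle, is to bound each of these error terms pointwise by $o_{\delta_0}(1)$ times the Minkowski density $\seten[\psi;\mink](T,-(\D\tau)^\sharp) = \tfrac12\big[(\tau^2\cosh\rho)^{-1}\sum(L^i\psi)^2 + (\cosh\rho)^{-1}(T\psi)^2\big]$ (Proposition \ref{prop:sedecomp}), being careful that this density is \emph{anisotropic} ($\cosh\rho = y^0/\tau$). The structural inputs are: (i) the vestigial null structure \eqref{eq:hiddennull}, namely that on $\mathrm{supp}\,\bgf''$ the variable $u$ is bounded, so $\partial_{\ub{u}}\psi = \swtf_1\,(y^0)^{-1}(L^1 + T)\psi$ carries the good weight $(y^0)^{-1}$, whence every appearance of the non-decaying factor $\bgf''$ is compensated by extra $(y^0)^{-1}$ decay; (ii) the bootstrap bounds \eqref{eq:ass:bainf}, which give $|\phi\bgf''|\lesssim\delta_0\tau^{-1}$ together with smallness of $|\partial\phi|$ and of $|\gmetrf^{-1}(\D\phi,\D\psi)|$; (iii) the support bound $\tau^{-2}\lesssim(y^0)^{-1}$ of Remark \ref{rmk:suppcondagain}; and (iv) the elementary identities $T=\partial_{y^0}$ and $\partial_{y^\mu}=(y^0)^{-1}(L^\mu - y^\mu T)$ for $\mu\geq1$, which yield $|\mink(\D\psi,\D\psi)| \lesssim (T\psi)^2 + (y^0)^{-2}\sum(L^i\psi)^2 \lesssim (y^0/\tau)\,\seten[\psi;\mink](T,-(\D\tau)^\sharp)$. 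Chasing the weights $\tfrac{1}{\cosh\rho} = \tau/y^0$ and $\tfrac{1}{\tau^2\cosh\rho} = \tfrac{1}{\tau y^0}$ through each error term, and using $y^0\lesssim\tau^2$, one checks each is subordinate for $\delta_0$ small. Since $T$ is $\gmetr$-timelike and future-directed under \eqref{eq:ass:bainf} (indeed $\gmetr(T,T) = -1 + (T\phi)^2 - \phi\bgf'' \approx -1$), both densities are nonnegative by the dominant energy condition for the scalar stress-energy, so subordinateness of the errors upgrades to a genuine two-sided bound $\seten[\psi;\gmetr](T,(-\D\tau)^{\gmetr\sharp}) = (1+o_{\delta_0}(1))\,\seten[\psi;\mink](T,-(\D\tau)^\sharp)$. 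Integrating this against $\dvol_{\siggmetr_\tau}$ and combining with the normalization and volume-form comparisons gives $\Energy_\tau[\psi;\gmetr]^2\approx\Energy_\tau[\psi;\mink]^2$, which together with the free comparison from the first paragraph finishes the proof. The only genuinely delicate point is (i): organizing the bookkeeping so that the non-decay of $\bgf''$ is \emph{always} absorbed by the good $\partial_{\ub{u}}$-weight; the remainder is routine weight-chasing.
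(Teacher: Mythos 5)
Your proposal follows essentially the same route as the paper's proof: both reduce to a pointwise comparison of the two energy densities via Proposition~\ref{prop:geoconBAinf}, decompose the difference into error terms, and bound each error by a small multiple of the anisotropic Minkowski density $\seten[\psi;\mink](T,-(\D\tau)^\sharp)$ using \eqref{eq:ass:bainf}, the support bound $y^0\lesssim\tau^2$ from Remark~\ref{rmk:suppcondagain}, and the good $\partial_{\ub{u}}$ weight coming from the localization of $\bgf''$. The only difference is cosmetic: the paper organizes the density difference via the closed-form identity $T\psi[(\D\tau)^{\gmetr\sharp}-(\D\tau)^{\mink\sharp}]\psi - \tfrac12 T\tau(\gmetr-\mink)^{-1}(\D\psi,\D\psi)$ and expands $\gmetrf^{-1}(\D\phi,\D\psi)$ exactly (exposing the anomalous $\tau^2/(y^0)^2$ coefficient on $T\phi\,T\psi$), whereas your step~(iv) uses the cruder coordinate-level bound that loses a factor of $\cosh\rho$; this still closes because the bootstrap assumption on $|T\phi|$ already carries the compensating $\tau^{-1}$, but the sharper structure is worth noting.
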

\begin{proof}
	By Proposition \ref{prop:geoconBAinf}, it suffices to compare the term $\seten[\psi;\gmetr](T, (-\D\tau)^{\gmetr\sharp})$ with $\seten[\psi;\mink](T,(-\D\tau)^{\mink\sharp})$. We note first that their difference is given by 
	\[  T\psi [ (\D\tau)^{\gmetr\sharp} - (\D\tau)^{\mink\sharp}]\psi - \frac12 T(\tau) (\gmetr - \mink)^{-1}(\D\psi,\D\psi).\]
	We can expand this to be schematically 
	\begin{multline*}
		T\psi \left[ \phi \pwWf_0 \frac{u}{\tau} \swtf_1 (L^1\psi + T\psi) + \smtf \gmetrf^{-1}(\D\phi,\D\psi) \gmetrf^{-1}(\D\phi, \D\tau) \right] \\
		+ \frac{y^0}{\tau} \left[\phi \pwWf_0 \swtf_2 (L^1 \psi + T\psi)^2 + \smtf (\gmetrf^{-1}(\D\phi,\D\psi))^2 \right].
	\end{multline*}
	Hence we can bound the expression by, using \eqref{eq:ass:bainf} and Proposition \ref{prop:geoconBAinf}, 
	\begin{multline}\label{eq:pf:encom1}
		\lesssim \frac{\delta_0}{\tau^2 y^0} \pwWf_0 | T\psi (L^1\psi + T\psi)| + \frac{\delta_0 \tau^\gamma}{(y^0)^{3/2}}|T\psi \gmetrf^{-1}(\D\phi, \D\psi)| \\
		+ \frac{\delta_0 y^0}{\tau^2} \pwWf_0 \swtf_2 (L^1 \psi + T\psi)^2 + \frac{y^0}{\tau} (\gmetrf^{-1}(\D\phi,\D\psi))^2.
	\end{multline}
	The first term in \eqref{eq:pf:encom1} can be bounded by 
	\[ \lesssim \frac{\delta_0}{\tau^2} \frac{1}{\cosh(\rho)} |T\psi|^2 + \frac{\delta_0}{\tau^2} \frac{1}{\tau^2 \cosh(\rho)} |L^1\psi|^2\]
	and the third term by 
	\[ \lesssim \frac{\delta_0}{\tau} \frac{1}{\tau^2 \cosh(\rho)} |L^1\psi|^2 + \frac{\delta_0}{\tau^3} \frac{1}{\cosh(\rho)} |T\psi|^2.\]
	Both are bounded obviously by a small multiple of $\seten[\psi;\mink](T,(-\D\tau)^{\mink\sharp})$. 
	We can evaluate
	\begin{multline*}
		\gmetrf^{-1}(\D\phi,\D\psi) = -\frac{\tau^2}{(y^0)^2} T\phi T\psi + \algdo_1^{1,0}\phi T\psi + \algdo_1^{1,0}\psi T\phi \\+ \algdo_1^{1,0}\phi \algdo_1^{1,0}\psi + \phi \pwWf_0 \swtf_2 (L^1\phi + T\phi)(L^1\psi + T\psi).
	\end{multline*}
	This implies
	\[ |\gmetrf^{-1}(\D\phi,\D\psi)| \lesssim \frac{\delta_0 \tau^{\gamma}}{(y^0)^{3/2}} |T\psi| + \frac{\delta_0 \tau^\gamma}{(y^0)^{3/2} \tau} |\algdo_0^{1,0}\psi|. \]
	Thus the second term in \eqref{eq:pf:encom1} can be bounded by 
\[ \lesssim \frac{(\delta_0)^2}{\tau^{3-2\gamma}\cosh(\rho)^2} \left( \frac{1}{\cosh(\rho)} |T\psi|^2 + \frac{1}{\tau^2\cosh(\rho)} \sum |L^i \psi|^2\right),\]
	and the fourth term by 
	\[ \lesssim \frac{(\delta_0)^2}{\tau^{3 - 2\gamma}\cosh(\rho)} \left( \frac{1}{\cosh(\rho)} |T\psi|^2 + \frac{1}{\tau^2 \cosh(\rho)} \sum |L^i \psi|^2 \right).\]
	Both terms are similarly controlled by a small multiple of $\seten[\psi;\mink](T,(-\D\tau)^{\mink\sharp})$. This implies our proposition. 
\end{proof}

In Section \ref{sect:inhom} below where we treat the inhomogeneous terms, we frequently need to estimate weighted $L^2$ integrals along $\Sigma_\tau$. We can compare such integrals to the energies by the following Corollary, which follows after noting $y^0 = \tau\cosh(\rho)$.
\begin{cor}\label{cor:basic:l2}
	We have the following bounds for $L^2$ integrals of derivatives of $\phi$:
	\begin{gather*}
		\|(y^0\tau)^{-1/2}\algdo_0^{k,0}\phi\|_{L^2(\Sigma_\tau)} \lesssim \mathfrak{E}_{k-1}(\tau), \\
		\|(y^0\tau)^{-1/2}\algdo_0^{k,0} L^1\phi\|_{L^2(\Sigma_\tau)} \lesssim \mathfrak{F}_{k}(\tau), \\
		\|(y_0)^{-1/2}\tau^{1/2} \algdo_1^{k+1,1} \phi \|_{L^2(\Sigma_\tau)} \lesssim \mathfrak{E}_{k}(\tau), \\
		\|(y^0)^{-1/2}\tau^{1/2} \algdo_1^{k+1,1} L^1\phi \|_{L^2(\Sigma_\tau)} \lesssim \mathfrak{F}_{k+1}(\tau). 
	\end{gather*}
\end{cor}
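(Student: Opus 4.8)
The plan is to reduce all four inequalities to the explicit Minkowski energy \eqref{eq:def:minkener} and then read off the bounds from the pointwise stress-energy identity of Proposition \ref{prop:sedecomp} together with Hardy's inequality. First I would record two elementary simplifications valid on $\mathcal{I}^+ \cap \{\tau \geq 2\}$: every element of $\sweight_*$ is uniformly bounded there (the $y^i/y^0$ are bounded by $1$, and $1/y^0 = (\tau\cosh\rho)^{-1} \leq \tfrac12$), and the weight prefactors in the statement collapse to $(y^0\tau)^{-1} = \tau^{-2}(\cosh\rho)^{-1}$ and $(y^0)^{-1}\tau = (\cosh\rho)^{-1}$ since $y^0 = \tau\cosh\rho$. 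By Proposition \ref{prop:geoconBAinf} the volume forms $\dvol_{\siggmetr_\tau}$ and $\dvol_{\sigmmetr_\tau}$ are uniformly comparable, and by Proposition \ref{prop:enercomp} the energy $\Energy_\sigma[\,\cdot\,;\gmetr]$ is uniformly comparable to $\Energy_\sigma[\,\cdot\,;\mink]$, so it suffices to prove each bound with the left-hand $L^2$ norm taken over $\dvol_{\sigmmetr_\tau}$ and the right-hand side replaced by the corresponding Minkowski energy $\sup_{\sigma\in[2,\tau]}\Energy_\sigma[\,\cdot\,;\mink]$.

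Next I would unwind the schematic operators using the algebra of Section \ref{sect:wvfalg}. An element of $\algdiff_0^{k,0}$ has $T$-degree $0$, so modulo the bounded $\sweight_*$ coefficients it is a finite sum of $L^\alpha$ with $|\alpha| \leq k$; an element of $\algdiff_1^{k+1,1}$ has $T$-degree at most $1$ and weight at least $1$, so after commuting the (at most one) $T$ to the outside—which by Propositions \ref{prop:commalg} and \ref{prop:diffalg} stays within the same weight and $T$-degree class while producing only extra $1/y^0$ factors—it is schematically $\sum_{|\alpha|\leq k}(\text{bdd})\cdot TL^\alpha\phi + \sum_{|\alpha|\leq k+1}\tfrac{1}{y^0}(\text{bdd})\cdot L^\alpha\phi$. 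Substituting the weight identifications above turns each of the four left-hand sides into $\int_{\Sigma_\tau}$ of a sum of terms of the two model shapes $\tfrac{1}{\tau^2\cosh\rho}|L^\alpha\psi|^2$ and $\tfrac{1}{\cosh\rho}|TL^\alpha\psi|^2$, where $\psi$ is $\phi$ (bounds one and three) or $L^1\phi$ (bounds two and four), with $|\alpha|$ up to $k$ in the $T$-terms and up to $k$ (resp.\ $k+1$) in the pure-$L$ terms; here I use $\tfrac{1}{\cosh\rho}\tfrac{1}{(y^0)^2} \leq \tfrac{1}{\tau^2\cosh\rho}$ to dispose of the $1/y^0$ factors.

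Then I would close the estimate termwise. For a term $\tfrac{1}{\cosh\rho}|TL^\alpha\psi|^2$, or $\tfrac{1}{\tau^2\cosh\rho}|L^iL^\beta\psi|^2$ with $L^\alpha = L^iL^\beta$ and $|\beta|=|\alpha|-1$, Proposition \ref{prop:sedecomp} bounds the integrand by $2\,\seten[L^\alpha\psi;\mink](T,-(\D\tau)^\sharp)$ resp.\ $2\,\seten[L^\beta\psi;\mink](T,-(\D\tau)^\sharp)$, so integrating over $\Sigma_\tau$ and invoking \eqref{eq:def:eng1} gives $\Energy_\tau[L^{\leq k-1}\psi;\mink]^2$—the key point being that the peeled-off $L^i$ acting on an order-$(k-1)$ derivative already sits inside the energy at order $k-1$ with exactly the weight $\tfrac{1}{\tau^2\cosh\rho}$. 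This is $\lesssim \mathfrak{E}_{k-1}(\tau)^2$ for $\psi=\phi$ and $\lesssim \mathfrak{F}_{k-1}(\tau)^2 \leq \mathfrak{F}_k(\tau)^2$ for $\psi=L^1\phi$, and analogously with $k+1$ in place of $k-1$ for the third and fourth bounds. The only leftover terms are the undifferentiated ones $\tfrac{1}{\tau^2\cosh\rho}|\psi|^2$; for these I invoke Hardy's inequality (Lemma \ref{lem:hardy}, valid since $d=3\geq 3$) to bound them by $\tfrac{4}{(d-2)^2}\tfrac{1}{\tau^2\cosh\rho}\sum_i(L^i\psi)^2$ and then fall back into the previous case, landing in $\Energy_\tau[\psi;\mink]^2\lesssim \mathfrak{E}_0(\tau)^2$ (resp.\ $\mathfrak{F}_0(\tau)^2$). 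Summing the finitely many terms and using monotonicity of the sups $\mathfrak{E}_k,\mathfrak{F}_k$ in $k$ and $\tau$ yields the four inequalities.

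The argument is essentially bookkeeping, so there is no single serious obstacle; the one place that needs care is matching weights and derivative counts in the schematic operator algebra—in particular, checking that $\algdo_0^{k,0}\phi$ genuinely carries no transversal $T$ derivative (so that each differentiated term is of the form $L^i$ applied to an $L^{\leq k-1}$ derivative and absorbs into an order-$(k-1)$ energy with the correct $\tfrac{1}{\tau^2\cosh\rho}$ weight), and that the weight-$\geq 1$, $T$-degree-$\leq 1$ structure of $\algdo_1^{k+1,1}$ is precisely what converts the prefactor $(y^0)^{-1/2}\tau^{1/2}$ into the $(\cosh\rho)^{-1/2}$ appearing in \eqref{eq:def:minkener}, with the stray $1/y^0$ factors only helping.
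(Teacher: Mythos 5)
Your argument is correct and is essentially the paper's own (implicit) proof, which is just the one-line remark preceding the Corollary that $y^0 = \tau\cosh\rho$: one substitutes this to identify the left-hand weights with those appearing in the explicit Minkowski energy \eqref{eq:def:minkener}, unwinds $\algdo_0^{k,0}$ and $\algdo_1^{k+1,1}$ modulo bounded $\sweight_*$ coefficients and the commutator relations of Proposition \ref{prop:commalg}, absorbs each resulting term into the appropriate-order energy via Proposition \ref{prop:sedecomp} (and Hardy for the undifferentiated pieces), and finally swaps between $\Energy[\,\cdot\,;\mink]$ and $\Energy[\,\cdot\,;\gmetr]$ via Proposition \ref{prop:enercomp}. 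You have simply spelled out the bookkeeping the paper leaves to the reader.
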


\subsection{Improved \texorpdfstring{$L^\infty$}{L-infty} bounds from \texorpdfstring{(\ref*{eq:ass:ba2})}{(BA2)}}

As a consequence of the energy comparison Proposition \ref{prop:enercomp}, we can apply Proposition \ref{prop:peeling} with $d = 3$ to \eqref{eq:ass:ba2} and derive the following $L^\infty$ estimates of $\phi$ and its derivatives. 
\begin{gather}\label{eq:linfty:est}
	\left\{\quad
	\begin{gathered}
		|\phi| + |L^1\phi| \lesssim \frac{\delta}{(y^0)^{1/2}}, \\
		|\algdo_0^{1,0} \phi| + |\algdo_0^{1,0} L^1\phi| \lesssim \frac{\delta \tau^\gamma}{(y^0)^{1/2}}, \\
		|T\phi| + |T L^1\phi| \lesssim \frac{\delta \tau^{\gamma}}{(y^0)^{1/2} \tau}, \\
		|\algdo_0^{2,0} \phi| \lesssim \frac{\delta \tau^{1+\gamma}}{(y^0)^{1/2}}, \\
		|\algdo_1^{2,1} \phi| \lesssim \frac{\delta \tau^{\gamma}}{(y^0)^{1/2}}, \\
		|\algdo_1^{3,1} \phi| \lesssim \frac{\delta \tau^{1+\gamma}}{(y^0)^{1/2}}.
	\end{gathered}
	\right.
\end{gather}

With the aid of \eqref{eq:TTcomp}, we can also estimate 
\begin{gather}\label{eq:linfty:est2}
	\left\{\quad
	\begin{gathered}
		|TT\phi | \lesssim \delta \tau^{\gamma-1}, \\
		|\algdo_0^{1,0} TT\phi| \lesssim \delta \tau^{\gamma}.
	\end{gathered}
	\right.
\end{gather}
Here we also made use of Lemma \ref{lem:decayonsupport} freely. 

\begin{rmk}
	Note that we have estimated \eqref{eq:linfty:est2} by directly estimating the right hand side of \eqref{eq:TTcomp} using \eqref{eq:linfty:est}. In particular these were \emph{not} derived from applying Proposition \ref{prop:peeling} to appropriate energy integrals: in fact we have not yet proven any $L^2$ estimates for $TT\phi$ and its higher derivatives. It turns out the necessary $L^2$ estimates require a little bit of work, and we defer their proofs to Lemma \ref{lem:TT:energest}.
\end{rmk}

\begin{rmk}
	Notice that \eqref{eq:linfty:est} and \eqref{eq:linfty:est} controls up to two derivatives of $\phi$ in all directions, and in particular controls the first derivative of the dynamical metric $\gmetr$. 
	Thus we can apply the blow-up criterion for quasilinear wave equations and assert that the \textit{a priori} estimates guaranteed by our bootstrap argument suffices to prove global existence of the solution. 
\end{rmk}

\begin{rmk}
	In the bootstrap argument we will be studying energies to the top order $\mathfrak{E}_4$ and $\mathfrak{F}_3$, which corresponds to 3 additional derivatives applied to the equations \eqref{eq:geomeqLiphi} and \eqref{eq:geomeqL1phi} respectively. Examining the terms that show up in the nonlinearities, which depend only on up-to-two derivatives of $\phi$, this means that when performing energy estimates the highest derivative that we will put into $L^\infty$ would be three; and as we will only be commuting with $\algdo_0^{1,0}$ derivatives, there will be no $TTT\phi$ terms to worry about. Hence between \eqref{eq:linfty:est} and \eqref{eq:linfty:est2} all possible $L^\infty$ terms are captured. 
\end{rmk}

\subsection{Controlling the deformation tensor term}\label{sect:deftenTerm}

Now let us return to studying the first integrand in \eqref{eq:engineq:restate} as promised. 
First, using Proposition \ref{prop:geoconBAinf}, the space-time integral with regards to $\dvol_\gmetr$ can be replaced by the integral with regards to $\dvol_\mink$ to which we can apply the co-area formula and decompose as $\dvol_{\sigmmetr_\tau}~\D\tau$.
The same proposition also implies we can replace the hypersurface volume element and have the integral conducted with respect to $\dvol_{\siggmetr_\tau}~\D\tau$. 

For the integration along $\Sigma_\tau$, we will put $\psi$, which is automatically \emph{top order}, in the appropriate weighted $L^2$ space; by Proposition \ref{prop:enercomp} these $L^2$ integrals can be bounded by the quasilinear energies. 
We therefore obtain the following bound
\begin{multline}
	\iint_{\tau\in [\tau_0,\tau_1]} |\seten[\psi;\gmetr] :_\gmetr \lieD_T\gmetr | ~\dvol_g \leq \int_{\tau_0}^{\tau_1} \Bigl\| \frac{1}{\cosh(\rho)} \pwWf_0 (\phi + T\phi) \Bigr\|_{L^\infty(\Sigma_\tau)} \Energy_\tau[\psi;\gmetr]^2 \\
	 + \Bigl\| \cosh(\rho) \smtf \bigl[ (\algdo_1^{1,1}\phi)(\algdo_2^{2,2}\phi) + (\algdo_1^{1,1}\phi)^2(1 + \pwWf_0 T\phi) \bigr]\Bigr\|_{L^\infty(\Sigma_\tau)} \Energy_\tau[\psi;\gmetr]^2 ~\D\tau.
\end{multline}
The terms in $L^\infty$ can be estimated with the help of \eqref{eq:linfty:est} and \eqref{eq:linfty:est2}. 
First we have
\[ \bigl|\frac{1}{\cosh(\rho)} \pwWf_0 (\phi + T\phi) \bigr| \lesssim \frac{1}{\tau} \bigl( \frac{\delta}{\tau} + \frac{\delta \tau^\gamma}{\tau^2}\bigr) \leq \delta \tau^{-2}; \]
we used here that  $\frac{1}{\cosh(\rho)} \approx \tau^{-1}$ by Lemma \ref{lem:decayonsupport}. 
Next we have
\[ |\cosh(\rho) \smtf \algdo_1^{1,1}\phi \algdo_2^{2,2}\phi| \lesssim \frac{y^0}{\tau} \frac{\delta \tau^\gamma}{\sqrt{y^0}\tau} \frac{\delta \tau^{\gamma}}{\tau} \leq \delta^2 \tau^{2\gamma - 2} \]
after observing Lemma \ref{lem:decayonsupport} again. 
Finally the last term
\[ |\cosh(\rho) \smtf (\algdo_1^{1,1}\phi)^2 (1 + \pwWf_0 T\phi) | \lesssim \frac{y^0}{\tau} \frac{\delta^2 \tau^{2\gamma}}{y^0 \tau^2} \bigl( 1 +  \frac{\delta \tau^\gamma}{\sqrt{y^0}\tau}\bigr) \lesssim \delta^2 \tau^{2\gamma - 3}. \]
Hence, with our assumption that $\gamma < 1/4$ we have that 
\begin{equation}\label{eq:deftenTermEst}
	\iint_{\tau\in [\tau_0,\tau_1]} |\seten[\psi;\gmetr] :_\gmetr \lieD_T\gmetr | ~\dvol_g \lesssim \int_{\tau_0}^{\tau_1} \delta \tau^{-3/2} \Energy_\tau[\psi;\gmetr]^2 ~\D\tau. 
\end{equation}
Note the integrable power in $\tau$: the deformation tensor term does not cause any difficulty in the analysis.

\section{Controlling the inhomogeneity}
\label{sect:inhom}

In this section we focus our attention on estimating the second term in the energy estimate \eqref{eq:engineq:restate}, given by the integral
\[ 
	\iint_{\tau\in [\tau_0, \tau_1]} \left| \Box_\gmetr \psi\cdot T\psi\right| \dvol_\gmetr.
\]
By virtue of the geometric comparison Proposition \ref{prop:geoconBAinf} and the energy comparison Proposition \ref{prop:enercomp}, we can bound this by
\[ 
	\int_{\tau_0}^{\tau_1} \Bigl\| \sqrt{\frac{y^0}{\tau}} \Box_\gmetr \psi\Bigr\|_{L^2(\Sigma_\tau)} \Energy[\psi;\gmetr]~\D\tau.
\]
We will take $\psi$ here one of $\{ \phi, L^1\phi, L^\alpha L^1\phi, L^i\phi, L^\alpha L^i\phi\}$, where $\alpha$ is some multi-index with length no more than 3, and $i \in \{2,3\}$. 

To streamline our control for the higher derivative terms, we observe the following principle: 
\begin{gather}\label{eq:schemeprincip}
	\Bigl\| \sqrt{\frac{y^0}{\tau}} (\mathrm{expr}) \Bigr\|_{L^2(\Sigma_\tau)} \lesssim \tau^{\nu} 
	\implies \Bigl\| \sqrt{\frac{y^0}{\tau}} \algdo_0^{1,0} (\mathrm{expr}) \Bigr\|_{L^2(\Sigma_\tau)} \lesssim \tau^{\nu+1}. \tag{SP}
\end{gather}
Here, $(\mathrm{expr})$ means some polynomial expressions in $\smtf, \pwWf_*, \swtf_*$, and $\algdo_*^{*,1}\phi$. 
We emphasize that \eqref{eq:schemeprincip} is a principle meta to our proof, where we will bound each term in the polynomial expression either in some weighted $L^2$ space on $\Sigma_\tau$ or in $L^\infty$, using the bootstrap assumptions \eqref{eq:ass:ba2} and their consequences \eqref{eq:linfty:est} and \eqref{eq:linfty:est2}. 
The symbol ``$\lesssim$'' in \eqref{eq:schemeprincip} should be understood to mean ``can be proven as the result of our bootstrap argument to be bounded by'', and not a factual assertion of a possibly better bound. 

Understood this way, \eqref{eq:schemeprincip} follows simply from the facts that:
\begin{itemize}
	\item For $\algdo_*^{*,1}\phi$ terms, in \eqref{eq:ass:ba2}, each higher derivative brings at most an additional loss of $\tau$.
	\item The terms $\swtf_*$ are invariant under action by $L$-derivatives. 
	\item As discussed after Notation \ref{notn:PfWf}, $\algdo_0^{1,0}\pwWf_m = \pwWf_{m+1}$, which allows it to grow with an additional factor of $\ub{u}^{1/2}$. By Lemma \ref{lem:decayonsupport} this can be bounded by $\tau$. 
	\item Finally, observe that 
		\[ \algdo_0^{1,0}\smtf = \smtf \cdot \bigl[ \algdo_0^{1,0}(\phi \pwWf_0) + \algdo_0^{2,0}\phi + \algdo_1^{2,1}\phi\bigr]
		\]
		by the chain rule. The first and third terms are in fact decaying by \eqref{eq:linfty:est}, and the middle term is bounded by $\delta \tau^{1/2 + \gamma}$, which, since $\gamma < 1/4$, is less than a full order of $\tau$ increase in growth. 
\end{itemize}

Occasionally $\algdo_*^{*,2}\phi$ terms also occur: these are the terms with two $T$ derivatives. Their $L^\infty$ estimates are already captured in \eqref{eq:linfty:est2} and they can be seen to also obey the schematic principle \eqref{eq:schemeprincip} where higher derivatives lose factors of $\tau$. 

We complement the estimates with the following $L^2$ version:
\begin{lem}\label{lem:TT:energest}
	For $0 \leq k \leq 3$, we have 
	\[ \| \tau^{5/2} (y^0)^{-3/2} \algdo_0^{k,0} TT\phi \|_{L^2(\Sigma_\tau)} \lesssim \delta \tau^{\max(k-1,0) + \gamma}.\]
\end{lem}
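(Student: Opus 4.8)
The proof will run entirely off the formula \eqref{eq:TTcomp} derived in Appendix \ref{sect:app:ps}, which solves the equation of motion \eqref{eq:geomeqphiS} for the purely transversal second derivative. The structural input is that, when $\Box_\gmetr$ is written in the frame $\{T,L^1,\dots,L^d\}$, the coefficient of $TT$ is $-\mathfrak c_{TT}$ (cf.\ \eqref{eq:def:ctt}), and every \emph{other} second-order term carries two factors of $(y^0)^{-1}$: schematically the Minkowski part is $-\frac{\tau^2}{(y^0)^2}TT+\frac{\swtf_0}{(y^0)^2}L^iL^j+\frac{\swtf_0\,y^i}{(y^0)^2}TL^j+\frac{\swtf_0}{y^0}(T+L^i)$, and the $\phi$-dependent corrections $\Box_\gmetr-\Box_\mink$ from \eqref{eq:boxgexpr} are smaller still. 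Thus \eqref{eq:TTcomp} has the form
\[
	TT\phi=\mathfrak c_{TT}^{-1}\Bigl[-\smtf\,\pwWf_0\,\swtf_2(L^1\phi+T\phi)^2+\frac{\swtf_0}{(y^0)^2}\algdo_0^{2,0}\phi+\frac{\swtf_0\,y^i}{(y^0)^2}\algdo_1^{2,1}\phi+\frac{\swtf_0}{y^0}\algdo_1^{1,1}\phi+(\text{cubic}+)\Bigr],
\]
where by Proposition \ref{prop:geoconBAinf}(2)--(4) one has $\mathfrak c_{TT}=-\gmetr^{-1}(\D\tau,\D\tau)/(T\tau)^2\approx\tau^2/(y^0)^2$, so $\mathfrak c_{TT}^{-1}\approx(y^0/\tau)^2$, and (using $L^i\tau=0$, $L^i y^0=y^i$, $L^i\swtf_0\in\sweight_0$) the $L$-derivatives of $\mathfrak c_{TT}^{-1}$ are again $\lesssim\mathfrak c_{TT}^{-1}$ up to bounded factors. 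First I will multiply through by $\mathfrak c_{TT}^{-1}\approx(y^0/\tau)^2$: the second-order coefficients become $\frac{\swtf_0}{\tau^2}$ on $\algdo_0^{2,0}\phi$ and $\frac{\swtf_0\,y^i}{\tau^2}$ on $\algdo_1^{2,1}\phi$, the first-order one becomes $\lesssim\frac{y^0}{\tau^2}$, and — crucially, since $\swtf_2\in\sweight_2$ so $|\swtf_2|\lesssim(y^0)^{-2}$ — the semilinear coefficient becomes $\frac{(y^0)^2}{\tau^2}\swtf_2\lesssim\tau^{-2}$ on $\mathrm{supp}\,\pwWf_0$.

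Next I will apply $\algdo_0^{k,0}$ and expand by Leibniz, invoking the meta-principle \eqref{eq:schemeprincip} to absorb the commutator (lower-order) contributions and the action of $\algdo_0^{k,0}$ on the $\swtf_*$, $\tau^{-2}$ and $\mathfrak c_{TT}^{-1}$ factors (each costing nothing, using $L^i\tau=0$), so that it remains to bound a handful of model terms. The whole point is that the Lemma's weight $\tau^{5/2}(y^0)^{-3/2}$ matches the anisotropic $L^2$ quantities of Corollary \ref{cor:basic:l2} exactly: since $\tau\le y^0$,
\[
	\tau^{5/2}(y^0)^{-3/2}\cdot\tau^{-2}=\tau^{1/2}(y^0)^{-3/2}\le(\tau y^0)^{-1/2},\qquad \tau^{5/2}(y^0)^{-3/2}\cdot\frac{|y^i|}{\tau^2}\le\tau^{1/2}(y^0)^{-1/2}\quad(\text{using }|y^i|\le y^0),
\]
so the $\algdo_0^{2,0}\phi$-term contributes $\lesssim\|(\tau y^0)^{-1/2}\algdo_0^{k+2,0}\phi\|_{L^2(\Sigma_\tau)}\lesssim\mathfrak E_{k+1}(\tau)$ and the $\algdo_1^{2,1}\phi$-term contributes $\lesssim\|(y^0)^{-1/2}\tau^{1/2}\algdo_1^{k+2,1}\phi\|_{L^2(\Sigma_\tau)}\lesssim\mathfrak E_{k+1}(\tau)$ by Corollary \ref{cor:basic:l2}; this is exactly why $k\le3$ is needed, $\mathfrak E_4$ being the top energy. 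The first-order terms are treated the same way with a derivative to spare. The semilinear term is far better: on $\mathrm{supp}\,\pwWf_0$ one has $y^0\approx\tau^2$ by Remark \ref{rmk:suppcondagain}, so after multiplying by $\tau^{5/2}(y^0)^{-3/2}$ its coefficient is $\lesssim\tau^{-5/2}$; placing one copy of $(L^1\phi+T\phi)$ and its $L$-derivatives in $L^\infty$ via \eqref{eq:linfty:est}--\eqref{eq:linfty:est2} and the other copy in the weighted $L^2$ of Corollary \ref{cor:basic:l2} leaves a large negative power of $\tau$, which comfortably absorbs the at most $\tau^k$ growth picked up when $\algdo_0^{k,0}$ lands on $\pwWf_0$ (turning it into $\pwWf_k\lesssim\tau^k$ on the support). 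The cubic-and-higher terms are handled crudely as elsewhere in Section \ref{sect:inhom}: each is a second-order derivative of $\phi$ times at least one decaying factor $\pwWf_0\phi$ or $\algdo_1^{1,1}\phi$ (bounded using \eqref{eq:ass:bainf}, \eqref{eq:linfty:est}), put in $L^\infty$. Summing and feeding in \eqref{eq:ass:ba2} gives $\|\tau^{5/2}(y^0)^{-3/2}\algdo_0^{k,0}TT\phi\|_{L^2(\Sigma_\tau)}\lesssim\mathfrak E_{k+1}(\tau)\lesssim\delta\tau^{\max(k-1,0)+\gamma}$.

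The step I expect to be the real work is producing \eqref{eq:TTcomp} with the structure used above: one must verify that \emph{every} second-order term other than $TT\phi$ in the frame decomposition of $\Box_\gmetr\phi$ — the Minkowski part \emph{and} the $\phi$-dependent corrections coming from $\gmetr-\mink$ in \eqref{eq:boxgexpr} — genuinely carries two powers of $(y^0)^{-1}$, modulo bounded $\swtf_0$ factors (and, on $\mathrm{supp}\,\pwWf_0$, modulo the compensating bound $y^0\lesssim\tau^2$). It is precisely those two powers that, after division by $\mathfrak c_{TT}\approx\tau^2/(y^0)^2$, turn into the two powers of $\tau^{-1}$ that make the anisotropic energy control of Corollary \ref{cor:basic:l2} applicable; without them the weight $\tau^{5/2}(y^0)^{-3/2}$ would be too heavy. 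Once that bookkeeping (which is the content of Appendix \ref{sect:app:ps}) is settled, the estimate follows by the routine weight-counting of the second paragraph.
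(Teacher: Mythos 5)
Your overall strategy matches the paper's: solve \eqref{eq:geomeqphiS} for $TT\phi$ via \eqref{eq:TTcomp}, use $\mathfrak c_{TT}\approx\tau^2/(y^0)^2$ from Proposition \ref{prop:geoconBAinf}, pair the remaining second-order factor with the anisotropic $L^2$ quantities of Corollary \ref{cor:basic:l2} (arriving at $\mathfrak E_{k+1}$), use $y^0\approx\tau^2$ on $\mathrm{supp}\,\pwWf_0$ for the semilinear term, and estimate higher order terms crudely. However, there is a genuine gap at exactly the step the paper singles out for care. The claim that the action of $\algdo_0^{k,0}$ on the coefficient factors is ``costing nothing'' is not justified by \eqref{eq:schemeprincip} and is false for a generic $\smtf$ factor. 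In the actual \eqref{eq:TTcomp} the leading term is $\smtf\,\algdo_2^{2,1}\phi$ with an $\smtf$ that is not one of your $\swtf_0$'s, and, in general, $\algdo_0^{1,0}\smtf$ contains $\smtf\,\algdo_0^{2,0}\phi$, which by \eqref{eq:linfty:est} is only bounded by $\delta\tau^{1+\gamma}(y^0)^{-1/2}\lesssim\delta\tau^{1/2+\gamma}$ once one leaves the support of $\pwWf_0$. If this loss were present, the $k=1$ bound would come out $\lesssim\delta^2\tau^{1/2+\gamma}$, which does \emph{not} close since the Lemma needs the same exponent $\gamma$ at $k=0$ and $k=1$ (the $\max(k-1,0)$ in the statement is the whole point).

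The paper resolves this by Remark \ref{rmk:nogrowth:TT}: the $\smtf$ factors appearing in \eqref{eq:TTcomp} happen to depend only on $\phi\pwWf_0$ and $\algdo_1^{1,1}\phi$, \emph{not} on $\algdo_0^{1,0}\phi$, so their first derivative contains no $\algdo_0^{2,0}\phi$ term and is uniformly bounded. The paper explicitly flags that this step requires an explicit argument and cannot be delegated to the schematic principle \eqref{eq:schemeprincip}. Your final paragraph does acknowledge that the ``real work'' is verifying the structure of \eqref{eq:TTcomp}, but the property you propose to check --- ``two powers of $(y^0)^{-1}$ modulo bounded $\swtf_0$ factors'' --- is about the weight and misses the crucial dependency structure: the coefficients are $\smtf$-type, and the fact you need is their independence from $\algdo_0^{1,0}\phi$, i.e.\ Remark \ref{rmk:nogrowth:TT}. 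Once that is included, your weight-counting is correct and the argument closes along the same lines as the paper; without it, the $k=1$ (and hence $k\ge1$) case does not close. (A related, milder omission: at $k=3$ the paper separately treats the term where all derivatives land on $\mathfrak c_{TT}$; your assertion that $L$-derivatives of $\mathfrak c_{TT}^{-1}$ ``cost nothing'' needs to be propagated to third order, which the paper does explicitly.)
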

\begin{proof}[Sketch of proof]
	The proof of this estimate itself is an application of the principle \eqref{eq:schemeprincip} and the bootstrap assumptions. Observe first that by \eqref{eq:TTcomp} that $TT\phi$ can be expanded as $1/\mathfrak{c}_{TT}$ times a polynomial expression in $\smtf, \pwWf_*, \swtf_*$, and $\algdo_*^{*,1}\phi$ to which \eqref{eq:schemeprincip} can apply. For convenience call this polynomial expression $\mathcal{O}$. 
	From Corollary \ref{cor:basic:l2} combined with \eqref{eq:linfty:est} we have that
	\begin{align*}
		\| (\tau y^0)^{1/2} \algdo_2^{2,1}\phi \|_{L^2} &\lesssim \mathfrak{E}_1 \\
		\| \tau^{1/2} (y^0)^{-1/2} \algdo_1^{1,1}\phi \|_{L^2} &\lesssim \mathfrak{E}_0 \\
		| (y^0) \pwWf_0 \phi \swtf_1 | & \lesssim \delta \tau^{-1} \\
		| (y^0) \pwWf_0 (\algdo_1^{1,1}\phi)^3 | & \lesssim \delta^3 \tau^{3\gamma - 4}\\
		| (y^0) (\algdo_1^{1,1}\phi)^2 | & \lesssim \delta^2 \tau^{2\gamma - 2} \\
		| (y^0) (1+\pwWf_1 \phi) \swtf_1 (\algdo_1^{1,1}\phi)^2 | & \lesssim \delta^3 \tau^{2\gamma - 4}.
	\end{align*}
	Additionally, we pay attention to the quadratic term
	\[ \| (\tau y^0)^{1/2} \pwWf_0 \swtf_2 (L^1 \phi + T\phi)^2 \|_{L^2} \lesssim \| \pwWf_0 (L^1\phi + T\phi) \|_{L^\infty} \mathfrak{E}_0 \lesssim \delta^2 \tau^{-1}.\]  
	Together with the estimate $\mathfrak{c}_{TT} \approx \tau^2 / (y^0)^2$ implies the Lemma when $k = 0$. Specifically, we have that
	\[ \| (\tau y^0)^{1/2} \mathcal{O} \|_{L^2} \lesssim \mathfrak{E}_1 + \mathfrak{E}_0 \delta \tau^{-1}.\]
	Similar arguments show that 
	\[ \| (\tau y^0)^{1/2} \algdo_0^{1,0}\mathcal{O}\|_{L^2} \lesssim \mathfrak{E}_2 + \delta \mathfrak{E}_1.\]
	For this we crucially need Remark \ref{rmk:nogrowth:TT} which shows that there is no growth arising from first derivatives of $\smtf$ terms in \eqref{eq:TTcomp}.
	(Note that this step requires explicit argument and \emph{not} an appeal to the principle \eqref{eq:schemeprincip}.)
	For higher derivatives we can appeal to \eqref{eq:schemeprincip}. 

	For higher $k$, one also needs to estimate derivatives of $\mathfrak{c}_{TT}$. We observe the following schematic computation
	\begin{multline*}
		\algdo_0^{k,0} \mathfrak{c}_{TT} = \tau^2 \swtf_2 \bigl[1 + \algdo_{0}^{\leq k,0} \bigl( (\algdo_1^{1,1}\phi)^2+ \frac{1}{\tau^2} (\algdo_0^{1,0}\phi)^2 \\ 
		+ \frac{1}{\tau^2} (\algdo_0^{0,0}\phi) \pwWf_0 (1 + (\algdo_1^{1,1}\phi)(L^1\phi + T\phi))\bigr)\bigr] 
	\end{multline*}
	The inner term, operated on by $\algdo_0^{\leq k, 0}$ can be bounded by 
	\[ \delta^2 \tau^{2\gamma - 2} (y^0)^{-1} + \delta \tau^{-3} (1 + \delta^2 \tau^{\gamma-4}) \]
	through \eqref{eq:linfty:est}. By the schematic principle we have that for $k \leq 2$, $\algdo_0^{k,0} \mathfrak{c}_{TT}$ is bounded by $\tau^2 \swtf_2$. And this shows the Lemma up to $k \leq 2$. 

	For $k = 3$, we need to consider the case where all derivatives hits on $\mathfrak{c}_{TT}$, since all other terms follow from the principle \eqref{eq:schemeprincip}. In this case we need to essentially estimate something that is schematically the same as 
	\begin{multline*}
		\Bigl\| \tau^{5/2} (y^0)^{-3/2} TT\phi \cdot \bigl[ (\algdo_1^{1,1}\phi)(\algdo_1^{4,1}\phi) + \\
		\tau^{-2} (\algdo_0^{1,0}\phi) (\algdo_0^{4,0}\phi) + \tau^{-2} \phi \pwWf_0 ( L^1 \phi + \algdo_1^{1,1}\phi) \algdo_1^{4,1}\phi \bigr] \Bigr\|_{L^2}.
	\end{multline*}
	Here we group $\tau^{1/2} (y^0)^{-1/2}$ with the $\algdo_1^{4,1}\phi$ terms, and $\tau^{-1/2} (y^0)^{-1/2}$ with $\algdo_0^{4,0}\phi$, to bound in $L^2$ by $\mathfrak{E}_3$. The remaining parts to be controlled in $L^\infty$ boils down to
	\[  \tau^2 (y^0)^{-1} TT\phi [ \algdo_1^{1,1}\phi + \tau^{-1} (\algdo_0^{1,0}\phi) + \tau^{-2} \phi \pwWf_0 (L^1\phi + \algdo_1^{1,1}\phi)]\]
	which can be bounded by 
	\[ \delta^2 \tau^{1 + \gamma} (y^0)^{-1} [ \tau^{\gamma - 1}(y^0)^{-1/2} + \delta \tau^{-5} ]\lesssim \delta^2 \tau^{2\gamma - 3/2} \] 
	and so we see contributes to a lower order term, and the Lemma holds also for $k = 3$. 
\end{proof}

This previous Lemma implies we can also extend \eqref{eq:schemeprincip} to handle also $\algdo_*^{*,2}\phi$ terms in the expression. 

\begin{rmk}
	Note that \eqref{eq:schemeprincip} gives the \emph{worst case scenario} bound on the higher derivatives of an expression. As one already sees in the proof of the Lemma above, sometimes this worst case bound is not realized. For example, first derivatives of $\smtf$ do no lose a whole factor of $\tau$ even in the worst case, and as seen in the proof of the Lemma above, sometimes derivatives of $\smtf$ do not lose decay at all. Similarly, going from $\phi$ to $\algdo_0^{1,0}\phi$ in $L^\infty$ only entails a $\tau^{\gamma}$ loss. 

	However, overall, the schematic principle \eqref{eq:schemeprincip} cannot be generally improved. This is due to the possible presence of the $\pwWf_*$ terms. Each time a $\algdo_0^{1,0}$ derivative hits $\pwWf_*$ we necessarily incur a penalty of one factor of $\tau$. This entirely agrees with our semilinear analysis in Section \ref{sect:semilinear} where the highest growth rates always accompanies the terms when $\ell_0$ is largest (where most derivatives hit on $\bgf''$).  
\end{rmk}

\subsection{Higher order nonlinear terms}

\begin{prop}\label{prop:HO:bnd}
	The following bounds hold:
	\begin{gather} \| (y^0)^{1/2} \tau^{-1/2} \mathrm{HO}_1 \|_{L^2(\Sigma_\tau)} \lesssim \delta^3 \tau^{3\gamma -4}\\
	\| (y^0)^{1/2} \tau^{-1/2} \mathrm{HO}_i\|_{L^2(\Sigma_\tau)} \lesssim \delta^3 \tau^{2\gamma - 3}.\end{gather}
\end{prop}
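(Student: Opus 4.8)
The objects $\mathrm{HO}_1$ and $\mathrm{HO}_i$ are, by definition, the finite sums of the cubic-and-higher schematic terms on the right-hand sides of \eqref{eq:geomeqL1phi} and \eqref{eq:geomeqLiphi} respectively (and, by Remark \ref{rmk:commterm:simpl}, the cubic-and-higher commutator contributions are schematically of the same shape). Each such term is a product of a single factor $\smtf$, finitely many weight factors $\pwWf_m$ and $\swtf_m$, and \emph{at least three} factors, each of which is $\phi$ or a derivative among $\algdo_0^{1,0}\phi$, $\algdo_1^{1,1}\phi$, $\algdo_1^{2,1}\phi$, $\algdo_2^{2,2}\phi$, $\algdo_1^{1,1}L^1\phi$. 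The plan is to estimate each term separately by the same recipe: place exactly one of the derivative factors in a suitably weighted $L^2(\Sigma_\tau)$ norm --- controlled by Corollary \ref{cor:basic:l2}, or, if that factor carries two $T$-derivatives, by Lemma \ref{lem:TT:energest} --- and bound every remaining factor in $L^\infty(\Sigma_\tau)$ through \eqref{eq:linfty:est} and \eqref{eq:linfty:est2}. Since $\mathrm{HO}_1$ and $\mathrm{HO}_i$ involve no more than two derivatives of $\phi$, the $L^2$ factor is always majorized by one of $\mathfrak{E}_0,\mathfrak{E}_1,\mathfrak{F}_0,\mathfrak{F}_1$, hence by $\lesssim\delta$, so that (each term being genuinely at least cubic) every bound carries at least a $\delta^3$; quartic and higher terms produce further powers of $\delta$ along with extra $\tau$-decay and are strictly better.

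The key input will be the spatial localization of the weight factors. On the support of any $\pwWf_m$ the coordinate $u$ ranges over a fixed compact set, so by \eqref{eq:suppcond} and the interior-cone inequality $2u\ub{u}\geq\tau^2$ one has $\ub{u}\approx\tau^2$, whence $y^0=\tfrac{1}{\sqrt2}(u+\ub{u})\approx\tau^2$ there. Consequently $\pwWf_m\lesssim(1+\ub{u})^{m/2}\approx\tau^m$, but this growth is paid for by the extra $(y^0)^{-1/2}\approx\tau^{-1}$ decay of the accompanying $\phi$-factors; concretely $|\phi\,\pwWf_m|\lesssim\delta(y^0)^{(m-1)/2}$ on that support. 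I will also use the uniform bounds $|\swtf_m|\lesssim(y^0)^{-m}$ on $\mathcal{I}^+$ and $|\smtf|\lesssim1$ (the first Proposition of Section \ref{sect:geoconBA}).

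What remains is power-counting. After choosing the $L^2$ factor and distributing the prefactor $(y^0)^{1/2}\tau^{-1/2}$ (together with the Jacobian weights furnished by Corollary \ref{cor:basic:l2} or Lemma \ref{lem:TT:energest}) among the $L^\infty$ factors, one verifies term by term that the terms of $\mathrm{HO}_1$ are all $\lesssim\delta^3\tau^{3\gamma-4}$, the worst ones being those in which the three derivatives fall on three distinct factors, e.g.\ $\smtf\,\pwWf_0(\algdo_0^{1,0}\phi)(\algdo_1^{1,1}\phi)^2\algdo_2^{2,2}\phi$; for $\mathrm{HO}_i$ the slightly worse stated rate $\delta^3\tau^{2\gamma-3}$ is produced by the extra terms of the form $\smtf\,\phi\,\pwWf_1(\algdo_1^{1,1}\phi)^2\algdo_2^{2,2}\phi$ --- the schematic residue of the quasilinear interaction $\bgf''\,\phi\,\partial^2_{\ub{u}\ub{u}}\phi$ --- in which the $\pwWf_1$ weight, the undifferentiated $\phi$, and a genuine second derivative combine unfavourably. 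I expect the bookkeeping through the full lists of schematic terms in \eqref{eq:geomeqL1phi} and \eqref{eq:geomeqLiphi} to be the main labor; the only genuinely delicate point is that a $\algdo_2^{2,2}\phi$ factor of $TT$-type must be kept in $L^\infty$ (via \eqref{eq:linfty:est2}) rather than routed through $L^2$, since the $L^2$ control of $TT\phi$ from Lemma \ref{lem:TT:energest} is weaker by a factor $\tau^\gamma$ and would just barely spoil the claimed exponents.
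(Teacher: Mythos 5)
Your general framework --- place a single derivative factor in a weighted $L^2(\Sigma_\tau)$ controlled by Corollary~\ref{cor:basic:l2}, bound the remaining factors in $L^\infty$ through \eqref{eq:linfty:est} and \eqref{eq:linfty:est2}, exploit the localization $y^0\approx\tau^2$ on the support of $\pwWf_*$, and note that each term is at least cubic so carries at least $\delta^3$ --- is precisely the paper's, and your dismissal of the quartic-and-higher terms as strictly better is in line with the paper, which disposes of them in a sentence. The problem is the one step you single out as delicate, the $TT\phi$ piece of the cubic term.

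You propose to keep $TT\phi$ in $L^\infty$ via \eqref{eq:linfty:est2}, on the grounds that routing it through Lemma~\ref{lem:TT:energest} would cost a $\tau^\gamma$. You are right that the Lemma carries a $\tau^\gamma$, but your remedy is not what the paper does and does not actually help. The paper instead uses the algebraic identity \eqref{eq:TTcomp} to substitute $TT\phi = \mathfrak{c}_{TT}^{-1}(\smtf\,\algdo_2^{2,1}\phi+\cdots)$, reads off $\mathfrak{c}_{TT}^{-1}\approx (y^0)^2/\tau^2$ as an $L^\infty$ weight from Proposition~\ref{prop:geoconBAinf}, and then places the resulting second derivative $\algdo_2^{2,1}\phi$ in $L^2$ by Corollary~\ref{cor:basic:l2} --- an estimate with \emph{no} $\tau^\gamma$ loss. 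Your alternative does not escape the $\tau^\gamma$: the pointwise bound $|TT\phi|\lesssim\delta\tau^{\gamma-1}$ in \eqref{eq:linfty:est2} is derived from the very same energy hierarchy and carries that $\tau^\gamma$ already; and having spent the $TT\phi$ factor in $L^\infty$, you are now forced to put a \emph{first}-order derivative into $L^2$, which under the weighting of Corollary~\ref{cor:basic:l2} costs roughly a further factor of $\tau$ over the paper's route. Tracked through the power counting, your bound for the cubic $TT$-piece of $\mathrm{HO}_1$ comes out strictly worse than the paper's and fails the stated exponent, so the conversion via \eqref{eq:TTcomp} is a genuine ingredient you are missing, not an optional refinement. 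A secondary slip: you attribute the weaker rate in $\mathrm{HO}_i$ to the quartic term $\smtf\,\phi\,\pwWf_1(\algdo_1^{1,1}\phi)^2\algdo_2^{2,2}\phi$, but the paper's proof identifies the additional \emph{cubic} term $\smtf(\algdo_1^{1,1}\phi)(\algdo_1^{2,1}\phi)(\algdo_2^{2,2}\phi)$ --- where a generic $\algdo_1^{2,1}\phi$ replaces the better $\algdo_1^{1,1}L^1\phi$ --- as what costs the extra power of $\tau$.
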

\begin{proof}
	We focus first on $\smtf (\algdo_1^{1,1}\phi)(\algdo_1^{1,1}L^1\phi)(\algdo_1^{2,2}\phi)$, the sole cubic term in $\mathrm{HO}_1$. Outside the $TT\phi$ term this can be bounded by
	\begin{multline*}
		\| (y^0)^{-1/2} \tau^{-1/2} \smtf(\algdo_1^{1,1}\phi) (\algdo_1^{1,1}L^1\phi)(\algdo_1^{2,1}\phi) \|_{L^2(\Sigma_\tau)} \\
		\lesssim \| \tau^{-1} (\algdo_1^{1,1}\phi)(\algdo_1^{1,1}L^1\phi) \|_{L^\infty} \mathfrak{E}_1(\tau)  \lesssim \delta^3 \tau^{2\gamma - 4}.
	\end{multline*}
	For the $TT\phi$ term we need to add a factor of $\frac{(y^0)}{\tau^2}$: this is because by Proposition \ref{prop:geoconBAinf} we have
	\[
		|\mathfrak{c}_{TT}^{-1} \algdo_2^{2,1} \phi| \lesssim \frac{(y^0)^2}{\tau^2} \frac{1}{y^0} \algdo_1^{2,1}\phi.
	\]
	This gives the bound by
	\[
		\| \tau^{-3} (y^0) (\algdo_1^{1,1}\phi)(\algdo_1^{1,1}L^1\phi) \|_{L^\infty} \delta \lesssim \delta^3 \tau^{2\gamma - 5}
	\]
	where we again used Lemma \ref{lem:decayonsupport}. (The differing decay rates of the two terms stems from the fact that $\algdo_0^{1,0}\phi$ has additional decay along $\Sigma_\tau$ compared to $T\phi$, but this decay is not seen when taking $L^\infty$ on $\Sigma_\tau$.)

	The quartic and higher order terms can be treated similarly, the details of which we omit here, the general idea being to put the highest order derivative terms in $L^2$ and lower ones in $L^\infty$. This shows that the quartic and higher order terms in $\mathrm{HO}_1$ can be bounded by $\lesssim \delta^4 \tau^{3\gamma - 4}$ uniformly. (We remark here that as all the remaining terms are multiplied by a $\pwWf_*$ weight, for their estimates we can consider $(y^0) \approx \tau^2$. This means that the anisotropy between $\algdo_{2}^{2,1}\phi$ terms and $TT\phi$ terms that showed up in the cubic term estimates can be avoided.) 

	For $\mathrm{HO}_i$, the additional cubic term now is a generic $\algdo_1^{2,1}\phi$ instead of $\algdo_1^{1,1}L^1\phi$, which means it decays slower by a factor of $\tau$. The additional quartic terms can all be bounded by $\lesssim \delta^4 \tau^{2\gamma - 3}$, and our claims follow. 
\end{proof}

\begin{rmk}
	To estimate $\algdo_*^{*,1}\phi$ terms using either the energy (and then by the bootstrap \eqref{eq:ass:ba2}) or using a straight-up $L^\infty$ estimate using the peeling estimates in Proposition \ref{prop:peeling}, we would need any factors of $T$ derivative to be the outermost one. Luckily, commutation reduces the order of derivatives and leaves the weight unchanged (see Proposition \ref{prop:diffalg}), which has the advantage of guaranteeing that the commutator terms have faster decay (by $\tau^{-1}$). 
\end{rmk}

\begin{rmk}
	The quartic term bounds for $\mathrm{HO}_1$ can be improved from $\delta^4 \tau^{3\gamma - 4}$ to $\delta^4 \tau^{4\gamma-5}$, thereby upgrading the overall bound on $\mathrm{HO}_1$ to $\delta^3 \tau^{2\gamma - 4}$.
	This improvement comes from noting that the term $\smtf \pwWf_0 (\algdo_0^{1,0}\phi)(\algdo_1^{1,1}\phi)^2 (\algdo_2^{2,2}\phi)$ in the definition of $\mathrm{HO}_1$ is actually $\smtf \pwWf_0 (L^1 \phi) (\algdo_1^{1,1}\phi)^2 (\algdo_2^{2,2}\phi)$. As for our purposes these types of improvements are not essential, and does not effect the closing of the bootstrap, we shall not pursue this and myriad other improvements in the higher order terms. 

	One should however note that for studying the missing case $d = 2$, the above indicates that careful treatment of all quadratic, cubic, \emph{and} quartic nonlinearities will be likely necessary. 
\end{rmk}

\subsection{Quadratic terms}

Now we consider the quadratic nonlinearities. These terms are a bit more delicate and we will include more details of the arguments.  

\subsubsection{Zeroth order case}\label{ssect:qn0}

Looking at \eqref{eq:geomeqphiS}, we need to estimate 
\[ \| (y^0)^{-3/2} \tau^{-1/2} \pwWf_0 (L^1\phi + T\phi)^2\|_{L^2(\Sigma_\tau)}. \]
We observe that
\[ \| (y^0)^{-3/2} \tau^{-1/2} \pwWf_0 (L^1\phi)^2\|_{L^2(\Sigma_\tau)} \lesssim \| (y^0)^{-1} \pwWf_0 L^1\phi \|_{L^\infty} \mathfrak{E}_0(\tau) \lesssim \frac{\delta^2}{\tau^{3}}. \]
Here we used that by \eqref{eq:linfty:est}, our bootstrap assumptions imply $|L^1\phi| \lesssim \delta (y^0)^{-1/2}$. Additionally recall that $y^0 \approx \tau^2$ in the presence of $\pwWf_*$. Next, we have
\[ \| (y^0)^{-3/2} \tau^{-1/2} (T\phi)^2\|_{L^2(\Sigma_\tau)} \lesssim \| (y^0)^{-1} \tau^{-1} T\phi\|_{L^\infty} \mathfrak{E}_0(\tau) \lesssim \frac{\delta^2\tau^{\gamma}}{\tau^{5}}. \]
We note here for this term the $\swtf_2$ term in the nonlinearity is crucial: without it the denominator would only have $\tau^{-1}$ which would not have enabled us to close our estimates.

\subsubsection{First order, $\psi = L^1\phi$}\label{ssect:qn1:1}

Let us now consider $\mathrm{QN}_1$ (see \eqref{eq:def:qn1}). The terms with $(L^1\phi + T\phi)^2$ and $\phi (L^1\phi + T\phi)$ are controlled exactly as the zeroth order term case, by $\delta^2 \tau^{-3}$. For the remaining terms, we see first that
\[
	\| (y^0)^{-3/2} \tau^{-1/2} \pwWf_0 T\phi (L^1 L^1\phi + T L^1\phi) \|_{L^2(\Sigma_\tau)}
	\lesssim \| (y^0)^{-1} T\phi \|_{L^\infty} \mathfrak{F}_0(\tau) \lesssim \frac{\delta^2 \tau^{\gamma}}{\tau^{4}}.
\]
Similarly 
\[ \| (y^0)^{-3/2} \tau^{-1/2} \pwWf_0 (\phi + L^1\phi) (L^1 L^1\phi + T L^1\phi) \|_{L^2(\Sigma_\tau)} \lesssim \frac{\delta^2}{\tau^3}.\]
The final term involves $TT\phi$, for which we can bound
\[ \| (y^0)^{-3/2} \tau^{-1/2} \pwWf_0 (\phi + L^1\phi) TT\phi \|_{L^2(\Sigma_\tau)} \lesssim \| \tau^{-3}(\phi + L^1\phi)\|_{L^\infty} \mathfrak{E}_1(\tau) \lesssim \frac{\delta^2}{\tau^4}.\]

\subsubsection{First order, $\psi = L^i \phi$}\label{ssect:qn1:i}

We next consider $\mathrm{QN}_i$ (see \eqref{eq:def:qni}). There is a loss compared to the $\mathrm{QN}_1$ terms, which we expect. 
First, 
\begin{multline*}
	\| (y^0)^{-1/2}\tau^{-1/2} \pwWf_0 (\phi + L^1\phi + T\phi)(\algdo_1^{1,1}L^1\phi + \algdo_2^{2,2}\phi + \algdo_1^{1,1}\phi) \|_{L^2(\Sigma_\tau)}\\
	\lesssim \| \tau^{-1} \pwWf_0 (\phi + L^1\phi + T\phi)\|_{L^\infty} \cdot [ \mathfrak{F}_0(\tau) + \mathfrak{E}_1(\tau)] \lesssim \frac{\delta^2}{\tau^2}.
\end{multline*}
Next
\begin{multline*}
	\| (y^0)^{-3/2} \tau^{-1/2} (\pwWf_0 L^i\phi + \pwWf_1 \phi)(L^1 L^1 \phi + T L^1\phi + TT\phi + L^1\phi + T\phi)\|_{L^2(\Sigma_\tau)} \\
	\lesssim \|(y^0)^{-1} (\pwWf_0 \algdo_0^{1,0}\phi + \pwWf_1 \phi)\|_{L^\infty} \cdot [ \mathfrak{F}_0(\tau) + \mathfrak{E}_1(\tau) ] \lesssim \frac{\delta^2}{\tau^2}. 
\end{multline*}
Finally, 
\begin{multline*}
	\| (y^0)^{-3/2} \tau^{-1/2} \pwWf_1 (\phi + L^1\phi + T\phi)(L^1\phi + T\phi)\|_{L^2(\Sigma_\tau)} \\
	\lesssim \| (y^0)^{-1} \pwWf_1 (\phi + L^1\phi + T\phi)\|_{L^\infty} \cdot \mathfrak{E}_0(\tau) \lesssim \frac{\delta^2}{\tau^2}
\end{multline*}

\subsubsection{Higher order cases}\label{ssect:qno}

By Remark \ref{rmk:commterm:simpl}, the higher order derivatives $L^\alpha L^i\phi$ where $i = 2,3$ can be treated using \eqref{eq:schemeprincip}. 
It suffices to consider the higher derivatives of $L^1\phi$. 
Observe that the principle \eqref{eq:schemeprincip} can also be applied to the commutator terms: that control of $\algdo_0^{1,0}[\algdo_0^{1,0},\Box_\gmetr]\psi$ also gives control of $[\algdo_0^{1,0}, \Box_\gmetr]\algdo_0^{1,0}\psi$, since the terms of the latter is schematically a subset of those terms that appears in the former. Hence it suffices to consider the estimates for $[\algdo_0^{1,0}, \Box_\gmetr]L^1\phi$. 

We treat each of the six quadratic terms in $[\algdo_0^{1,0}, \Box_\gmetr]L^1\phi$ listed in the schematic decomposition \eqref{eq:commtermlist} below. First, we can estimate
\[
	\| (y^0)^{-1/2}\tau^{-1/2} \pwWf_0 (\algdo_1^{1,1}\phi)(L^1\psi + T\psi)\|_{L^2(\Sigma_\tau)}
	\lesssim \| \pwWf_0 (\algdo_1^{1,1}\phi)\|_{L^\infty} \cdot \mathfrak{F}_0(\tau) \lesssim \frac{\delta^2 \tau^\gamma}{\tau^2}.
\]
Next, we have
\[ 
	\| (y^0)^{-1/2} \tau^{-1/2} \pwWf_0 (\phi + L^1\phi + T\phi) (\algdo_1^{1,1}\psi) \|_{L^2(\Sigma_\tau)} \lesssim
	\| \tau^{-1} \pwWf_0 (\phi + L^1\phi + T\phi) \|_{L^\infty} \cdot \mathfrak{F}_0(\tau) \lesssim \frac{\delta^2}{\tau^2}.
\]
The third term we estimate by 
\begin{multline*}
	\| (y^0)^{-1/2}\tau^{-1/2} \pwWf_0 (\algdo_1^{1,1}\phi) (\algdo_1^{2,1} \psi + TT\psi) \|_{L^2(\Sigma_\tau)} \\
	\lesssim \|\tau^{-1} \pwWf_0 (\algdo_1^{1,1}\phi) \|_{L^\infty} \cdot [ \mathfrak{F}_1(\tau) + \mathfrak{E}_2(\tau)] \lesssim \frac{\delta^2 \tau^{2\gamma}}{\tau^3}.
\end{multline*}
Next we have
\begin{multline*}
	\| (y^0)^{-3/2} \tau^{-1/2} \pwWf_0 (\algdo_0^{1,0} L^1 \phi + \algdo_0^{1,0}T\phi)(L^1\psi + T\psi) \|_{L^2(\Sigma_\tau)} \\
	\lesssim \| (y^0)^{-1} \pwWf_0 (\algdo_0^{1,0} L^1\phi + \algdo_0^{1,0}T\phi)\|_{L^\infty} \cdot \mathfrak{F}_1(\tau) \lesssim \frac{\delta^2 \tau^\gamma}{\tau^3}.
\end{multline*}
The fifth term we estimate by
\begin{multline*}
	\| (y^0)^{-3/2} \tau^{-1/2} \pwWf_1 (\phi + L^1\phi + T\phi)(L^1\psi + T\psi)\|_{L^2(\Sigma_\tau)} \\
	\lesssim \| (y^0)^{-1} \pwWf_1 (\phi + L^1\phi + T\phi)\|_{L^\infty} \cdot \mathfrak{F}_1(\tau) \lesssim \frac{\delta^2}{\tau^2}.
\end{multline*}
And the final term is estimated by
\begin{multline*}
	\| (y^0)^{-3/2} \tau^{-1/2} \pwWf_1 \phi (L^1 L^1\psi + T L^1\psi + TT\psi)\|_{L^2(\Sigma_\tau)} \\
	\lesssim \| (y^0)^{-1} \pwWf_1 \phi\|_{L^\infty} \cdot [ \mathfrak{F}_1(\tau) + \mathfrak{E}_2(\tau)] \lesssim \frac{\delta^2 \tau^\gamma}{\tau^2}.
\end{multline*}

\section{Closing the bootstrap}
\label{sect:closing}

We conclude our proof of Theorem \ref{thm:mainthmQ} by putting together the estimates in the previous sections using \eqref{eq:engineq:restate}. 
By our control of the deformation tensor \eqref{eq:deftenTermEst}, we have that 
\[ \Energy_{\tau_1}[\psi;\gmetr]^2 - \Energy_{\tau_0}[\psi;\gmetr] \lesssim \int_{\tau_0}^{\tau_1} \frac{\delta}{\tau^{3/2}} \Energy_\tau[\psi;\gmetr]^2 + \| (y^0)^{1/2} \tau^{-1/2} \Box_\gmetr \psi\|_{L^2(\Sigma_\tau)} \Energy_\tau[\psi;\gmetr] ~\D\tau.\] 
Now let $\sigma \in (2,T)$. 

From our bootstrap assumptions \eqref{eq:ass:ba2} and the computations of Section \ref{ssect:qn0}, we get 
\begin{equation}
	\mathfrak{E}_0(\sigma)^2 - \mathfrak{E}_0(2)^2 \lesssim \int_{2}^{\sigma} \frac{\delta^3}{\tau^{3/2}} + \frac{\delta^3}{\tau^3} ~\D\tau \lesssim \delta^3.
\end{equation}
From Section \ref{ssect:qn1:1}, we get
\begin{equation}
	\mathfrak{F}_0(\sigma)^2 - \mathfrak{F}_0(2)^2 \lesssim \int_2^{\sigma} \frac{\delta^3}{\tau^{3/2}} + \frac{\delta^3}{\tau^3} ~\D\tau \lesssim \delta^3.
\end{equation}
From Section \ref{ssect:qn1:i}, we get
\begin{equation}
	\mathfrak{E}_1(\sigma)^2 - \mathfrak{E}_1(2)^2 \lesssim \int_2^\sigma \frac{\delta^3}{\tau^{3/2}} + \frac{\delta^3}{\tau^2} ~\D\tau \lesssim \delta^3.
\end{equation}
By applying the principle \eqref{eq:schemeprincip} and factoring in Remark \ref{rmk:commterm:simpl} this implies for $k \geq 2$, 
\begin{multline}
	\mathfrak{E}_k(\sigma)^2 - \mathfrak{E}_k(2)^2 \lesssim \int_2^\sigma \frac{\delta^3 \tau^{2\gamma + 2(k-2)}}{\tau^{3/2}} + \frac{\delta^3\tau^{\gamma + k-2}}{\tau^2} \cdot \tau^{k-1}~ \D\tau \\
	\lesssim \delta^3 \sigma^{2\gamma + 2(k-2) - 1/2} + \delta^3 \sigma^{\gamma + 2(k-2)} \leq \delta^3 \sigma^{2\gamma + 2(k-2)}.
\end{multline}

Finally, from Section \ref{ssect:qno} and principle \eqref{eq:schemeprincip} we get
\begin{equation}
	\mathfrak{F}_1(\sigma)^2 - \mathfrak{F}_1(2)^2 \lesssim \int_2^\sigma \frac{\delta^3}{\tau^{3/2}} + \frac{\delta^3}{\tau^2} + \frac{\delta^3 \tau^\gamma}{\tau^2} ~\D\tau \lesssim \delta^3.
\end{equation}
Further applications of the principle \eqref{eq:schemeprincip} gives us the higher order estimates for $k \geq 2$
\begin{multline}
	\mathfrak{F}_k(\sigma)^2 - \mathfrak{F}_k(2)^2 \lesssim \int_2^{\sigma} \frac{\delta^3 \tau^{2\gamma + 2(k-2)}}{\tau^{3/2}} + \frac{\delta^3\tau^{\gamma + k-2}}{\tau^2} \cdot \tau^{k-1} + \frac{\delta^3 \tau^{2\gamma + k-2}}{\tau^2} \cdot \tau^{k-1} ~\D\tau\\
	\lesssim \delta^3 \sigma^{2\gamma + 2(k-2) - 1/2} + \delta^3 \sigma^{\gamma + 2(k-2)} + \delta^3 \sigma^{2\gamma + 2(k-2)} \leq \delta^3 \sigma^{2\gamma + 2(k-2)}. 
\end{multline}

With these estimates, the bootstrap closes provided $\delta, \epsilon$ are taken sufficiently small. 

We close our discussion with a couple of remarks. 
\begin{rmk}
	One interesting aspect of our argument is that the semilinear nonlinearities seem to allow closing the bootstrap using only a $\log(\tau)$ loss instead of $\tau^\gamma$. This is seemingly in contradiction to the discussion in Section \ref{sect:semilinear}, where $\tau^{\gamma}$ losses seems to be necessary when the dimension $d = 3,4$. The explanation for this is that in our semilinear analyses we did \emph{not} separate out the privileged direction $L^1\phi$ as having better decay properties. Had we also isolated the direction $L^1\phi$ and run the argument with separate energies for generic derivatives and derivatives with at least one $L^1$ vector field, we would find also that it is possible to close the argument with merely a $\log(\tau)$ loss at energy level $d-1$, with a further $\tau$ loss with each additional derivative, analogously to the cases where $d \geq 5$. 

	As discussed at the start of Section \ref{sect:commutedeq}, one would see additional losses for the full quasilinear problem were one not to separate out the better direction $L^1$. 
	This is reflected in the fact that the part where we \emph{required} the $\tau^{\gamma}$ loss in place of a mere log-loss occurs in Section \ref{ssect:qno}, where we considered the effects of the commutator term $[X,\Box_g]\psi$; note of course that the commutator term vanishes for our semilinear model problem. 
\end{rmk}

\begin{rmk}
	One may ask whether the higher energy growth is associated to the \emph{blow-up at infinity} described by Alinhac \cite{Alinha2003}, and which seems generic for wave equations with weak-null quasilinearities \cite{Lindbl2008, LinRod2005,DenPus2018}. This seems \emph{not} to be the case for several reasons. 
	First among the reasons is that we observe the same higher energy growth even for the semilinear model considered in Section \ref{sect:semilinear}. Additionally, our energy growth is not very severe; when translated back to $L^\infty$ estimates of the coordinate derivatives, we still observe decay (though at a reduced rate compared to what is available for the linear wave equation). Finally, examining the leading order correction of the quasilinear metric is given with the coefficients $\phi \bgf'' \D{u} \otimes \D{u}$. The localization by the $\bgf''$ term means that the slowly decaying coefficients are supported away from future time-like infinity. 
	This appears in contrast to the known manifestations of blow-up at infinity where the null structure of the dynamical metric is significantly different from the Minkowskian one near future time-like infinity. 
\end{rmk}

\appendix
\section{Various computations}%
\subsection{Computations supporting Section \ref*{sect:perturbedsystem}}\label{sect:app:ps}

\subsubsection{Verification of (\ref*{eq:boxgexpr})}
\begin{align*}
	\Box_\gmetr \psi &= \frac{1}{\sqrt{|\gmetr|}} \partial_{\mu} \big( \sqrt{|\gmetr|} \gmetr^{\mu\nu} \partial_\nu \psi \big)\\
	& = \frac{1}{\sqrt{|\gmetr|}} \partial_{\mu} \big( \sqrt{|\gmetr|} \gmetrf^{\mu\nu} \partial_\nu \psi \big) - \frac{1}{\sqrt{|\gmetr|}} \partial_{\mu} \big( \frac{1}{\sqrt{|\gmetr|}} \gmetrf^{\mu\nu} \partial_\nu \phi \cdot \gmetrf(\D\phi,\D\psi) \big)\\
	&= \frac1{2|\gmetr|} \gmetrf( \D|\gmetr|, \D\psi) + \Box_\mink \psi + 2 \partial_{\ub{u}} ( \phi \bgf'' \partial_{\ub{u}} \psi) \\
	& \qquad - \frac{1}{|\gmetr|} \cdot \underbrace{\sqrt{|\gmetr|} \partial_{\mu} \big( \frac{1}{\sqrt{|\gmetr|}} \gmetrf^{\mu\nu} \partial_\nu \phi \big)}_{= \bgf''(\phi_{\ub{u}})^2} \cdot \gmetrf(\D\phi,\D\psi) - \frac{1}{|\gmetr|} \gmetrf\big(\D\phi, \D\big( \gmetrf(\D\phi, \D\psi)\big) \big)
\end{align*}

\subsubsection{Verification of (\ref*{eq:geomeqphiX})} We start with 
\[ \lieD_X \Big( \sqrt{|g|} \partial_\mu \frac{\gmetrf^{\mu\nu}\partial_\nu\phi}{\sqrt{|\gmetr|}} \Big) = X\big( \bgf'' (\phi_{\ub{u}})^2\big).\]
Now, the left hand side can be written as
\begin{multline*}
	[X, \Box_\mink] \phi + \boxed{\Box_\mink X\phi} + 2 [X, \partial_{\ub{u}}](\phi \bgf'' \partial_{\ub{u}}\phi) \\
	+ 2 \partial_{\ub{u}}\big( X(\phi \bgf'') \partial_{\ub{u}}\phi\big) + 2 \partial_{\ub{u}} \big( \phi\bgf'' [X,\partial_{\ub{u}}]\phi\big) + \boxed{2 \partial_{\ub{u}} ( \phi \bgf'' \partial_{\ub{u}} X\phi )} \\
	+ \frac{1}{2 |\gmetr|^2} X(|\gmetr|) \gmetrf(\D\phi, \D|\gmetr|) - \frac{1}{2|\gmetr|} \lieD_X(\gmetrf^{-1})(\D\phi, \D|\gmetr|) - \frac{1}{2|\gmetr|} \gmetrf(\D X\phi, \D|\gmetr|) \\
	- \frac{1}{2|\gmetr|} \gmetrf\big( \D\phi, \D\big(\lieD_X(\gmetrf^{-1})(\D\phi, \D\phi) \big)\big)
	- \boxed{\frac{1}{|\gmetr|} \gmetrf\big( \D\phi, \D\gmetrf(\D\phi, \D X\phi)\big)}.
\end{multline*}
Throughout we have used the Leibniz rule for Lie differentiation with respect to tensor contractions, as well as the fact that Lie derivatives commute with exterior differentiation. 
The boxed terms, we notice, are identical to the principal terms in $\Box_\gmetr \psi$ if we set $\psi = X\phi$. 
The formula \eqref{eq:geomeqphiX} follows by rearranging the terms. 

\subsubsection{Control of $TT\phi$ terms}
As the null structure that we require can all be recovered as discussed in Remark \ref{rmk:ncvestige}, for the control of the $TT\phi$ terms in terms of other $\algdo^{2,1}_{*}$ terms we do not need to be too precise with the weights. 
Starting from the equation 
\[ \Box_\mink \phi + 2 \bgf'' \partial_{\ub{u}}(\phi \phi_{\ub{u}}) - \frac{1}{2 |\gmetr|} \gmetrf( \D\phi, \D|\gmetr|) = \bgf''(\phi_{\ub{u}})^2 \]
we first observe
\[ \Box_{\mink} \phi = - \frac{\tau^2}{(y^0)^2} TT \phi - \underbrace{\frac{d}{y^0} T\phi + \frac{1}{(y^0)^2} \sum_{i = 1}^d L^i L^i \phi - y^i L^i T \phi - y^i T L^i \phi}_{= \algdo^{2,1}_2 \phi}.\]
Additionally, the quadratic terms 
\begin{multline*}
	2 \bgf'' \partial_{\ub{u}}(\phi \phi_{\ub{u}}) - \bgf'' (\phi_{\ub{u}})^2 = \pwWf_0 \swtf_2 (L^1 \phi + T \phi)^2 \\
	+\phi \pwWf_0 \swtf_2 (L^1 L^1 \phi + L^1 T \phi + T L^1 \phi + TT \phi + L^1\phi + T \phi ).
\end{multline*}
The cubic and higher order terms are captured schematically by
\begin{multline}
	\gmetrf(\D\phi, \D|\gmetr|) = \swtf_2 [ \tau^2 (\algdo_1^{1,1}\phi)^2 + (\algdo_0^{1,0}\phi)^2 + \phi \pwWf_0 (\algdo_1^{1,1}\phi) (L^1\phi + T\phi)] TT\phi \\
	+ [(\algdo_1^{1,1}\phi)^2 + \phi \pwWf_0 (\algdo_1^{1,1}\phi)^2   ] \algdo_2^{2,1}\phi \\
	+\swtf_1 (\algdo_1^{1,1}\phi)^3 + \pwWf_0 (\algdo_1^{1,1}\phi)^4 + \phi \pwWf_0 (\algdo_1^{1,1}\phi)^3 + \phi \pwWf_1 \swtf_1 (\algdo_1^{1,1}\phi)^3, 
\end{multline}
where we took care to isolate the terms with $TT\phi$ from other second derivatives. 

This means that we can re-write
\begin{multline}\label{eq:TTcomp}
	TT\phi = \frac{1}{\mathfrak{c}_{TT}} \bigl( \smtf \algdo_2^{2,1}\phi + \pwWf_0 \swtf_2 (L^1\phi +T\phi)^2  + \pwWf_0 (\algdo_1^{1,1}\phi)^4 \\
	+ \phi\pwWf_0 \swtf_1 (\algdo_1^{1,1}\phi) + \smtf (\algdo_1^{1,1}\phi)^3 + (1+\phi \pwWf_1) \swtf_1 (\algdo_1^{1,1}\phi)^3 \bigr) 
\end{multline}
where
\begin{multline}\label{eq:def:ctt}
	\mathfrak{c}_{TT} \eqdef \frac{\tau^2}{(y^0)^{2}} \bigl[ 1 + (\algdo_1^{1,1}\phi)^2 + \frac{1}{\tau^{2}}(\algdo_0^{1,0}\phi)^2\\
	+ \frac{1}{\tau^2} (\algdo_{0}^{0,0}\phi) \pwWf_0 (1 + (\algdo_1^{1,1}\phi)(L^1\phi + T\phi)) \bigr].
\end{multline}

\begin{rmk}\label{rmk:nogrowth:TT}
	Note that none of the $\smtf$ factors in \eqref{eq:TTcomp} include any $\algdo_0^{1,0}\phi$ dependence. 
\end{rmk}

\subsubsection{Verification of (\ref*{eq:geomeqL1phi})} \label{appen:HO1}

We note the following very rough estimate for the cubic terms
\[
	\mink^{-1}(\D\psi_1, \D\psi_2) = \algdo_1^{1,1}\psi_1 \algdo_1^{1,1}\psi_2
\]
and hence
\[ 
	\gmetrf(\D\psi_1, \D\psi_2) = \algdo_1^{1,1} \psi_1 \algdo_1^{1,1}\psi_2(1  + \phi \pwWf_0).
\]
We also have that
\begin{equation}
	\lieD_{L^1}(\gmetrf^{-1})(\D\psi_1, \D\psi_2) = \algdo_0^{1,0}\phi \pwWf_0 \algdo_1^{1,1}\psi_1 \algdo_1^{1,1}\psi_2.
\end{equation}
So all the higher-order, non-boxed terms in \eqref{eq:geomeqphiX} can be captured by the sum
\begin{multline}
	\mathrm{HO}_1 \eqdef \smtf (\algdo_1^{1,1}\phi)(\algdo_1^{1,1}L^1\phi) (\algdo_2^{2,2}\phi) + \smtf \pwWf_0 (\algdo_0^{1,0}\phi)(\algdo_1^{1,1}\phi)^2 (\algdo_2^{2,2}\phi)\\
	+ \smtf \pwWf_0 (\algdo_1^{1,1}\phi)^3(\algdo_1^{2,1}\phi) + \smtf\pwWf_0 (\algdo_1^{1,1}\phi)^5(\algdo_2^{2,2}\phi) + \smtf \pwWf_0 (\algdo_0^{1,0}\phi)(\algdo_1^{1,1}\phi)^4\\
	+ \smtf \phi \swtf_1\pwWf_1 (\algdo_1^{1,1}\phi)^2(\algdo_1^{2,1}\phi) +   \smtf \phi^2 \swtf_2\pwWf_2 (\algdo_1^{1,1}\phi)^4(\algdo_1^{2,1}\phi) \\
	+ \smtf \phi \swtf_1\pwWf_1 (\algdo_1^{1,1}\phi)^5(\algdo_2^{2,2}\phi) + \smtf \swtf_1 \pwWf_1 (\algdo_0^{1,0}\phi)(\algdo_1^{1,1}\phi)^3.
\end{multline}
\begin{rmk}\label{app:rmk:hoDN}
	The term $\smtf (\algdo_1^{1,1}\phi)(\algdo_1^{1,1}L^1\phi)(\algdo_2^{2,2}\phi)$ stands out in the expression of $\mathrm{HO}_1$: it is both the only cubic term (all other terms are at least quartic in the unknowns) and the only term that is not explicitly multiplied by a factor of $\pwWf_*$. In fact, this term is the only nonlinearity that would remain when $\bgf \equiv 0$, where the equations reduce to the small-data scenario studied by Lindblad \cite{Lindbl2004}, and the nonlinearity is of the \emph{double null} form $\mink^{-1}(\D\phi,\D(\mink^{-1}(\D\phi,\D\phi)))$. 

	We note that instead of writing $\algdo_1^{2,1}\phi$ we have chosen to write $\algdo_1^{1,1}L^1\phi$. This is deliberate in order to allow us to exploit certain improvements of decay for the $L^1\phi$ derivatives. 
\end{rmk}

We concentrate on the boxed, quadratic terms in \eqref{eq:geomeqphiX} next. 
For these terms we need the additional null structure as seen in \eqref{eq:semiLbd}, and we write, noting that $[L^1,\partial_{\ub{u}}] = -\partial_{\ub{u}}$, the following schematic decompositions for the quadratic terms:
\begin{align*}
	L^1(\bgf'' (\phi_{\ub{u}})^2) &= \pwWf_0 \swtf_2 (L^1\phi + T\phi)(L^1 L^1\phi + T L^1\phi) + \pwWf_0 \swtf_2 (L^1\phi + T\phi)^2,\\
	\partial_{\ub{u}}(\phi \bgf'' \phi_{\ub{u}}) &= \pwWf_0 \swtf_2 (L^1\phi + T\phi)^2 \\
		&+\pwWf_0 \phi \swtf_2 (L^1\phi + T\phi) + \pwWf_0 \phi \swtf_2 (L^1 L^1 \phi + T L^1 \phi + TT\phi),\\
	\partial_{\ub{u}}(L^1(\phi \bgf'') \phi_{\ub{u}}) &= \pwWf_0 (\phi + L^1\phi) \swtf_2 (L^1 \phi + T\phi)\\ 
								  & +\pwWf_0 (\phi + L^1\phi) \swtf_2 (L^1 L^1\phi + TL^1\phi + TT\phi) \\
								  &+ \pwWf_0 \swtf_2 (L^1\phi + T\phi)^2 
								  + \pwWf_0 \swtf_2 (L^1\phi + T\phi) (L^1 L^1\phi + T L^1\phi).
\end{align*}
So we can summarize the quadratic nonlinearities schematically as
\begin{multline}\label{eq:def:qn1}
	\mathrm{QN}_1 = \pwWf_0 \swtf_2 \cdot \Bigl[ (L^1\phi + T\phi)^2 + T\phi(L^1L^1\phi + TL^1\phi) \\
	+ \phi (L^1\phi + T\phi) + (\phi + L^1\phi) (L^1 L^1 \phi + T L^1\phi + T T\phi) \Bigr].
\end{multline}

\subsubsection{Verification of (\ref*{eq:geomeqLiphi})} \label{appen:HOi}

In the case where $X = L^i$ for $i \neq 1$, we have that
\begin{equation}
	\lieD_{L^i}(\gmetrf^{-1})(\D\psi_1, \D\psi_2) = \algdo_0^{1,0}\phi \pwWf_1 \algdo_1^{1,1}\psi_1 \algdo_1^{1,1}\psi_2.
\end{equation}
One can check that the higher-order, non-boxed terms in \eqref{eq:geomeqphiX} are now captured by 
\begin{multline} 
	\mathrm{HO}_i = \mathrm{HO}_1 + \smtf (\algdo_1^{1,1}\phi)(\algdo_1^{2,1}\phi)(\algdo_2^{2,2}\phi) \\
	+  \smtf \phi \pwWf_1 (\algdo_1^{1,1}\phi)^2 (\algdo_2^{2,2}\phi) 
		+ \smtf \phi \swtf_1 \pwWf_2 (\algdo_1^{1,1}\phi)^3 + \smtf \pwWf_1 (\algdo_1^{1,1}\phi)^4.
\end{multline}
The added terms are now the pure cubic term which now can include $L$ derivatives in all directions, and additional quartic terms which arises from $X$ hitting $\bgf''$ which generates a $\pwWf_1$ instead of $\pwWf_0$.  

The quadratic parts of the nonlinearity can also be expanded schematically. The computations are as follows:
\begin{align*}
	L^i(\bgf'' (\phi_{\ub{u}})^2) &= \pwWf_0 \swtf_2 (L^1 \phi + T\phi)(L^i L^1\phi + L^i T\phi) + \pwWf_1 \swtf_2 (L^1\phi + T\phi)^2,\\
	\frac{1}{y^0} (L^i - y^i T) (\phi \bgf'' \phi_{\ub{u}}) &= \pwWf_0 \algdo_1^{1,1}\phi \swtf_1 (L^1\phi + T\phi) + \pwWf_1 \swtf_2 \phi (L^1 \phi + T\phi) \\
								& + \pwWf_0 \phi \swtf_1 (\algdo_1^{1,1}L^1\phi + \algdo_1^{1,1} T\phi),\\
	\partial_{\ub{u}}(L^i(\phi \bgf'') \phi_{\ub{u}}) &= \pwWf_0 \swtf_2(L^1\phi + T\phi)(L^1 L^i\phi + T L^i\phi) + \pwWf_1 \swtf_2 (L^1\phi+T\phi)^2\\
	&+ \pwWf_0 \swtf_2 L^i\phi (L^1 L^1 \phi + T L^1\phi + TT\phi + L^1\phi + T\phi)\\
	&+ \pwWf_1 \swtf_2 \phi (L^1 L^1 \phi + T L^1\phi + T T\phi + L^1\phi + T\phi),\\
	\partial_{\ub{u}}(\phi \bgf'' \frac{1}{y^0}(L^i - y^i T)\phi) &= \pwWf_0 \swtf_1 (L^1\phi + T\phi) \algdo_1^{1,1}\phi + \pwWf_0 \swtf_1 \phi (L^1 \algdo_1^{1,1}\phi + T \algdo_1^{1,1}\phi).
\end{align*}
Thus we can collect the quadratic nonlinearities using the schematic expression
\begin{multline}\label{eq:def:qni}
	\mathrm{QN}_i = \pwWf_0 \swtf_1(\phi + L^1\phi + T \phi)(\algdo_1^{1,1} L^1\phi + \algdo_2^{2,2}\phi + \algdo_1^{1,1}\phi)\\
	+ \swtf_2 (\pwWf_0 L^i\phi + \pwWf_1 \phi)(L^1 L^1\phi + TL^1\phi + TT\phi + L^1\phi + T\phi) \\
	+ \pwWf_1 \swtf_2 (\phi + L^1\phi + T\phi)(L^1\phi + T\phi).
\end{multline}

\subsection{Computations supporting \ref*{sect:commutatorrels}}\label{sect:app:cr}

Observe that, expanding using the standard formula for the Laplace-Beltrami operator and \eqref{eq:InvMet}, we obtain
\begin{multline*}
	\Box_\gmetr \psi = \Box_m\psi + \frac12 \frac{1}{|\gmetr|} \gmetr^{-1}(\D|\gmetr|, \D\psi) + 2\partial_{\ub{u}}(\phi \bgf''\partial_{\ub{u}}\psi) \\
	- \frac{1}{\sqrt{|\gmetr|}} \gmetrf^{-1}(\D\phi, \D(\frac{1}{\sqrt{|\gmetr|}} \gmetrf^{-1}(\D\phi, \D\psi))) - \frac{1}{|\gmetr|} \bgf'' (\phi_{\ub{u}})^2 \gmetrf^{-1}(\D\phi,\D\psi).
\end{multline*}
This implies
\begin{multline*}
	[X, \Box_\gmetr]\psi = \frac12 \gmetr^{-1}(\D(X\ln|\gmetr|), \D\psi) + \frac12 \lieD_X\gmetr^{-1}(\D\ln|\gmetr|, \D\psi) \\
	+ \boxed{2 [X,\partial_{\ub{u}}](\phi \bgf'' \partial_{\ub{u}}\psi)} + \boxed{2 \partial_{\ub{u}}(X(\phi \bgf'') \partial_{\ub{u}}\psi)} + \boxed{2 \partial_{\ub{u}} ( \phi\bgf'' [X, \partial_{\ub{u}}]\psi)} \\
	+ \frac12 |\gmetr|^{-2/3} X(|\gmetr|) \gmetrf^{-1}(\D\phi, \D(\frac{1}{\sqrt{|\gmetr|}} \gmetrf^{-1}(\D\phi,\D\psi))) 
	- \frac{1}{\sqrt{|\gmetr|}} \lieD_X(\gmetrf^{-1})(\D\phi,  \D(\frac{1}{\sqrt{|\gmetr|}} \gmetrf^{-1}(\D\phi,\D\psi)))\\
	- \frac{1}{\sqrt{|\gmetr|}} \gmetrf^{-1}(\D(X\phi),        \D(\frac{1}{\sqrt{|\gmetr|}} \gmetrf^{-1}(\D\phi,\D\psi)))
	+ \frac12 \frac{1}{\sqrt{|\gmetr|}} \gmetrf^{-1}(\D\phi, \D( \frac{X|\gmetr|}{|\gmetr|^{3/2}} \gmetrf^{-1}(\D\phi,\D\psi))) \\
	- \frac{1}{\sqrt{|\gmetr|}} \gmetrf^{-1}(\D\phi, \D(\frac{1}{\sqrt{|\gmetr|}} \lieD_X(\gmetrf^{-1})(\D\phi, \D\psi)))
	- \frac{1}{\sqrt{|\gmetr|}} \gmetrf^{-1}(\D\phi, \D(\frac{1}{\sqrt{|\gmetr|}} \gmetrf^{-1}(\D(X\phi),\D\psi)))\\
	- X(\frac{1}{|\gmetr|} \bgf'' (\phi_{\ub{u}})^2) \gmetrf^{-1}(\D\phi, \D\psi) 
	- \frac{1}{|\gmetr|} \bgf'' (\phi_{\ub{u}})^2 \lieD_X(\gmetrf^{-1})(\D\phi, \D\psi) \\
	- \frac{1}{|\gmetr|} \bgf'' (\phi_{\ub{u}})^2 \gmetrf^{-1}(\D(X\phi), \D\psi).
\end{multline*}
Except for the three boxed terms, which are linear in both $\phi$ and $\psi$, all remaining terms are at least quadratic in $\phi$.

The terms that are quadratic and above in $\phi$ are generally harmless. 
We will use the following rough estimate for the quadratic form $\gmetr^{-1}$:
\[
	\gmetr^{-1}(\D\psi_1, \D\psi_2) =\algdo_1^{1,1}\psi_1 \algdo_1^{1,1}\psi_2 (1 + \phi \pwWf_0 + \smtf (\algdo_1^{1,1}\phi)^2).
\]
For $X = L^i$, where $i= 1, \ldots, 3$, the terms quadratic and above in $\phi$ can be 
schematically captured by the following collection of terms:
\begin{multline}
	(X\smtf)\cdot \bigl[ (\algdo_1^{1,1}\phi)(\algdo_2^{2,2}\phi)(\algdo_1^{1,1}\psi) + (\algdo_1^{1,1}\phi)^2 (\algdo_2^{2,2}\psi) \\
	+ \pwWf_0 (\algdo_1^{1,1}\phi)^3 (\algdo_1^{1,1}\psi) + \pwWf_1 \swtf_1\phi (\algdo_1^{1,1}\phi)^2 (\algdo_1^{1,1}\psi)\bigr]\\
	+ \smtf \cdot \bigl[ (\algdo_1^{2,1}\phi)(\algdo_2^{2,2}\phi)(\algdo_1^{1,1}\psi) + (\algdo_1^{1,1}\phi)(\algdo_2^{3,2}\phi)(\algdo_1^{1,1}\psi) \\
		+ (\algdo_1^{1,1}\phi)(\algdo_1^{2,1}\phi)(\algdo_2^{2,2}\psi) + \pwWf_0 (\algdo_1^{1,1}\phi)^2(\algdo_1^{2,1}\phi) (\algdo_1^{1,1}\psi) \\
		+ \pwWf_1 (\algdo_1^{1,1}\phi)^3 (\algdo_1^{1,1}\psi) + \pwWf_1 \swtf_1\phi (\algdo_1^{1,1}\phi_)(\algdo_1^{2,1}\phi)(\algdo_1^{1,1}\psi) \\
	+ \pwWf_1 \swtf_1 (\algdo_0^{1,0}\phi)(\algdo_1^{1,1}\phi)^2 (\algdo_1^{1,1}\psi) + \pwWf_2 \swtf_1 \phi (\algdo_1^{1,1}\phi)^2 (\algdo_1^{1,1}\phi)\bigr]
\end{multline}
where since $X = L^i$ we have that 
\begin{equation}
	X\smtf = \smtf \cdot \bigl( \algdo_0^{1,0}\phi \pwWf_0 + \phi \pwWf_1 + \algdo_0^{2,0}\phi + \algdo_1^{2,1}\phi \bigr).
\end{equation}
We note, as before, all terms involving $\pwWf_*$ weights are at least cubic in $\phi$. 

The terms that are linear in $\phi$ in the commutator can also be expanded. For higher level commutations we do not need to separate between $L^1$ and $L^i$ for $i \neq 1$. So we can just write $[X,\partial_{\ub{u}}] = \algdo_1^{1,1}$, which allows us to capture the relevant terms by
\begin{align*}
	[X, \partial_{\ub{u}}](\phi \bgf'' \psi_{\ub{u}}) &= \pwWf_0 \algdo_1^{1,1}\phi \swtf_1 (L^1\psi + T\psi) + \pwWf_1 \swtf_2\phi (L^1\psi + T\psi) \\
							   &+ \pwWf_0 \phi \swtf_1 \algdo_1^{1,1}(L^1 \psi + T\psi),\\
	\partial_{\ub{u}} (X(\phi\bgf'')\psi_{\ub{u}}) &= (\pwWf_1 \phi + \pwWf_0 \algdo_0^{1,0}\phi) \swtf_2 (L^1 \psi + T\psi + L^1 L^1\psi + T L^1\psi + TT\psi) \\
						       &+ \pwWf_0 \algdo_1^{1,1}\phi \swtf_1 (L^1\psi + T\psi) + \pwWf_0 \swtf_2 \algdo_0^{1,0}(L^1\phi + T\phi) (L^1\psi + T\psi) \\
						       &+ \pwWf_1 \swtf_2 (L^1 \phi + T\phi) (L^1\psi + T\psi) + \pwWf_1 \swtf_2\phi (L^1\psi + T\psi) ,\\
	\partial_{\ub{u}}(\phi \bgf'' [X,\partial_{\ub{u}}]\psi) &= \pwWf_0 \swtf_1 (L^1\phi + T\phi) \algdo_1^{1,1}\psi + \pwWf_0 \swtf_1 \phi (L^1 \algdo_1^{1,1}\psi + T\algdo_1^{1,1}\psi). 
\end{align*}
Using the commutation relations of Proposition \ref{prop:diffalg} we can summarize these terms by 
\begin{multline}
	\pwWf_0 \swtf_1 (\algdo_1^{1,1}\phi)(L^1\psi + T\psi) + \pwWf_0 \swtf_1 (\phi + L^1\phi + T\phi) (\algdo_1^{1,1}\psi)\\
	+ \pwWf_0 \swtf_1 (\algdo_0^{1,0}\phi) (\algdo_1^{1,1} L^1\psi + \algdo_1^{1,1} T\psi) + \pwWf_0 \swtf_2 (\algdo_0^{1,0} L^1 \phi + \algdo_0^{1,0} T\phi)(L^1\psi + T\psi) \\
	+\pwWf_1 \swtf_2 (\phi + L^1\phi + T\phi) (L^1\psi + T\psi) + \pwWf_1 \swtf_2 \phi (L^1 L^1\psi + T L^1 \psi + T T\psi) 
\end{multline}
which have similar structure to the terms appearing in $\mathrm{QN}_1$ and $\mathrm{QN}_i$ above. 

\section{List of symbols and notations}
\begin{eqlist}
	\item[$\mink$] Minkowski metric with signature $(-1,1,\ldots,1)$.
	\item[$\gmetrf$] Linear part of the dynamical metric; see \eqref{eq:dynmet1} in \S\ref{sect:perturb}.
	\item[$\gmetr$] Dynamical metric; see \eqref{eq:dynmet} in \S\ref{sect:perturb}.
	\item[$\bgf$] Plane-wave background; see \eqref{eq:pwbkgd} in \S\ref{sect:pwbkgd}.
	\longitem[$u, \ub{u}, \hat{x}$] Null coordinates adapted to plane-wave background; see \eqref{eq:changevar} and the discussion after \eqref{eq:pwbkgd2} in \S\ref{sect:pwbkgd}.
	\item[$y^i$] Rectangular coordinates adapted to plane-wave background; see \eqref{eq:ycoord} in \S\ref{sect:wvfalg}.
	\item[$\mathcal{I}^+$] Forward light cone; see start of \S\ref{sect:globsob}.
	\longitem[$\tau,\rho, \Sigma_\tau$] Hyperboloidal foliation and related parameters; see \eqref{eq:def:tau} and Notation \ref{not:def:tau} in \S\ref{sect:globsob}.
	\item[$T, L^i$] Vector fields; see \eqref{eq:def:T}, \eqref{eq:def:L1}, and \eqref{eq:def:Li} in \S\ref{sect:globsob}. 
	\item[$\seten$] Stress-energy tensors; see \eqref{eq:def:seten} in \S\ref{sect:globsob}.
	\item[$\Energy$] Background energy integrals; see \eqref{eq:def:eng1} and \eqref{eq:def:eng2} in \S\ref{sect:basest}. 
	\item[$\sweight_*$] Weight functions; see Definition \ref{defn:sweight} in \S\ref{sect:wvfalg}.
	\item[$\swtf_*$] Elements of $\sweight_*$; see Notation \ref{notn:PfWf} in \S\ref{sect:perturbedsystem}.
	\item[$\pwWf_*$] Plane-wave like weights; see Notation \ref{notn:PfWf} in \S\ref{sect:perturbedsystem}.
	\item[$\algcomm_*$] Weighted commutator algebra; see discussion surrounding Proposition \ref{prop:commalg} in \S\ref{sect:wvfalg}. 
	\item[$\algdiff_*^{*,*}$] Weighted differential operators; see discussion surrounding \eqref{eq:def:algdiff:term} in \S\ref{sect:wvfalg}, as well as Definition \ref{defn:algdiff:ind}. 
	\item[$\algdo_*^{*,*}$] Elements of $\algdiff_*^{*,*}$; see Notation \ref{notn:algdo} in \S\ref{sect:perturbedsystem}.
	\item[$\smtf$] Smooth functions representing bounded terms; see Notation \ref{notn:smtf} in \S\ref{sect:perturbedsystem}. 
\end{eqlist}

\bibliographystyle{amsalpha}
\bibliography{vmcref.bib}

\end{document}